\numberwithin{equation}{section}
\newtheorem{theorem}{Theorem}[section]
\newtheorem{lemma}[theorem]{Lemma}
\newtheorem{proposition}[theorem]{Proposition}
\theoremstyle{remark}
\newtheorem{remark}[theorem]{Remark}
\newtheorem{example}[theorem]{Example}
\newtheoremstyle{rmdefinition}{}{}{\upshape}{}{\bfseries}{.}{ }{}
\theoremstyle{rmdefinition}
\newtheorem{definition}[theorem]{Definition}
\newcommand{\C}{{\mathbb C}}
\newcommand{\be}[1]{\begin{equation}\label{#1}}
\newcommand{\ee}{\end{equation}}
\newcommand{\beqa}{\begin{eqnarray}}
\newcommand{\eeqa}{\end{eqnarray}}
\newcommand{\smthr}[3]{\mathop{\sum}\limits_{
	\mathop{}\limits^{\text{\scriptsize $#1$}}_{
		\mathop{}\limits^{\text{\scriptsize $#2$}}_{
		  \text{\scriptsize $#3$}}}}}
\newcommand{\smtwo}[2]{\mathop{\sum}\limits_{
	\mathop{}\limits^{\text{\scriptsize $#1$}}_{
	  \text{\scriptsize $#2$}}}}
\def\blt{\text{\scriptsize ${\hspace{-1.1pt}}\bullet{\hspace{-1.1pt}}$}}
\def\Blt{\text{\scriptsize $\bullet$}}
\def\TRE{\mathcal{T}}
\def\CET{\text{\rm C.E.T.}}
\def\CCW{\mathfrak{C}}
\def\CCw{\mathfrak{G}}
\def\opr{o}
\newcommand{\OPR}[1]{{#1}^{\opr}}
\newcounter{tmpc}
\newlength{\tmplenght}
\newlength{\tmplenghta}
\newlength{\tmplenghtb}
\newlength{\tmplenghtc}
\newenvironment{LIST}[1]{%
\setlength{\tmplenghta}{#1}
\setlength{\tmplenghtb}{#1}
\setlength{\tmplenghtc}{#1}
\advance\tmplenghtb-5pt
\advance\tmplenghtc 42pt
\setcounter{tmpc}{0}
\begin{list}{{\rm (\alph{tmpc})}}{\usecounter{tmpc}
\setlength{\leftmargin}{\tmplenghta}
\setlength{\rightmargin}{0cm}
\setlength{\itemsep}{1pt}
\setlength{\topsep}{3pt}
\setlength{\labelsep}{5pt}
\setlength{\labelwidth}{\tmplenghtb}
\setlength{\listparindent}{\tmplenghta}}
}{\end{list}}
\newcommand{\mbf}[1]{\ensuremath{\mathchoice
                    {\mbox{\boldmath$\displaystyle\mathbf{\mathit{#1}}$}}
                    {\mbox{\boldmath$\textstyle\mathbf{\mathit{#1}}$}}
                    {\mbox{\boldmath$\scriptstyle\mathbf{\mathit{#1}}$}}
                    {\mbox{\boldmath$\scriptscriptstyle\mathbf{\mathit{#1}}$}}}}
\begin{document}

\title[Curved $A_{\infty}$-algebras and Chern classes]{Curved $A_{\infty}$-algebras and Chern classes}

\author[NIKOLAY M. NiKOLOV\hspace{6pt}]{NIKOLAY M. NiKOLOV${}^{\, 1}$ \hspace{0pt}}
\author[\hspace{7pt}SVETOSLAV ZAHARIEV]{\hspace{7pt}SVETOSLAV ZAHARIEV${}^{\, 2}$}

\maketitle

\begin{center}\small
\parbox{286pt}{
\begin{LIST}{7pt}
\item[${}^1$]
${}$\hspace{-5pt}
INRNE, Bulgarian Academy of Sciences, \\ Tsarigradsko chussee 72 Blvd., Sofia 1784, Bulgaria;\\
mitov$@$inrne.bas.bg
\item[${}^2$]
${}$\hspace{-5pt}
LaGuardia Community College of The City University of New York,\\
MEC Department, 31-10 Thomson Ave.\\
Long Island City, NY 11101, U.S.A.;\\szahariev@lagcc.cuny.edu
\end{LIST}}
\end{center}

\bigskip

\begin{abstract}
We describe two constructions giving rise to curved $A_{\infty}$-algebras. The first consists of deforming $A_{\infty}$-algebras, while the second involves transferring curved dg structures that are deformations of (ordinary) dg structures along chain contractions.
As an application of the second construction, given a vector bundle on a polyhedron $X$, we exhibit a curved $A_{\infty}$-structure on
the complex of matrix-valued cochains of sufficiently fine triangulations of $X$.
We use this structure as a motivation to develop a homotopy associative version of Chern-Weil theory.
\end{abstract}

\medskip
\noindent {\bf Mathematics Subject Classification 2000:} 55U99, 57R20.

\medskip
\noindent {\bf Keywords:} $A_{\infty}$-algebra, Chern-Weil theory.

\section{Introduction}

Developing discrete analogues of all fundamental differential geometric structures is desirable for several reasons. Such discretization techniques provide efficient tools for explicit numerical computations needed in applications. They also lead to simple and transparent proofs of combinatorial analogues of many classical results in differential geometry (see e.g. R. Forman's work on discrete Morse theory \cite{F}) or to new results obtained by passing to the ``continuum limit'', as is the case with constructive quantum  gauge field theory based on lattice approximations (see the survey \cite{J} and the references therein.)

In this paper, we propose notions of connection and curvature on a piece-wise smooth complex vector bundle over a polyhedron based on the notion of a curved $A_{\infty}$-algebra and derive explicit combinatorial formulas for the real Chern classes of the bundle in terms of the curvature. Our starting point is the recent observation of D. Sullivan (\cite[Appendix]{TZ}, see also \cite{CG}) and \cite{DMS}) that the complex of simplicial cochains on a polyhedron possesses a canonical $A_{\infty}$-structure.

We recall that $A_{\infty}$-algebras \cite{S} are a generalization of differential graded (dg) algebras in which the associativity condition is replaced by an infinite sequence of identities involving higher ``multiplications''. Curved $A_{\infty}$-algebras \cite{GJ} (or weak
$A_{\infty}$-algebras in the terminology of \cite{Ke}) are a natural generalization of both  $A_{\infty}$-algebras and curved dg algebras \cite{Po}. The latter are graded associative algebras endowed with a degree 1 derivation whose square equals the commutator with a fixed closed element of degree 2.

Let $E$ be a complex vector bundle over a finite polyhedron $X$. For each piece-wise smooth idempotent $P$ representing the $K$-theory class of $E$  we construct a canonical curved $A_{\infty}$-structure on the space of matrix-valued simplicial cochains on a sufficiently fine subdivision of $X$. This structure is obtained in Section 3 by transferring a curved dg algebra structure on the space of piece-wise smooth matrix-valued differential forms on $X$ using homological perturbation theory; its elements are given by convergent infinite series.

Various algebraic generalizations of the classical Chern-Weil theory have appeared during the last few decades. We mention, among others, the Connes-Karoubi-Chern character in cyclic homology (\cite{Ka},\cite{Co}) in the context of associative algebras and Huebschmann's work \cite{H4} in the context of Lie-Rinehart algebras. We develop a homotopy associative extension of the Chern-Weil construction for curved dg algebras from \cite{Po} in which the input data is a curved $A_{\infty}$-algebra equipped with certain additional structure. As an application, in Section 4 we establish our main result which can be stated as follows.

\smallskip

\noindent{\bf Theorem.} There exist explicit formulae for simplicial cochains representing the real Chern classes of the bundle $E$ in terms of the
curved $A_{\infty}$-structure on matrix-valued cochains discussed above.

\smallskip

We point out that the problem of discretizing the vector bundle itself is not treated in this paper. We conjecture that it is possible to construct a
curved $A_{\infty}$-algebra out of a purely combinatorial data representing the bundle and then obtain the characteristic classes from this algebra.
We also hope that the notions of connection and curvature presented in this work might be useful in developing an alternative approach to lattice gauge theory and to the problem of constructing continuous quantum gauge fields.

We note that formulas for simplicial representatives of the integral Chern classes of a principal bundle have been obtained in \cite{PS}  by completely different methods, using a particular model of the classifying space of the general linear group.

\smallskip

Let us describe the contents of the paper in more detail.

In Section 2, after recalling some basic definitions we describe a general procedure of deforming $A_{\infty}$-algebras to curved ones using infinite series along a ``connection'', i.e. an arbitrary element of degree 1. We use the opportunity to derive analogous results for $L_{\infty}$-algebras, the homotopy associative generalization of Lie dg algebras, building on an observation from \cite{G}.

In Section 3, we begin with a brief review of homological perturbation theory and then show how to use it to transfer curved dg structures that are deformations of ordinary dg algebras in the above sense along contractions. We also generalize the sum over rooted planar tree formulas of Kontsevich and Soibelman
\cite{KS} to this setting. As an application, we obtain the curved $A_{\infty}$-structure on matrix-valued simplicial cochains discussed above.

In Section 4, we develop an algebraic homotopy associative Chern-Weil theory. We introduce the notion of a Chern-Weil triple, a curved $A_{\infty}$-algebra together with a map into a chain complex satisfying certain symmetry properties. We define the Chern character of a Chern-Weil triple, show that it is natural with respect to an appropriate class of morphisms and finally prove the theorem stated above.

\section{Curved $A_{\infty}$- and $L_{\infty}$-structures. Deformations}

\subsection{Basic definitions}

Throughout this paper, we will work with non-negatively graded vector spaces over a fixed field $\textbf{k}$
that is either the real or the complex numbers.
By a chain complex we always mean a graded vector space equipped with a differential of degree 1.
We write $V^{\otimes{k}}$ for the $k$-th ({\it graded})
tensor power of a graded vector space $V=\bigoplus_{p}V_{p}$ so that $V^{\otimes{0}}=\textbf{k}$.
When applying graded maps to graded objects, we always use the {\it Koszul sign rule},
by which we mean the appearance of a sign $ (-1)^{\text{deg}(a)\,\text{deg}(b)}$
when switching two adjacent graded symbols $a$ and $b$.

\begin{definition}\label{curvedAinfinity}
A {\bf curved $A_{\infty}$-structure}
on a graded vector space $V$
is given by a family of linear maps
$$
{m_{k}}:
V^{\otimes{k}}\rightarrow V, \quad k \, = \, 0,1,2,\ldots
$$
satisfying for every $n\geqslant 0$ the identity
\be{Ainfide}
\sum_{r+s+t \, = \, n}(-1)^{r+st} \, m_{r+t+1}
\left(\textbf{1}_{V}^{\otimes{r}}\otimes m_{s} \otimes \textbf{1}_{V}^{\otimes{t}}\right) \, = \, 0 \,.
\ee
The map $m_{k}$ is assumed to have degree $2-k$.
\end{definition}

It appears that the term curved
$A_{\infty}$-algebra $V$ has been used for first time in \cite{N}. In the special case when the map $m_{0}:\textbf{k} \rightarrow V_{2}$ is $0$ we obtain
the usual notion of $A_{\infty}$-algebra (see \cite{S},\cite{GJ} and the surveys \cite{H3} and \cite{Ke}).
We will (by a slight abuse of notation) denote the degree 2 element of $V$ that is the image of
$1\in \textbf{k}$ under  $m _{0}$ also by $m_{0}$ and  we will refer to it as the {\bf curvature} of the curved
$A_{\infty}$-algebra $V$.
This terminology is motivated by the following example. When  $m_{k}=0$  for all $k>2$ we obtain the definition of {\bf curved differential graded (dg) algebra}
\cite{Po}.
In this case, the first four identities in Eq. (\ref{Ainfide}) read
\be{cdg1}
m_{1} m_{0} \, = \, 0 \,,
\ee
\be{cdg2}
m_{1} m_{1} \, = \, m_{2} (m_{0} \otimes \textbf{1})-m_{2}(\textbf{1} \otimes m_{0}) \,,
\ee
\be{cdg3}
m_{1} m_{2} \, = \, m_{2}(m_{1} \otimes \textbf{1})-m_{2}(\textbf{1} \otimes m_{1}) \,,
\ee
\be{cd4}m_{2} (m_{2} \otimes \textbf{1}) \, = \, m_{2}(\textbf{1} \otimes m_{2}) \,.
\ee
Eq. (\ref{cdg1}) can be interpreted as an abstract Bianchi identity,
Eq. (\ref{cdg2}) says that square of the ``covariant derivative'' $m_{1}$ equals to a commutator with
the curvature, (\ref{cdg3}) expresses the fact that $m_{1}$ is a (graded) derivation
and the last identity amounts to the associativity of the algebra.

In this paper, we will be mostly interested in examples of curved dg algebras arising from vector bundles.

\begin{example}\label{ex1section1}
Let $E$ be a real or complex vector bundle with connection $\nabla$ over a smooth manifold $M$ and
let $\text{End}(E)$ denote the bundle of endomorphisms of $E$.
The graded space $\Omega^{\bullet}(M,\text{End}(E))$
of differential forms on $M$ with values in
$\text{End}(E)$ becomes a curved dg algebra by taking $m_{0}$ to be the curvature of $\nabla$,
$m_{1}$ to be the induced connection on $\text{End}(E)$ and $m_{2}$ to be the wedge product
combined with the composition on $\text{End}(E)$.
\end{example}

\begin{example}\label{ex2section1}
Let $A$ be (an ordinary) dg algebra with differential $d$ and let $\gamma$ be a fixed degree 1 element in $A$.
It is easy to check that we can {\em deform}
$A$ to a curved dg algebra as follows.
Set
$$
m_{0} \, = \, d\gamma+\gamma . \gamma \,,
$$
$$
m_{1} \, = \, d+\bigl[\gamma,\cdot\bigr]
$$
and take $m_{2}$ to be the multiplication ``$\cdot$'' in $A$
(here $\bigl[\cdot,\cdot\bigr]$ denotes the (graded) commutator with respect to the multiplication in $A$).
\end{example}

Clearly when the bundle $\text{End}(E)$ is trivial, the curved dg algebra $\Omega^{\bullet}(M,\text{End}(E))$ from Example \ref{ex1section1}
is a  particular example of such a deformation.
In general, a related example of a curved dg algebra that is a deformation of a dg algebra
may  be obtained as follows.

\begin{example}\label{ex3section1}
Endow the bundle $E$ with an inner product and embed it into a trivial bundle $\widetilde{E}$
of rank $l$ in order to obtain a matrix idempotent $P$ over the algebra of smooth functions on $M$
(i.e., $P$ is a section of $\text{End} \bigl(\widetilde{E}\bigr)$ with $P^2=P$,
that projects onto $E \subseteq \widetilde{E}$).
Now we can deform the dg algebra of matrix-valued forms
$\Omega^{\bullet}(M,\textbf{M}_{l}(\textbf{k}))$
$\cong$
$\Omega^{\bullet}(M) \otimes \textbf{M}_{l}(\textbf{k})$
by the degree 1 element $\gamma=PdP$ as above.
\end{example}

Next we describe a well-known alternative description of  curved $A_{\infty}$-algebras.
Let $\text{T}(V)=\textbf{k}\oplus V \oplus V^{\otimes{2}}\oplus \dots$
be the {\it full} tensor coalgebra of the graded vector space $V$.
If $V=\bigoplus_{p}V_{p}$ its {\bf suspension} $\text{s}V$ is defined by $(\text{s}V)_{p}=V_{p-1}$.

\begin{proposition}\label{altdefa}
There is a natural bijection between curved $A_{\infty}$-structures on $V$ and square zero coderivations
of degree $1$ on $\text{T}(\text{s}V)$.
\end{proposition}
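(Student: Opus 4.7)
My plan is to apply the classical cofree coalgebra structure of the tensor coalgebra. Equip $T(sV) = \bigoplus_{k \geq 0} (sV)^{\otimes k}$ with the deconcatenation coproduct; this exhibits it as the cofree conilpotent coassociative coalgebra on $sV$. Coderivations $D : T(sV) \to T(sV)$ are then in bijective correspondence with sequences of component maps $D_{k} : (sV)^{\otimes k} \to sV$ for $k \geq 0$, and any such $D$ is recovered from its components by
\begin{equation*}
D|_{(sV)^{\otimes n}} \;=\; \sum_{r+s+t=n} \mathbf{1}^{\otimes r} \otimes D_{s} \otimes \mathbf{1}^{\otimes t}.
\end{equation*}
Restricting to degree-$1$ coderivations amounts to requiring $\deg(D_{k}) = 1$ for every $k$.

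The next step will be to transport this data through the suspension. The suspension isomorphism identifies, up to standard Koszul signs, sequences of degree-$1$ maps $D_{k} : (sV)^{\otimes k} \to sV$ with sequences of degree-$(2-k)$ maps $m_{k} : V^{\otimes k} \to V$; in particular, the curvature $m_{0} \in V_{2}$ corresponds to $D_{0} : \mathbf{k} \to sV$. It then suffices to show that $D^{2} = 0$ is equivalent to the $A_{\infty}$ identities (\ref{Ainfide}). Since $D$ has odd degree, $D^{2} = \tfrac{1}{2}[D,D]$ is again a coderivation of $T(sV)$, hence $D^{2} = 0$ if and only if its projection to $sV$ vanishes on every $(sV)^{\otimes n}$. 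Expanding $D^{2}$ via the recovery formula and projecting yields an identity of the form
\begin{equation*}
\sum_{r+s+t=n} \pm \, D_{r+t+1} \circ \bigl( \mathbf{1}^{\otimes r} \otimes D_{s} \otimes \mathbf{1}^{\otimes t} \bigr) \;=\; 0,
\end{equation*}
and translating back to $V$ through the suspension reproduces (\ref{Ainfide}).

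The chief obstacle will be the Koszul sign bookkeeping. Three separate sources contribute: Koszul commutation of the degree-$1$ operator $D_{s}$ past $\mathbf{1}^{\otimes r}$; distribution of the factors of $s$ and $s^{-1}$ across tensor products, which brings in the degrees of the arguments; and the outer conjugation relating $D_{r+t+1}$ to $m_{r+t+1}$, whose sign depends on $s$ through the varying degree $2-s$ of $m_{s}$. I expect these three contributions to combine to give precisely the sign $(-1)^{r+st}$ in (\ref{Ainfide}); verifying this combinatorial identity of signs is the main computational task, while the structural content of the proposition lies in the cofreeness argument of the first paragraph.
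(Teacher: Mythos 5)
Your proposal is correct and takes essentially the approach the paper relies on: the paper gives no proof of its own, referring instead to Getzler--Jones \cite[Proposition 1.2]{GJ}, whose argument is precisely your identification of coderivations of the deconcatenation coalgebra $\text{T}(\text{s}V)$ with their corank-one components $D_k:(\text{s}V)^{\otimes k}\rightarrow \text{s}V$ (including the $k=0$ curvature term), followed by projecting $D^2=0$ and desuspending. The only piece you defer --- checking that the Koszul, suspension and conjugation signs combine to $(-1)^{r+st}$ in (\ref{Ainfide}) --- is exactly the routine computation carried out in that reference, so there is no gap in substance.
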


The reader is referred to \cite[Proposition 1.2]{GJ} for the proof and the relevant definitions.
This proposition allows us to make the following

\begin{definition} A homomorphism between two curved $A_{\infty}$-algebras $V$ and $V'$ is a map
$F:\text{T}(\text{s}V) \rightarrow \text{T}(\text{s}V')$ of dg coalgebras.
\end{definition}

More explicitly, such a homomorphism $F$ is given by a family of linear maps
$F_{k}:V^{\otimes{k}}\rightarrow V'$ of degree $1-k$, $k=0,1,2,\ldots$
satisfying
\be{ainfhom1}
F_{1} m_{0}^{V} \, = \, m_{0}^{V'}\,,
\ee
and for every $n>0$ the identity (cf. \cite{N})
$$
\sum_{r+s+t \, = \, n}\!(-1)^{r+st} \, F_{r+t+1} \,\, (\textbf{1}^{\otimes{r}}\otimes m^V_{s} \otimes \textbf{1}^{\otimes{t}})
$$
\be{ainmorph}
\, = \,
\hspace{-7pt}
\smtwo{1 \, \leqslant \, q \, \leqslant \, n}{i_{1}+ \ldots + i_{q} \, = \, n}
\hspace{-9pt}
(-1)^{w} \, m^{V'}_{q} \,\, (F_{i_{1}}\otimes \ldots \otimes F_{i_{q}})\,,
\ee
where
$w=\sum_{l \, = \, 1}^{q-1}(q-\ell)(i_{\ell}-1)$.
Note that $F_{0}$ necessarily is 0. The morphism $F$ is called {\bf strict} if  $F_{k}=0$ for every $k>1$.

Let $F=\{F_{k}\}: V \rightarrow V'$ and $G=\{G_{k}\}: V' \rightarrow V''$ be morphisms between curved $A_{\infty}$-algebras. Then the $p$-th
component of their composition is given by the formula (cf. \cite{N})

\be{morphcomp}
(G  F)_{p}=\hspace{-7pt}\smtwo{1 \, \leqslant \, q \, \leqslant \, p}{i_{1}+ \ldots + i_{q} \, = \, p}
\hspace{-9pt}
(-1)^{w} \, G_{q} \,\, (F_{i_{1}}\otimes \ldots \otimes F_{i_{q}})\, .
\ee

\medskip

We proceed to define curved $L_{\infty}$-algebras following the sign conventions of \cite{LM}.
Given graded variables $x_{1},x_{2},\dots, x_{n}$ and a permutation $\sigma \in \textbf{S}_{n}$,
let $\epsilon(\sigma;x_{1},\ldots, x_{n})$ be the Koszul sign obtained when passing from
$x_{1},\ldots, x_{n}$ to $x_{\sigma(1)},\ldots,x_{\sigma(n)}$.
Set
\be{chi}
\chi(\sigma)=\text{sgn}(\sigma)\,\epsilon(\sigma;x_{1},\ldots, x_{n})\,.
\ee
We say that $\sigma \in \textbf{S}_{n}$ is a $(k,n-k)$--{\bf unshuffle},
$0\leqslant k \leqslant n$, if  $\sigma(1)<\cdots<\sigma(k)$ and $\sigma(k+1)<\cdots<\sigma(n)$.

\begin{definition}\label{curvedLinf}
A {\bf curved $L_{\infty}$-structure} on a graded vector space $V$ is given by a family of linear maps
$l_{k}:V^{\otimes{k}}\rightarrow V$ of degree $2-k$, $k=0,1,2,\ldots$ that are
{\em graded}
antisymmetric in the sense that
\[
l_{k}(x_{\sigma(1)},\ldots,x_{\sigma(k)})=\chi({\sigma}) \, l_{k}(x_{1},\ldots, x_{k})
\]
for all $\sigma \in \textbf{S}_{n}$ and $x_{1},\ldots, x_{k}\in V$,
and moreover satisfy the following generalized Jacobi identities for every $n\geqslant 0$
\be{Liide}
\sum_{k \, = \, 0}^{n}(-1)^{k(n-k)}\sum_{\sigma}
\chi({\sigma}) \, l_{n-k+1}(l_{k}(x_{\sigma(1)},\ldots, x_{\sigma(k)}),x_{\sigma(k+1)},\ldots,x_{\sigma(n)}) \, = \, 0 \,,
\ee
where the summation is taken over all $(k,n-k)$ unshuffles $\sigma$.
\end{definition}

In the case when the curvature $l_{0}:k \rightarrow V_{2}$ is $0$, we obtain the definition
of an $L_{\infty}$-algebra (cf. \cite{LS}). When the maps $l_{k}$ are equal to $0$ for all $k>2$ we obtain the notion of {\bf curved dg Lie algebra}.

\begin{example}
Let $L$ be (an ordinary) dg Lie algebra with differential $d$ and let $\gamma$
be a fixed element of degree $1$ in $L$.
Then it is straightforward to check that we can deform $L$ to a curved dg Lie algebra as follows.
Set
\[
l_{0}=d\gamma+\frac{1}{2}[\gamma , \gamma]\,,
\]
\[
l_{1}=d+\bigl[\gamma,\cdot\bigr]
\]
and take $l_{2}$ to be commutator $\bigl[\cdot,\cdot\bigr]$ in $L$.
\end{example}

\begin{example}\label{cdglex}
Let $G$ be Lie group with Lie algebra $\mathfrak{g}$ and let $P$ be a smooth principal $G$-bundle.
The space $\Omega^{\bullet}(P,\mathfrak{g})$ of $\mathfrak{g}$-valued differential forms on the total space of
$P$ has a natural structure of a dg Lie algebra. One can use a connection on $P$, i.e. an invariant $1$-form in
$\Omega^{\bullet}(P,\mathfrak{g})$, to deform this structure to a curved dg Lie algebra as above. A related and perhaps more interesting example
can be obtained by considering the subalgebra of basic (with respect to the action of $G$) forms on $P$ instead of $\Omega^{\bullet}(P,\mathfrak{g})$.
\end{example}

As in the $A_{\infty}$ case, one has an alternative description of  curved $L_{\infty}$-structures
in terms of coderivations.
Let $\text{S}^{k}(V)$ denote the $k$-th (graded) symmetric power of the graded vector space $V$
and let $\text{S}(V)=\textbf{k}\oplus V \oplus \text{S}^{2}(V) \oplus \dots$
be the {\it full} symmetric coalgebra of  $V$.

\begin{proposition} There is a natural bijection between curved $L_{\infty}$-structures
on $V$ and square zero coderivations of degree $1$ on $\text{S}(\text{s}V)$.
\end{proposition}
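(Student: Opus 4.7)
The proof parallels that of Proposition~\ref{altdefa}, replacing the tensor coalgebra $\text{T}(\text{s}V)$ by the cocommutative coalgebra $\text{S}(\text{s}V)$ and tracking the additional signs introduced by graded symmetrization. First, I would invoke the cofreeness of $\text{S}(\text{s}V)$: any coderivation $D$ on $\text{S}(\text{s}V)$ is uniquely determined by the sequence of its corestrictions $D_{k}:\text{S}^{k}(\text{s}V)\to \text{s}V$ along the canonical projection $\pi:\text{S}(\text{s}V)\to \text{s}V$, and $D$ is recovered from the $D_{k}$ by the co-Leibniz rule using the cocommutative comultiplication of $\text{S}(\text{s}V)$.

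Second, the desuspension identifies each degree $1$ map $D_{k}:\text{S}^{k}(\text{s}V)\to \text{s}V$ with a linear map $l_{k}:V^{\otimes k}\to V$ of degree $2-k$; the graded symmetry of $\text{S}^{k}(\text{s}V)$, combined with the Koszul sign rule for the parity flip $V \leftrightarrow \text{s}V$, converts to the graded antisymmetry $l_{k}(x_{\sigma(1)},\ldots,x_{\sigma(k)})=\chi(\sigma)\,l_{k}(x_{1},\ldots,x_{k})$ from Definition~\ref{curvedLinf}. This sets up the bijection on the level of data.

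Third, since the composition of two coderivations on a coalgebra is again a coderivation, $D^{2}=0$ is equivalent to $\pi\circ D^{2}=0$. Unfolding the corestriction of $D^{2}$ on $\text{S}^{n}(\text{s}V)$ via the cocommutative coproduct produces a sum indexed by the $(k,n-k)$-unshuffles $\sigma\in \textbf{S}_{n}$, with the $\sigma$-term being $D_{n-k+1}\circ(D_{k}\otimes \mathbf{1}^{\otimes(n-k)})$ applied to the reshuffled input. Translating back along the desuspension and collecting signs recovers exactly the generalized Jacobi identities~(\ref{Liide}); reading the argument in reverse produces $D$ from a given curved $L_{\infty}$-structure.

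The central difficulty is the sign bookkeeping. One must verify that the combined contributions of the degree $1$ nature of the coderivation, the $k$-fold suspension on the source together with the single suspension on the target, the Koszul permutation sign $\epsilon(\sigma;x_{1},\ldots,x_{n})$, and the sign $\text{sgn}(\sigma)$ that arises from cocommutativity, agree precisely with $(-1)^{k(n-k)}\chi(\sigma)$. Once these signs are matched, the equivalence of $D^{2}=0$ with~(\ref{Liide}) is immediate, and the construction is manifestly inverse to the passage from $\{l_{k}\}$ to $D$ given by the co-Leibniz extension of the desuspended components.
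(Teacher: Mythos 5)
Your outline is correct and is, in substance, the standard Lada--Stasheff-type argument; note, however, that the paper does not carry this computation out at all. Its proof consists of citing \cite{LS} for the uncurved statement (the bijection between $L_{\infty}$-structures and degree $1$ square-zero coderivations on the \emph{reduced} symmetric coalgebra on $\text{s}V$) and remarking that the curved case follows by replacing the reduced symmetric coalgebra with the full one $\text{S}(\text{s}V)$ so as to incorporate $l_{0}$. So the difference is one of presentation: you reconstruct the proof directly from the determined-by-corestriction property of coderivations on $\text{S}(\text{s}V)$, the d\'ecalage identification of graded symmetric degree $1$ maps $\text{S}^{k}(\text{s}V)\to \text{s}V$ with graded antisymmetric degree $2-k$ maps $V^{\otimes k}\to V$ carrying the sign $\chi(\sigma)$ of Definition~\ref{curvedLinf}, and the unshuffle expansion of the corestriction of $D^{2}$, whereas the paper leans on the literature. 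If you keep your route, make explicit that the arity-zero corestriction $D_{0}:\textbf{k}\to \text{s}V$ is precisely what encodes the curvature $l_{0}$, and that admitting it is exactly the difference between the full and the reduced symmetric coalgebra; this is the only genuinely new point of the curved case and is the point the paper's one-line proof emphasizes.

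One sentence needs repair: the composition of two coderivations is \emph{not} in general a coderivation (the cross terms in the co-Leibniz identity do not cancel). What is true, and what you actually need, is that the graded commutator of coderivations is a coderivation; since $D$ has odd degree, $D^{2}=\tfrac{1}{2}[D,D]$ is a coderivation, and only then does cofreeness give that $D^{2}=0$ is equivalent to $\pi\circ D^{2}=0$. With that correction, and with the deferred sign bookkeeping actually carried out (it is exactly the computation of \cite{LS}, with the conventions of \cite{LM} matching the factor $(-1)^{k(n-k)}\chi(\sigma)$ in Eq.~(\ref{Liide})), your proof is complete and consistent with the paper's.
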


It was proved in \cite{LS} that there is a bijection
between (ordinary) $L_{\infty}$-structures
on $V$ and square zero coderivations of degree $1$ on the {em reduced} symmetric algebra on  $\text{s}V$. The proof in the curved case is completely analogous; one simply has to replace the reduced symmetric algebra with the full symmetric algebra in order to incorporate
the curvature $l_{0}$.
As above, this proposition allows us to define homomorphisms of curved $L_{\infty}$-algebras.

\subsection{Curved $A_{\infty}$- and $L_{\infty}$-structures as deformations}

In this subsection we describe a method of perturbing $A_{\infty}$- and $L_{\infty}$-algebras to curved ones,
generalizing the examples already given in the previous subsection.
We first treat the (slightly easier) case of $L_{\infty}$-algebras.

Let $L$ be an $L_{\infty}$-algebra with maps $\ell_{k}:L^{\otimes{k}}\rightarrow L$ and let $\gamma$ be an element
in $L$ of degree $1$.
For  $x_{1},\ldots,x_{k} \in L$ consider the (formal) infinite series:
\begin{align}\label{deforml}
\ell_{0}^{\gamma} \, = & \ \sum_{k \, = \, 1}^{\infty}\frac{1}{k!}\,\ell_{k}(\gamma,\gamma,\dots, \gamma) \,,
\\
\ell_{n}^{\gamma}(x_{1},\ldots,x_{n}) \, = & \
\ell_{n}(x_{1},\ldots,x_{n})
+\sum_{k \, = \, 1}^{\infty}\frac{1}{k!}\,\ell_{k+n}(\gamma,\ldots ,\gamma,x_{1},\ldots,x_{n})
\,, \quad n \,\geqslant\, 1.
\nonumber
\end{align}
We have the following generalization of Proposition 4.4 and Lemma 4.5 in \cite{G}.

\begin{proposition}\label{linfdeform}
Let $L$ be an $L_{\infty}$-algebra which is a Banach space and assume that all maps $\ell_{k}$ are continuous
{\rm (}with respect to the induced norm on the tensor powers of $L${\rm )}.
Suppose also that the series (\ref{deforml}) are point-wise norm convergent.
Then these series define a curved $L_{\infty}$-structure on $L$.
\end{proposition}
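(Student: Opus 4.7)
\medskip

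\noindent\textbf{Proof plan.} The plan is to lift the verification of the generalized Jacobi identities (\ref{Liide}) for $\{\ell_n^\gamma\}$ to the coderivation picture, where the deformation by $\gamma$ becomes a conjugation by a coalgebra automorphism. Let $D$ be the square-zero coderivation of degree $1$ on $S(sL)$ corresponding to the given $L_\infty$-structure via the proposition recalled just above. For $\gamma \in L$ of degree $1$, let $s\gamma$ denote its image in $sL$ and consider the (formal) coalgebra endomorphism $T_\gamma : S(sL) \to S(sL)$ which is the unique coalgebra map extending the linear map $sL \to S(sL)$, $x \mapsto x + s\gamma$. On a symmetric tensor $x_1 \cdots x_n$ one has
\[
T_\gamma(x_1 \cdots x_n) \, = \, \sum_{k \, = \, 0}^{\infty} \frac{1}{k!} \, (s\gamma)^{k} \cdot x_1 \cdots x_n ,
\]
and $T_\gamma$ is formally invertible with inverse $T_{-\gamma}$. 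A direct computation shows that the cocomponents of the conjugated coderivation $D^\gamma := T_{-\gamma} \circ D \circ T_\gamma$, that is the compositions $\pi_{sL} \circ D^\gamma : S^n(sL) \to sL$, are exactly the suspensions of the maps $\ell_n^\gamma$ defined in (\ref{deforml}).

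Once this identification is established, the proof is immediate at the formal level: since $T_\gamma$ is a coalgebra automorphism, $D^\gamma$ is a coderivation, and $(D^\gamma)^2 = T_{-\gamma} \circ D^2 \circ T_\gamma = 0$ because $D^2 = 0$. Unpacking this identity into its cocomponents and using the bijection from the proposition yields precisely the curved $L_\infty$-identities (\ref{Liide}) for $\{\ell_n^\gamma\}$, with the sign and combinatorial factors matching thanks to graded symmetry and the unshuffle summation.

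The main obstacle is making these manipulations rigorous in the infinite-dimensional setting, since $T_\gamma$ and $D^\gamma$ are defined as infinite series. Here the Banach space hypothesis enters: the norm on $L$ induces projective cross-norms on all (symmetric) tensor powers, continuity of each $\ell_k$ gives continuity of the cocomponents of $D$, and the assumed pointwise norm convergence of the series (\ref{deforml}) ensures that $\pi_{sL} \circ D \circ T_\gamma$ is well-defined on every symmetric tensor $x_1 \cdots x_n$. The same input also bounds, for each fixed $n$ and fixed $x_1, \ldots, x_n$, the double series obtained when expanding $(D^\gamma)^2$ on $x_1 \cdots x_n$, allowing a Fubini-type rearrangement that groups together all terms in which a fixed total number of $\gamma$-insertions appears on the inside versus outside of the nested $\ell$'s. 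This rearrangement is precisely what produces the desired identity (\ref{Liide}) from the absolutely convergent sum of the original $L_\infty$-identities applied to tuples of the form $(\gamma,\ldots,\gamma,x_1,\ldots,x_n)$ weighted by $1/m!$ for the number $m$ of $\gamma$'s.
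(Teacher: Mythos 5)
Your proposal is correct, and the conceptual packaging is genuinely different from the paper's, even though its rigorous core ends up being the same computation. The paper proves Proposition \ref{linfdeform} bare-handed: it expands the nested series defining $\ell^{\gamma}_{n-k+1}\circ\ell^{\gamma}_{k}$, collects the $\gamma$-insertions with the binomial factor $p!/(s!(p-s)!)$, and reindexes the whole expression as $\sum_{p}\frac{1}{p!}$ times the ordinary $L_{\infty}$-Jacobi identity evaluated on $(\gamma,\dots,\gamma,x_{1},\dots,x_{n})$, which vanishes term by term --- exactly the rearrangement you describe in your final paragraph. What you add on top of this is the explanation of where the formulas (\ref{deforml}) come from: twisting is conjugation of the codifferential $D$ by the exponential coalgebra map $T_{\gamma}$, so $(D^{\gamma})^{2}=0$ is formally automatic and the signs/unshuffles are bookkept by the coalgebra structure; this also makes the morphism statement (Proposition \ref{linfmorphdeform}) transparent. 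What the paper's direct route buys is that it never leaves $S(sL)$: your $T_{\gamma}$ and $D\circ T_{\gamma}$ only live in the completion $\prod_{n}S^{n}(sL)$, so the bijection between curved structures and square-zero coderivations cannot be invoked verbatim, and making the conjugation rigorous means projecting to cocomponents and justifying exactly the rearrangement the paper performs --- at which point the two proofs coincide. Two small points of care: $T_{\gamma}$ is the coalgebra map with cocomponents $1\mapsto s\gamma$ on $S^{0}(sL)$ and the identity on $sL$ (your description ``extending $x\mapsto x+s\gamma$'' is slightly off, since the shift is really the arity-zero component), and the hypothesis grants only pointwise norm convergence of (\ref{deforml}), so your appeal to absolute convergence and a Fubini-type interchange is not literally supplied by the assumptions; the paper's own rearrangement is carried out at the same level of rigor, so this is not a defect relative to the paper, but it is worth stating as an assumption or checking in the intended examples.
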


\begin{proof}
Let us introduce the abbreviation $\ell_{k}(\gamma^{\wedge (k-i)},x_{1},\ldots,x_{i})$ for
$\ell_{k}(\gamma,$ $\ldots,$ $\gamma,$ $x_{1},$ $\ldots,$ $x_{i})$
($\gamma$ appears $k-i$ times).
A direct computation, via rearranging the terms in the sums, shows that
\begin{align*}
\sum_{k \, = \, 0}^{n} \, & (-1)^{k(n-k)}
\sum_{\sigma} \, \chi({\sigma}) \,
\ell_{n-k+1}^{\gamma}\Bigl(\ell_{k}^{\gamma}\bigl(x_{\sigma(1)},\ldots,
x_{\sigma(k)}\bigr),x_{\sigma(k+1)},\ldots,x_{\sigma(n)}\Bigr)
\\
= \, & \sum_{k \, = \, 0}^{n} \, \sum_{\sigma} \, (-1)^{k(n-k)} \, \chi({\sigma})
\sum_{p \, = \, 0}^{\infty}
\, \frac{1}{p!} \,
\sum_{s \, = \, 0}^{p} \, \frac{p!}{s!(p-s)!}
\\
& \times \ell_{n-k+p+1}\Bigl(\gamma^{\wedge s},\ell_{k+p-s}\bigl(\gamma^{\wedge (p-s)},
x_{\sigma(1)},\ldots, x_{\sigma(k)}\bigr),x_{\sigma(k+1)},\ldots,x_{\sigma(n)}\Bigr)
\\
= \, &
\sum_{p \, = \, 0}^{\infty} \, \frac{1}{p!} \,
\sum_{k \, = \, 0}^{p+n} \, (-1)^{k(p+n-k)} \,
\sum_{\varrho} \, \chi({\varrho}) \,
\\
& \times
\ell_{p+n-k+1}\Bigl(
\ell_{k}\bigl(y_{\varrho(1)},\ldots, y_{\varrho(k)}\bigr),y_{\varrho(k+1)},\ldots,y_{\varrho(p+n)}\Bigr)
\, = \, 0 \,,
\end{align*}
where $y_1=\gamma,\dots,y_p=\gamma,y_{p+1}=x_1,\dots,y_{p+n}=x_n$ and the sum
$\mathop{\sum\limits_{\varrho}}$ is over all \newline $(k,$ $p$ $+$ $n$ $-$ $k)$--unshuffles $\varrho$.
\end{proof}

\begin{remark}
Note that we can similarly deform a curved $L_{\infty}$-algebra by allowing
$k$ to run from $0$ to $\infty$ in the first of Eqs.~(\ref{deforml}). Proposition \ref{linfdeform} with its proof remain valid in this more general setting.
\end{remark}

The process of deformation just described is functorial in the following sense.
Let $L$ and $M$ be two curved $L_{\infty}$-algebras, let $\gamma \in L^{1}$ and
$F= \{F_k\} :L \rightarrow M$
be a morphism of curved $L_{\infty}$-algebras. Suppose that the series
\[\gamma ' = \sum_{k \, = \, 1}^{\infty}
\frac{1}{k!}\, F_{k}(\gamma,\ldots ,\gamma)\]
is norm convergent. Assume further that we can deform $L$ and $M$ by $\gamma$ and $\gamma '$ respectively,
as in Proposition \ref{linfdeform}, to curved $L_{\infty}$-algebras that we denote by $L^{\gamma}$ and
$M^{\gamma '}$, respectively.
Now for every $n\geqslant 1$ consider the series
\be{linfmorph}
F_{n}^{\gamma}(x_{1},\ldots,x_{n}) \, = \,
F_{n}(x_{1},\ldots,x_{n})+\sum_{k \, = \, 1}^{\infty}
\frac{1}{k!} \, F_{n+k}(\gamma,\ldots ,\gamma,x_{1},\ldots,x_{n})\,.
\ee
By a computation similar to the one in the proof of Proposition \ref{linfdeform}, one can show that
the following holds.

\begin{proposition}\label{linfmorphdeform}
Assume that all maps $F_{k}$ representing the morphism
$F :L \rightarrow M$ are continuous and that the series (\ref{linfmorph}) are point-wise norm convergent.
Then these series define a morphism of curved $L_{\infty}$-algebras
\begin{equation}
F^{\gamma}:L^{\gamma} \rightarrow M^{\gamma '} \,.
\tag*{$\Box$}
\end{equation}
\end{proposition}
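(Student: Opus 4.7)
The plan is to mimic the proof of Proposition~\ref{linfdeform}, with the morphism identities satisfied by $F$ playing the role that the generalized Jacobi identities (\ref{Liide}) played there. Writing $\{\ell_k\}$ for the structure maps of $L$ and $\{m_k\}$ for those of $M$, the curved $L_\infty$-morphism identities to verify for $F^\gamma$ split into two kinds: the curvature condition $F^\gamma_1(\ell^\gamma_0)=m^{\gamma'}_0$ (the $L_\infty$-analogue of (\ref{ainfhom1})) and, for each $n\geq 1$, the compatibility of $F^\gamma$ with the higher brackets on $L^\gamma$ and $M^{\gamma'}$.

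First I would substitute the series (\ref{deforml}) for $\ell^\gamma$ and $m^{\gamma'}$, together with the series (\ref{linfmorph}) for $F^\gamma$, into the morphism identity to be verified, and expand. Each resulting term carries some total number $p$ of inserted copies of $\gamma$, distributed among the input slots of the underlying operations $\ell_k$, $m_k$, $F_k$. I would then group the expansion by the total number $p$ of $\gamma$'s and rearrange using the multinomial identity
\[
\sum_{s_{0}+s_{1}+\cdots+s_{q}\,=\,p}\frac{1}{s_{0}!\,s_{1}!\cdots s_{q}!}
\,=\,\frac{1}{p!}\binom{p}{s_{0},s_{1},\ldots,s_{q}},
\]
so that distributing $s_j$ copies of $\gamma$ among individual slots is replaced by summing a single block of $p$ $\gamma$'s over the possible (unshuffle-like) placements. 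The result is precisely $\tfrac{1}{p!}$ times the undeformed morphism identity for $F$, evaluated on the enlarged input sequence $(\gamma,\ldots,\gamma,x_{1},\ldots,x_{n})$ of length $p+n$ and summed over $(k,p+n-k)$-unshuffles of that sequence, exactly as in the rearrangement performed at the end of the proof of Proposition~\ref{linfdeform}. Since $F$ is a morphism of curved $L_\infty$-algebras, each $p$-graded piece vanishes, and the assumed pointwise norm convergence of (\ref{deforml}) and (\ref{linfmorph}) makes the reordering of the series legitimate.

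The curvature compatibility is handled by an $n=0$ specialisation of the same calculation, which collapses to the $n=0$ morphism identities for $F$ applied with varying numbers of copies of $\gamma$. The main obstacle is purely bookkeeping: verifying that the Koszul signs $\epsilon(\sigma;\cdot)$, the signature factors $\chi(\sigma)$ from (\ref{chi}), and the relative signs $(-1)^{k(n-k)}$ in (\ref{Liide}) combine correctly once $\gamma$'s are interleaved with the $x_i$'s. Because $\gamma$ has odd degree, permuting two $\gamma$'s past each other contributes a nontrivial Koszul sign, but these signs are symmetric in the $\gamma$'s and are precisely what assembles the multinomial coefficient above---the same cancellation mechanism that powers the proof of Proposition~\ref{linfdeform}.
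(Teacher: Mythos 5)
Your proposal is correct and takes essentially the same route as the paper, which itself gives no details beyond ``by a computation similar to the one in the proof of Proposition~\ref{linfdeform}'': expanding the deformed morphism identities, regrouping by the total number $p$ of inserted $\gamma$'s so that the factorials and unshuffle sums reassemble $\tfrac{1}{p!}$ times the morphism identities for $F$ on $(\gamma,\ldots,\gamma,x_1,\ldots,x_n)$, with the convergence hypotheses justifying the rearrangement. Two cosmetic points only: the displayed ``multinomial identity'' should read $\frac{1}{s_0!\cdots s_q!}=\frac{1}{p!}\binom{p}{s_0,\ldots,s_q}$ for each fixed decomposition (no sum on the left), and the curvature identity for $F^{\gamma}$ collapses to the whole family of morphism identities for $F$ evaluated on copies of $\gamma$, not just the $n=0$ one; neither affects the argument.
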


Curved $A_{\infty}$-algebras can be obtained as deformations of ordinary $A_{\infty}$-algebras
in an analogous fashion.
More generally,
let $A$ be a curved $A_{\infty}$-algebra with maps $m_{k}:A^{\otimes{k}}\rightarrow A$
($k=0,1,\dots$)
and let $\gamma$ be in $A_{1}$.
For  $x_{1},\ldots,x_{k} \in A$ consider the infinite series
\begin{align}\label{deforma}
m_{0}^{\gamma} \, & =  \, \sum_{k \, = \, 0}^{\infty}m_{k}(\gamma,\gamma,\dots, \gamma) \,,
\\
m_{n}^{\gamma}(x_{1},\ldots,x_{n}) \, & =  \,
\sum_{k \,  = \, 0}^{\infty}\sum_{\sigma} \, \chi(\sigma) \,
m_{n+k}\bigl(y_{\sigma(1)},\ldots ,y_{\sigma(k)},y_{\sigma(k+1)},\ldots,y_{\sigma(k+n)}\bigr) \,,
\nonumber
\end{align}
where
$y_1=\gamma,\dots,y_k=\gamma,y_{k+1}=x_1,\dots,y_{k+n}=x_n$,
the summation is taken over all permutations
$\sigma \in \textbf{S}_{k+n}$ that are $(k,k+n)$-unshuffles,
and $\chi(\sigma)$ is the sign appearing in (\ref{Liide}).

\begin{proposition}\label{ainfdeform}
Let $A$ be a curved $A_{\infty}$-algebra that is a Banach space.
Assume that the maps $\{m_{k}\}_{k=0}^{\infty}$ are continuous and that the series (\ref{deforma}) are point-wise norm convergent.
Then these series define a curved $A_{\infty}$-structure on $A$.
\end{proposition}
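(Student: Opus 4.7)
The plan is to proceed by direct substitution and rearrangement, along the same lines as the proof of Proposition \ref{linfdeform}. Write
\[
S_n(x_1,\ldots,x_n) \, := \sum_{r+s+t \, = \, n} (-1)^{r+st} \, m^\gamma_{r+t+1}\bigl(\textbf{1}^{\otimes r} \otimes m^\gamma_s \otimes \textbf{1}^{\otimes t}\bigr)(x_1,\ldots,x_n);
\]
the goal is to show that $S_n = 0$ for every $n \geqslant 0$. Expanding each $m^\gamma$ via (\ref{deforma}) turns $S_n$ into a sum indexed by a pair of non-negative integers $p,q$ (the numbers of $\gamma$-insertions in the inner and outer maps) together with an inner $(p,s)$-unshuffle that places $p$ copies of $\gamma$ among $x_{r+1},\ldots,x_{r+s}$ and an outer $(q,r+t+1)$-unshuffle that places $q$ copies of $\gamma$ among the arguments of the outer $m_{r+t+1+q}$.

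The key step is to reorganize this sum by fixing the total number $N:=p+q$ of inserted $\gamma$'s and, for each $N$, fixing the interleaving pattern of $\gamma$'s and $x_i$'s in the final tensor expression. For each such pattern one shows, after summing over the splittings $N = p+q$ and over the $A_\infty$-indices $r,s,t$ compatible with it, that the resulting expression equals the left-hand side of the curved $A_\infty$-identity (\ref{Ainfide}) of order $n+N$ for the original maps $m_k$, applied to the corresponding mixed tuple of $\gamma$'s and $x_i$'s. By that identity each such sum vanishes, and summing over all patterns yields $S_n = 0$. The pointwise norm convergence assumed in (\ref{deforma}), combined with the continuity of the $m_k$'s, supplies the absolute convergence needed to justify the rearrangement of series, exactly as in Proposition \ref{linfdeform}.

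The main obstacle is the Koszul-sign bookkeeping: one must verify that the sign $(-1)^{r+st}$ in the $A_\infty$-identity for the deformed maps, combined with the Koszul signs $\chi(\sigma)$ produced by the two shuffles in (\ref{deforma}), reassembles to the sign $(-1)^{r'+s't'}$ of the order $n+N$ identity for the original maps, where $r',s',t'$ are the $A_\infty$-indices determined by the chosen pattern. This is more delicate than in the $L_\infty$ case, since the $A_\infty$-sign depends on the positions of the arguments rather than only on their number. A conceptual alternative, if one prefers, is to pass to the coalgebra picture of Proposition \ref{altdefa}: the deformation by $\gamma$ should correspond to conjugating the square-zero coderivation encoding $\{m_k\}$ on $\text{T}(\text{s}V)$ by the coalgebra endomorphism that inserts $\text{s}\gamma$ at every slot of a tensor, and then $(D^\gamma)^2=0$ would follow formally from $D^2=0$. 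However, the direct combinatorial argument is cleaner in the Banach-space setting, since it avoids working in a completion of $\text{T}(\text{s}V)$ and relating the formal coalgebra identity to the analytic convergence hypothesis.
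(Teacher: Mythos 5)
Your argument is correct and is essentially the paper's own proof: the authors likewise expand each $m^{\gamma}$ via (\ref{deforma}), reindex by the total number of inserted $\gamma$'s and the unshuffle recording their positions among the $x_i$'s, and recognize the inner sum over $r+s+t=k+n$ as the order-$(k+n)$ identity (\ref{Ainfide}) applied to the mixed tuple $(y_1,\dots,y_{k+n})$, with the convergence and continuity hypotheses justifying the rearrangement. The sign bookkeeping you flag (the factor $\chi_{p,q,r,s}$ recombining into $(-1)^{r'+s't'}\chi(\sigma)$) is exactly the step the paper also treats, albeit tersely, and your coalgebra remark is only an aside, so no further comparison is needed.
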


\begin{proof}
We check that Eq. (\ref{Ainfide}) holds and obtain
\begin{align}\label{calc1}
\sum_{r+s+t \, = \, n} & (-1)^{r+st} \, m^{\gamma}_{r+t+1}
\Bigl(x_1,\dots,x_r,m^{\gamma}_{s} \bigl(x_{r+1},\dots,x_{r+s}\bigr),x_{r+s+1},\dots,x_n\Bigr)
\nonumber \\
= & \, \sum_{r+s+t \, = \, n} \,
\sum_{p \, = \, 0}^{r+t+1} \, \sum_{q \, = \, 0}^{s} \,
(-1)^{r+st} \, \chi_{p,q,r,s} \,
\nonumber \\ & \times
m_{p+r+t+1}
\Bigl(
\mathop{\dots}\limits^{\mathop{\vee}\limits^{\hspace{-0.5pt}\gamma\hspace{0.5pt}}},
x_1,\mathop{\dots}\limits^{\mathop{\vee}\limits^{\hspace{-0.5pt}\gamma\hspace{0.5pt}}},
x_r,\mathop{\dots}\limits^{\mathop{\vee}\limits^{\hspace{-0.5pt}\gamma\hspace{0.5pt}}},
m_{q+s} \bigl(\mathop{\dots}\limits^{\mathop{\vee}\limits^{\hspace{-0.5pt}\gamma\hspace{0.5pt}}},
x_{r+1},\mathop{\dots}\limits^{\mathop{\vee}\limits^{\hspace{-0.5pt}\gamma\hspace{0.5pt}}},
x_{r+s},\mathop{\dots}\limits^{\mathop{\vee}\limits^{\hspace{-0.5pt}\gamma\hspace{0.5pt}}}\bigr),
\nonumber \\
& \hspace{59pt}
\mathop{\dots}\limits^{\mathop{\vee}\limits^{\hspace{-0.5pt}\gamma\hspace{0.5pt}}},
x_{r+s+1},\mathop{\dots}\limits^{\mathop{\vee}\limits^{\hspace{-0.5pt}\gamma\hspace{0.5pt}}},
x_n,\mathop{\dots}\limits^{\mathop{\vee}\limits^{\hspace{-0.5pt}\gamma\hspace{0.5pt}}}\Bigr)
\,,
\end{align}
where $\mathop{\vee}\limits^{\hspace{-0.5pt}\gamma\hspace{0.5pt}}$ stands for insertions of $\gamma$'s,
 we have inserted $p$ $\gamma$'s outside $m_s$ and $q$ $\gamma$'s inside,
and $\chi_{p,q,r,s}$ is the resulting sign factor.
Then the right hand side of Eq.~(\ref{calc1}) reads
\begin{align*}
&
\sum_{k \, = \, 0}^{\infty} \, \sum_{\sigma} \, \chi (\sigma) \! \sum_{r+s+t \, = \, k+n} (-1)^{r+st} \,
\\ &
\times
m_{r+t+1}
\Bigl(y_{\sigma(1)},\dots,y_{\sigma(r)},m_{s} \bigl(y_{\sigma(r+1)},\dots,y_{\sigma(r+s)}\bigr),
y_{\sigma(r+s+1)},\dots,y_{\sigma(k+n)}\Bigr)
\, = \, 0 \,,
\end{align*}
where the same notation as in Eq. (\ref{deforma}) is used.
\end{proof}

\medskip

As in the $L_{\infty}$ case, we have the following functoriality result.
Let $A$ and $B$ be two curved $A_{\infty}$-algebras, let $\gamma \in A^{1}$ and
$F=\{F_k\}:A \rightarrow B$ be a morphism of curved $A_{\infty}$-algebras. Assume that the series
\[\gamma ' = \sum_{k \, = \, 1}^{\infty}
F_{k}(\gamma,\ldots ,\gamma)  \]
is norm convergent. Suppose further that we can deform $A$ and $B$ by $\gamma$ and $\gamma '$ respectively,
as in Proposition \ref{ainfdeform}, to curved $A_{\infty}$-algebras that we denote by
$A^{\gamma}$ and $B^{\gamma '}$.
Now for every $n\geqslant 1$ consider the series
\be{ainfmorph}
F_{n}^{\gamma}(x_{1},\ldots,x_{n}) \, = \,
\sum_{k \, = \, 0}^{\infty}\sum _{\sigma}\chi(\sigma)F_{n+k}(y_{\sigma(1)},\ldots,y_{\sigma(k+n)})
\,,
\ee
where again
$y_1=\gamma,\dots,y_k=\gamma,y_{k+1}=x_1,\dots,y_{k+n}=x_n$
and the summation is taken over all permutations
$\sigma \in \textbf{S}_{k+n}$ that are $(k,k+n)$-unshuffles.

\begin{proposition}\label{ainfmorphdeform}
Assume that the maps $\{F_{k}\}_{k \, = \, 1}^{\infty}$ representing the morphism
$F=A \rightarrow B$ are continuous and that the series (\ref{ainfmorph}) are point-wise norm convergent.
Then these series define a morphism of curved $A_{\infty}$-algebras
\begin{equation}
F^{\gamma}:A^{\gamma} \rightarrow A^{\gamma '}\,.
\tag*{$\Box$}
\end{equation}
\end{proposition}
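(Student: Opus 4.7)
The plan is to mimic the proof of Proposition \ref{ainfdeform} essentially verbatim, only now the ``output'' tree has an $F$-vertex somewhere along its spine. First I would verify the zero-component condition~(\ref{ainfhom1}) for the deformed morphism: substituting the series~(\ref{ainfmorph}) at $n=1$ into $m_0^{\gamma,A}$ and using the original identity~(\ref{ainmorph}) for $F$ at order $n=0,1,2,\ldots$ evaluated on tuples consisting only of $\gamma$'s, one sees that the $\gamma$-only contributions assemble precisely into $m_0^{\gamma',B}$ as defined by~(\ref{deforma}). The absolute convergence assumption guarantees that the repeated sums may be interchanged.

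Next, I would verify the main morphism identity~(\ref{ainmorph}) for the deformed datum $F^{\gamma}$, $m^{A^{\gamma}}$, $m^{B^{\gamma'}}$. Plug the series~(\ref{ainfmorph}) and~(\ref{deforma}) into both sides: each side becomes a triply-indexed sum in which one decides (i) how many $\gamma$'s to insert outside the inner operation, (ii) how many $\gamma$'s to insert inside it, and (iii) how to unshuffle these $\gamma$'s past the fixed arguments $x_1,\ldots,x_n$. Combining the two unshuffle summations into a single unshuffle of $k+n$ symbols $y_1=\gamma,\ldots,y_k=\gamma,y_{k+1}=x_1,\ldots,y_{k+n}=x_n$, exactly as in the display at the end of the proof of Proposition~\ref{ainfdeform}, one recognizes the left-hand side as
\[
\sum_{k=0}^{\infty}\sum_{\sigma}\chi(\sigma)\sum_{r+s+t=k+n}(-1)^{r+st}F_{r+t+1}\bigl(y_{\sigma(1)},\ldots,m_s^A(y_{\sigma(r+1)},\ldots,y_{\sigma(r+s)}),\ldots,y_{\sigma(k+n)}\bigr),
\]
and the right-hand side as the analogous double sum over compositions $i_1+\cdots+i_q=k+n$ of
\[
(-1)^{w}\,m_q^B\bigl(F_{i_1}\otimes\cdots\otimes F_{i_q}\bigr)\bigl(y_{\sigma(1)},\ldots,y_{\sigma(k+n)}\bigr).
\]
For each fixed unshuffle $\sigma$, the equality of the inner sums is precisely the morphism identity~(\ref{ainmorph}) for $F$ applied to the tuple $(y_{\sigma(1)},\ldots,y_{\sigma(k+n)})$, so the two sides coincide term by term.

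The main obstacle, as in the proof of Proposition~\ref{ainfdeform}, is purely combinatorial and sign-theoretic: one has to check that the Koszul sign $\chi_{p,q,r,s}$ accumulated by inserting $p$ copies of $\gamma$ outside $m_s^A$ (resp.\ a factor $F_{i_j}$ of the composition) and $q$ copies inside, together with the prefactors $(-1)^{r+st}$ and $(-1)^w$, agrees with the single unshuffle sign $\chi(\sigma)$ produced when the insertion data are reinterpreted as a $(k,k+n)$-unshuffle $\sigma$ of the combined tuple $(y_1,\ldots,y_{k+n})$. Because every $\gamma$ has odd degree $1$, this sign bookkeeping is identical to the one carried out for Proposition~\ref{ainfdeform}; the only new feature is the presence of the outer $F$-vertex, which contributes symmetrically on both sides of~(\ref{ainmorph}) and therefore cancels out of the comparison.

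Finally, the norm-convergence of $\gamma'$ and of the series defining $F_n^{\gamma}$, $m_n^{A^{\gamma}}$ and $m_n^{B^{\gamma'}}$, together with continuity of all structure maps, legitimates all reorderings of the absolutely convergent double and triple sums above. This completes the proposed argument.
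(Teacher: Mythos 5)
Your proposal is correct and follows exactly the route the paper intends: the paper omits the proof of Proposition \ref{ainfmorphdeform} (marking it $\Box$ as a computation analogous to Proposition \ref{ainfdeform}), namely expanding the series (\ref{ainfmorph}) and (\ref{deforma}), regrouping the $\gamma$-insertions into $(k,k+n)$-unshuffles of the combined tuple $y_1,\dots,y_{k+n}$, and invoking the original identities (\ref{ainfhom1}) and (\ref{ainmorph}) for $F$ on these tuples, with convergence and continuity justifying the rearrangements. Your treatment of the all-$\gamma$ blocks reassembling into $\gamma'$ and of the Koszul sign bookkeeping is consistent with the computation carried out in the proof of Proposition \ref{ainfdeform}, so nothing essential is missing.
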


\subsection{Examples}
Let $X$ be a locally finite polyhedron.
We briefly recall, following \cite{CG}, how the complex $C^{\bullet}(X)$
of $\textbf{k}$-valued oriented simplicial cochains on $X$ can be endowed with an $A_{\infty}$-structure
(which is in fact a $C_{\infty}$-structure).
For alternative  constructions of such a structure the reader is referred to the Appendix of \cite{TZ}
and to \cite{DMS}.

We recall the definition of the complex of the piece-wise smooth differential forms on $X$ introduced
by H. Whitney in \cite{Wh} (see also \cite[Chapter 2]{Du} and \cite{BG}). A piece-wise smooth differential $n$-form on $X$ is a collection of forms $\omega=\{\omega_{\sigma}\}$, one for
each simplex $\sigma$ in $X$, such that $\omega_{\sigma}$ is a smooth $n$-form on $\sigma$ and for every inclusion of simplices $i:\sigma_{1} \hookrightarrow \sigma_{2}$ we have $i^{*}\omega_{\sigma_{2}}=\omega_{\sigma_{1}}$. We denote the space of all such collections of forms by $\Omega^{\bullet}(X)$. Since the exterior differential commutes with pull-backs we obtain a well-defined differential on  $\Omega^{\bullet}(X)$.

 The following theorem is due to J. Dupont \cite [Theorem 2.16] {Du}.

\begin{theorem}\label{contraction}
There exists a chain contraction  from
$\Omega^{\bullet}(X)$
to $C^{\bullet}(X)$.
Name\-ly, there exist chain maps $R: \Omega^{\bullet}(X) \rightarrow C^{\bullet}(X)$ and
$W: C^{\bullet}(X) \rightarrow \Omega^{\bullet}(X)$, and a linear map
$H:\Omega^{\bullet}(X) \rightarrow \Omega^{\bullet-1}(X)$ satisfying the relations
\[
R W \, = \, \textbf{1}_{C^{\bullet}(X)}
\]
\begin{equation}
W R-\textbf{1}_{\Omega^{\bullet}(X)} \, = \, dH+Hd \,.
\tag*{$\Box$}
\end{equation}
\end{theorem}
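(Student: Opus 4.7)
The plan is to follow Dupont's explicit construction: define $R$ and $W$ by ``de Rham'' (integration) and ``Whitney'' (barycentric) formulas and verify the simpler identity $RW = \mathbf{1}$ first, then build the homotopy $H$ by a cone/barycentric integration procedure performed simplex by simplex and glued together along flags of faces, and finally verify the chain homotopy identity.

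\textbf{Definition of $R$.} Set $R(\omega)(\sigma) := \int_\sigma \omega_\sigma$ for each oriented simplex $\sigma$. Stokes' theorem, applied face by face and using the compatibility $i^*\omega_{\sigma_2} = \omega_{\sigma_1}$ built into a piece-wise smooth form, shows at once that $R$ commutes with the differentials, hence is a chain map.

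\textbf{Definition of $W$ and $RW = \mathbf{1}$.} For $\sigma = [v_0,\ldots,v_n]$ with barycentric coordinates $\mu_0,\ldots,\mu_n$, define the Whitney elementary form
\[
W(\sigma) := n! \sum_{i=0}^n (-1)^i \mu_i \, d\mu_0 \wedge \cdots \wedge \widehat{d\mu_i} \wedge \cdots \wedge d\mu_n
\]
on the star of $\sigma$ (it extends by zero elsewhere) and extend linearly. One checks that these local expressions are compatible under face inclusions and hence define a piece-wise smooth form on $X$; that $W$ is a chain map follows from $d\mu_0 + \cdots + d\mu_n = 0$ together with a short direct computation. The identity $RW = \mathbf{1}_{C^{\bullet}(X)}$ reduces, by vanishing of $W(\sigma)$ on simplices not containing $\sigma$, to the explicit evaluation $\int_\sigma W(\sigma) = 1$, itself a consequence of the beta-type integral $\int_\sigma \mu_0^{a_0}\cdots\mu_n^{a_n}\,dV \, = \, \tfrac{a_0!\cdots a_n!}{(n+\sum a_i)!}\,n!\,\mathrm{vol}(\sigma)$.

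\textbf{Construction of $H$.} On a single closed simplex $\sigma$ with barycenter $b_\sigma$, let $\varphi_\sigma : [0,1] \times \sigma \to \sigma$, $(t,x) \mapsto (1-t)b_\sigma + tx$, be the straight-line retraction to $b_\sigma$, and set $h_\sigma \omega$ equal to fibre integration of $\varphi_\sigma^* \omega$ over $[0,1]$. Then the local Poincar\'e identity $d h_\sigma + h_\sigma d = \mathrm{id} - e_{b_\sigma}^*$ holds on $\sigma$. Globally, following Dupont, one defines $H$ by an alternating sum indexed by flags $\sigma_0 < \sigma_1 < \cdots < \sigma_p$ of faces of $X$, each term being a composition of the operators $h_{\sigma_i}$ after suitable barycentric extensions. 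The combinatorics of flags is designed precisely so that the $e_{b_\sigma}^*$'s recombine into the Whitney operator $WR$ rather than into plain point evaluations.

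\textbf{Homotopy identity and main obstacle.} The bulk of the work — and the step I expect to be the main obstacle — is verifying $WR - \mathbf{1} = dH + Hd$. I would argue by induction on the dimension of $X$ (equivalently on the length of the flags appearing in $H$): use the local Poincar\'e identity for each $h_\sigma$, observe that the ``boundary'' terms created by pulling back to a face $\tau < \sigma$ in one flag cancel against the corresponding terms of the flag with $\tau$ removed under the alternating sign, and finally identify the surviving evaluation terms with $W(R\omega)$ through the explicit formula for Whitney forms in barycentric coordinates. The analytic ingredients (Stokes, pullback along $\varphi_\sigma$) are routine; the delicate part is the bookkeeping that shows the alternating-flag sum produces exactly $WR - \mathbf{1}$ with no leftover terms.
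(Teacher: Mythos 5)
The paper does not prove this statement at all: it is quoted verbatim from Dupont (\cite[Theorem 2.16]{Du}), so what you are really attempting is a reconstruction of Dupont's argument. Your first half is fine and standard: $R$ by integration (a chain map by Stokes), $W$ by the elementary Whitney forms, compatibility under face inclusions, and $RW=\mathbf{1}$ via the beta-type integral. The gap is in the second half. Your homotopy $H$ is never actually defined -- ``an alternating sum indexed by flags, each term a composition of the $h_{\sigma_i}$ after suitable barycentric extensions'' has no formula, no signs, and no proof that the resulting collection of forms is again piece-wise smooth (the cone operator $h_\sigma$ toward the barycenter does not commute with restriction to faces of $\sigma$, so a simplex-by-simplex definition does not glue; this is precisely the difficulty the flag sum is supposed to cure, and it is asserted rather than addressed). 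The verification of $WR-\mathbf{1}=dH+Hd$, which you yourself call the main obstacle, is likewise only a hoped-for cancellation.

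Moreover, the specific mechanism you propose looks wrong as stated: the error terms produced by your local Poincar\'e identities are point evaluations $e_{b_\sigma}^*$ at barycenters, and composites of such evaluations cannot ``recombine into $WR$'', since $WR$ is built from \emph{integrals} of the form over simplices multiplied by Whitney forms, not from values at points. In Dupont's actual construction the cone contractions are taken toward the \emph{vertices} $e_i$ of each simplex, and the homotopy interleaves them with exterior multiplication by the elementary Whitney forms, schematically $s=\sum_{i_0<\cdots<i_k}\omega_{i_0\cdots i_k}\wedge h_{i_k}\cdots h_{i_0}$ on each simplex; it is exactly these Whitney-form factors, absent from your sketch, that convert the evaluation error terms into the operator $WR$ and make the whole expression natural under face maps so that it descends to $\Omega^{\bullet}(X)$. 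Without an explicit formula of this kind and the accompanying inductive verification, your proposal does not establish the homotopy identity, and hence not the theorem; to complete it you should either import Dupont's vertex-based formula (as the paper does by citation) or supply the analogous explicit operator and carry out the cancellation bookkeeping in full.
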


The map $R$ is given by integration over simplices. The map $W$ can be described as a generalized linear interpolation; its image consist of piece-wise linear forms (see \cite{Wh}).
We shall refer to $R$ (respectively $W$) as the de Rham (respectively Whitney) map.

Note that the complex $\Omega^{\bullet}(X)$ is a dg algebra with respect to the exterior product of forms.
Thus, we can transfer this particular $A_{\infty}$-structure to an $A_{\infty}$-structure on the complex
$C^{\bullet}(X)$ along the contraction given from Theorem \ref{contraction},
using the sum over rooted planar tree formulas of Kontsevich and Soibelman \cite{KS}.
In Sect. 3 we will review these formulas in some detail.

Using a piece-wise smooth metric on $X$ one can define an $L_{2}$-norm on forms in
$\Omega^{\bullet}(X)$
and, using the Whitney embedding $W$, also on cochains in $C^{\bullet}(X)$. We shall denote this norm by $\|.\|$ in what follows. Let $\{m_{k}\}_{k \, = \, 1}^{\infty}$ be the maps defining the
$A_{\infty}$-structure on $C^{\bullet}(X)$ just described.

\begin{lemma}\label{mkestimate} For all $k>1$ and all $x_{1},\ldots, x_{k}\in C^{\bullet}(X)$ one has
\[
\|m_{k}(x_{1},\ldots, x_{k})\| \, \leqslant \, c_{k-1} \, E^{k}\|x_{1}\|\cdots\|x_{k}\| \,,
\]
where $c_{k}=\frac{(2k)!}{(k+1)!k!}$ is the $k$-th Catalan number and $E$ is a constant depending only
on $X$ and the norm $\|.\|$.
\end{lemma}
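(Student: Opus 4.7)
The approach is to apply the Kontsevich–Soibelman sum-over-trees formula (to be reviewed in Section 3) which expresses the transferred $A_{\infty}$-operations as a signed sum indexed by rooted planar binary trees, and then to count and bound the individual contributions. Concretely, for $k \geqslant 2$ one has
\[
m_{k}(x_{1},\ldots,x_{k}) \, = \, \sum_{T} \pm \, m_{T}(x_{1},\ldots,x_{k}),
\]
summed over rooted planar binary trees $T$ with $k$ leaves, where $m_{T}$ is computed by decorating $T$ as follows: apply $W$ at each leaf to the corresponding input $x_{i}$, take the wedge product at each internal vertex, apply the Dupont homotopy $H$ along each internal edge, and apply the de Rham map $R$ at the root. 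The number of such trees is precisely the $(k-1)$-st Catalan number $c_{k-1}$, which supplies the combinatorial factor in the statement.

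With this formula in hand, the lemma reduces to a uniform per-tree bound
\[
\|m_{T}(x_{1},\ldots,x_{k})\| \, \leqslant \, E^{k} \, \|x_{1}\|\cdots\|x_{k}\|,
\]
where $E$ depends only on $X$ and the chosen norm. To obtain it, I would first verify that $W$, $R$, and $H$ are each bounded linear maps with respect to $\|\cdot\|$; this is standard and essentially built into Dupont's construction. A binary tree with $k$ leaves has $k$ leaves, $k-1$ internal vertices, and $O(k)$ internal edges, so a straightforward bottom-up induction on the number of internal vertices then produces a constant of the form $C^{k}$ involving the operator norms above and the operator norm of the wedge product. Absorbing $C$ into a single constant $E$ and multiplying by $c_{k-1}$ yields the stated estimate.

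The principal obstacle I expect is justifying the boundedness of the wedge product, since on general $L_{2}$-forms this simply fails. The remedy is to track the intermediate forms that actually appear at vertices of any tree $T$ and check that they all lie in a fixed class of piecewise polynomial forms whose local polynomial degree is uniformly bounded independently of $T$ and $k$. This relies on two structural facts: the image of $W$ consists of piecewise polynomial (Whitney) forms of controlled degree on each simplex, and Dupont's explicit formula for $H$ raises that degree by only a bounded amount; moreover, since each simplex of $X$ has dimension at most $\dim X$, the exterior degrees cap out and only finitely many wedge factors can contribute nontrivially pointwise. On the resulting finite-dimensional subspace of forms on each simplex all norms are equivalent, so the wedge product is bounded in $\|\cdot\|$ by an application of Cauchy–Schwarz with a constant depending only on $X$. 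This is where the assertion that $E$ depends only on $X$ and $\|\cdot\|$ genuinely gets its content.
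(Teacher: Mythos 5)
Your overall strategy is the paper's own: expand $m_{k}$ via the Kontsevich--Soibelman formula as a signed sum over the $c_{k-1}$ completely binary rooted planar trees with $k$ tails, and then bound each tree term by $E^{k}\|x_{1}\|\cdots\|x_{k}\|$ using boundedness of the maps decorating the tree; the paper's proof consists of exactly these two observations, simply asserting that the exterior product and the homotopy $H$ are bounded in the $L_{2}$-norm. You are right to be suspicious of that assertion as stated---the wedge product is not bounded on all of $\Omega^{\bullet}(X)$ in the $L_{2}$-norm---and this is the one point where your write-up attempts more than the paper does.

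However, the remedy you propose contains a step that fails. The intermediate forms at the vertices of a tree do \emph{not} stay in a class of piecewise polynomial forms whose local polynomial degree is bounded independently of $T$ and $k$: coefficient degrees add under the wedge product and are not lowered by Dupont's homotopy, so along a tree with $k$ tails they grow linearly in $k$ (already an iterated product of the Whitney forms attached to the tails has coefficient degree of order $k$), and the cap $\dim X$ on the \emph{exterior} degree puts no constraint on the coefficient degree. Consequently the norm-equivalence constants on the finite-dimensional spaces you invoke depend on $k$, and composing $O(k)$ such estimates produces a constant growing faster than $E^{k}$, which is exactly what the lemma must avoid. A repair that does yield a uniform $E$ is to run the estimate in the sup norm rather than in $L_{2}$: on the finite-dimensional space of Whitney forms one has $\|Wx\|_{L_{\infty}}\leqslant C_{X}\|x\|$, the wedge product and Dupont's $H$ are bounded for the sup norm with constants depending only on $X$ (the homotopy is an integral over a compact parameter domain with a bounded kernel, so the growth of polynomial degree is harmless), and at the root one returns to the $L_{2}$-type norm on cochains using compactness of $X$. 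Each tree then contributes at most $E^{k}\|x_{1}\|\cdots\|x_{k}\|$ with $E$ depending only on $X$ and the norm, and the Catalan count finishes the argument as you describe.
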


\begin{proof} Observe that the expression for $m_{k}$ given in \cite{KS} in this case reduces to a sum
over all completely binary trees with $k$
tails (see Sect. 3 for the relevant definitions)
and the number of these trees is exactly
$c_{k-1}$.
Now the estimate follows since the exterior product and the homotopy $H$ are bounded maps in the $L_{2}$-norm.
\end{proof}

Observe that we can extend all maps involved in Theorem \ref{contraction} entry-wise to matrix-valued forms and matrix-valued cochains.
We can apply Theorem \ref{contraction} and Lemma \ref{mkestimate} to each matrix entry and conclude that these two statements remain valid in the matrix setting. Thus we obtain an $A_{\infty}$-structure on the complex of matrix-valued cochains
$C^{\bullet}(X,\textbf{M}_{l}(\textbf{k}))$.
We shall deform this to a curved $A_{\infty}$-structure using a degree $1$ element as in Proposition
\ref{ainfdeform}.

Let  $E$ be a (continuous) complex vector bundle over $X$.
As in Example \ref{ex3section1}, there is a continuous matrix idempotent
representing $E$. Observe that $\Omega^{0}(X)$, the algebra of piece-wise smooth functions on $X$, is a dense (by the Stone-Weierstrass theorem) subalgebra of the Banach algebra $C(X)$ of continuous functions on $X$,
which is closed under holomorphic functional calculus. It follows (see e.g. \cite[Chapter III, Appendix C]{Co})
that the inclusion  $\Omega^{0}(X)\hookrightarrow C(X)$ induces isomorphism of $K$-theory groups: $ K_{0}(\Omega^{0}(X))\cong K_{0}(C(X))$.
In other words, there exists a piece-wise smooth idempotent
$P \in \Omega^{0}(X,\textbf{M}_{l}(\textbf{k}))$
representing the K-theory class of $E$.

\begin{proposition}\label{exadef}
For every fine enough subdivision $\widetilde{X}$ of $X$, the series (\ref{deforma}) with $\gamma=R(PdP)$
{\rm (}where $R$ is the de Rham map for the complex $\widetilde{X}${\rm )}
define a curved $A_{\infty}$-structure on $C^{\bullet}(\widetilde{X},\textbf{M}_{l}(\textbf{k}))$.
\end{proposition}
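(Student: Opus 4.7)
The plan is to apply Proposition \ref{ainfdeform} (or more precisely, its obvious matrix-valued version) to the $A_{\infty}$-algebra $C^{\bullet}(\widetilde{X},\textbf{M}_{l}(\textbf{k}))$ with $\gamma=R(PdP)$, which lies in degree $1$ since $PdP$ is a piece-wise smooth matrix-valued $1$-form and $R$ preserves degree. Two things must be verified: the continuity of each multiplication $m_{k}$ in the $L_{2}$-norm $\|.\|$, and the pointwise norm convergence of the series (\ref{deforma}).

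The first point follows at once from the matrix extension of Lemma \ref{mkestimate}, as already noted in the paragraph preceding the proposition. Applied entrywise, Dupont's contraction and the Kontsevich-Soibelman sum over completely binary rooted trees with $k$ tails yield the uniform estimate
$$\|m_{k}(x_{1},\ldots,x_{k})\| \leqslant c_{k-1}\, E^{k}\,\|x_{1}\|\cdots\|x_{k}\|,$$
where the constant $E=E(\widetilde{X})$ absorbs the operator norms of $R$, $W$, $H$ and of the wedge product, together with the factor coming from matrix multiplication in $\textbf{M}_{l}(\textbf{k})$.

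For the second point, using the Catalan bound $c_{j}\leqslant 4^{j}$ together with $\binom{k+n}{k}\leqslant 2^{k+n}$ for the number of $(k,n)$-unshuffles in $\textbf{S}_{k+n}$, each summand in the series defining $m_{n}^{\gamma}$ is dominated by
$$c_{k+n-1}\binom{k+n}{k}E^{k+n}\|\gamma\|^{k}\prod_{i=1}^{n}\|x_{i}\| \leqslant \tfrac{1}{4}\bigl(8E\|\gamma\|\bigr)^{k+n}\prod_{i=1}^{n}\|x_{i}\|,$$
so each of the series in (\ref{deforma}) is majorized by a convergent geometric series provided
$$8\,E(\widetilde{X})\,\|R(PdP)\| \,<\, 1.$$

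The main obstacle, which the hypothesis of a sufficiently fine subdivision is designed to resolve, is arranging this numerical inequality. The norm $\|R(PdP)\|=\|WR(PdP)\|_{L_{2}}$ stays bounded under refinement, in fact converging to $\|PdP\|_{L_{2}}$ as the mesh size $h\to 0$ since $WR\to\textbf{1}$ on piece-wise smooth forms. The crux is thus a scaling estimate showing $E(\widetilde{X})\to 0$ as $h\to 0$: on each simplex of diameter $h$, Dupont's homotopy $H$ is realized as a barycentric-path integral whose operator norm in the $L_{2}$-norm is $O(h)$, while $R$, $W$ and $\wedge$ contribute factors bounded uniformly in $h$. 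Consequently $E(\widetilde{X})=O(h)$, and for any subdivision with sufficiently small mesh the inequality above holds, so Proposition \ref{ainfdeform} applies and yields the claimed curved $A_{\infty}$-structure.
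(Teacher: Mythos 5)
Your strategy is to force the \emph{global} inequality $8\,E(\widetilde X)\,\|\gamma\|<1$ by showing $E(\widetilde X)\to 0$ under refinement, and that is precisely where the argument fails. The constant $E$ cannot tend to zero: already for $k=2$ the transferred product is $m_{2}(x,y)=R(Wx\wedge Wy)$, and taking $x=y$ to be the $0$-cochain equal to $1$ at every vertex (whose Whitney form is the constant function $1$ on \emph{any} subdivision, since barycentric coordinates sum to $1$) gives $m_{2}(x,x)=x$, hence $\|m_{2}(x,x)\|=\mathrm{vol}(X)^{1/2}$ while $\|x\|^{2}=\mathrm{vol}(X)$, so $E(\widetilde X)^{2}\geqslant \mathrm{vol}(X)^{-1/2}$ for every subdivision. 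The root of the problem is your claim that the wedge product contributes a factor ``bounded uniformly in $h$'': the exterior product is not a bounded bilinear map on $L_{2}$ at all, and on the finite-dimensional spaces where it can be controlled the constants degrade under refinement (inverse estimates cost a factor of order $h^{-n/2}$), so the $O(h)$ gain from Dupont's homotopy $H$ does not make $E(\widetilde X)$ small. Since, as you yourself observe, $\|\gamma\|=\|R(PdP)\|$ stays close to the fixed positive number $\|PdP\|_{L_{2}}$, the global smallness condition cannot be arranged by subdividing, and Proposition \ref{ainfdeform} cannot be applied as stated --- indeed the paper's proof begins by noting that the analytic hypotheses of Proposition \ref{ainfdeform} are \emph{not} satisfied in this setting.

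The paper's mechanism is different: quasi-locality. The value of the cochain $m_{k}(x_{1},\ldots,x_{k})$ at a simplex $\Delta$ depends only on the restrictions of $x_{1},\ldots,x_{k}$ to the star $\textbf{St}(\Delta)$, so Lemma \ref{mkestimate} localizes to
$\|m_{k}(x_{1},\ldots,x_{k})|_{\Delta}\|\leqslant c_{k-1}E^{k}\|x_{1}|_{\textbf{St}\Delta}\|\cdots\|x_{k}|_{\textbf{St}\Delta}\|$.
Under a fine subdivision it is the \emph{local} norm $\|\gamma|_{\textbf{St}\Delta}\|$ --- not the constant $E$, and not the global norm $\|\gamma\|$ --- that becomes small, because the $L_{2}$-mass of $R(PdP)$ is spread over many small stars; once it is below $1/(4E)$ the series (\ref{deforma}) are dominated, simplex by simplex, by convergent geometric series, which is the sense in which they ``define'' the curved $A_{\infty}$-structure. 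Your estimate with the Catalan bound and the unshuffle count is fine as bookkeeping, but to close the gap you must replace the global-norm argument (and the appeal to Proposition \ref{ainfdeform}) by this local convergence argument.
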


\begin{proof}
Since the analytical hypotheses of Proposition \ref{ainfdeform} are not satisfied in this setting,
we shall show that the series (\ref{deforma}) are locally convergent in an appropriate sense.
To this end, observe that the maps $\{m_{k}\}$ are quasi-local in the sense that the value of
the cochain $m_{k}(x_{1},\ldots, x_{k})$ at a simplex $\Delta$ depends only on the values of
$x_{1},\ldots, x_{k}$ restricted to  $\textbf{St}(\Delta)$, the star of $\Delta$.
Thus Lemma \ref{mkestimate} gives
\[
\|m_{k}(x_{1},\ldots, x_{k})|_{\Delta}\| \, \leqslant \,
c_{k-1} \,
E^{k}\,\|x_{1}|_{\textbf{St}\Delta}\|\cdots\|x_{k}|_{\textbf{St}\Delta}\|\,.
\]
Using the elementary estimate  $c_{k}<4^{k}$ we obtain
\[
\|m_{p+k}(\gamma,\ldots, \gamma, x_{1},\ldots, x_{k})\| \, \leqslant \,
(4E)^{p+k}\,\|\gamma\|^{p}\,\|x_{1}\|\cdots\|x_{k}\|\,.
\]
Thus we see that if we choose a fine enough subdivision $\widetilde{X}$ of $X$
so that the norm of $\gamma$ restricted to the star of each simplex in
$\widetilde{X}$ is less than $1/(4E)$, the series (\ref{deforma}) will locally
(i.e. at each simplex) be dominated by convergent geometric series and hence will be convergent.
 \end{proof}

Finally, we briefly sketch how one can obtain an example of a curved $L_{\infty}$-structure based on
Example \ref{cdglex}.
Let $G$ be Lie group with Lie algebra $\mathfrak{g}$, $P$ a smooth principal $G$-bundle,
and $X$ a triangulation of  the total space of $P$.
Using Theorem \ref{contraction} and the main result of \cite{H2} one can transfer
the dg Lie algebra structure on
$\Omega^{\bullet}(P,\mathfrak{g})$
to an
$L_{\infty}$-structure on the complex of $\mathfrak{g}$-valued cochains $C^{\bullet}(X,\mathfrak{g})$.
Let $\alpha$ be connection $1$-form on $P$. Then one has the following analogue of Proposition \ref{exadef}.

\begin{proposition}\label{exadef2}
For every fine enough subdivision $\widetilde{X}$ of $X$, the series (\ref{deforml}) with $\gamma=R(\alpha)$
{\rm (}where $R$ is the de Rham map for the complex $\widetilde{X}${\rm )} define a curved
$L_{\infty}$-structure on
$C^{\bullet}(\widetilde{X},\mathfrak{g})$. \hfill $\Box$
\end{proposition}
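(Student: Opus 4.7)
The plan is to mirror the proof of Proposition \ref{exadef} in the $L_{\infty}$ setting, substituting the Kontsevich--Soibelman tree-sum bounds of Lemma \ref{mkestimate} by their homotopy Lie counterpart. As a first step, I would invoke Theorem \ref{contraction} together with the homological perturbation result of \cite{H2} (both of which extend entry-wise from scalars to $\mathfrak{g}$-valued forms and cochains) to obtain the transferred $L_{\infty}$-structure $\{\ell_{k}\}$ on $C^{\bullet}(X,\mathfrak{g})$, and then subdivide to $\widetilde{X}$ so that $\gamma = R(\alpha)$ is available as a degree $1$ element.

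Second, I would establish two analytic facts about the transferred operations $\ell_{k}$, parallel to what is used for the $m_{k}$ in Proposition \ref{exadef}. \emph{(a) Norm bound.} The transferred structure is given by a sum over rooted trees with $k$ leaves (symmetrized over permutations of the inputs), in which each internal vertex is decorated by the bracket on $\Omega^{\bullet}(P,\mathfrak{g})$, each internal edge by the homotopy $H$, the root by $R$ and the leaves by $W$; see the formulas of \cite{H2} (or the equivalent formulas obtained by antisymmetrizing the Kontsevich--Soibelman trees from Section~3). Since the bracket, $H$, $R$ and $W$ are bounded in the $L_{2}$-norm, each tree contributes an operator of norm at most $E^{k}$ for a constant $E$ depending only on $X$ and on the chosen metric on $P$. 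A crude count (the number of planar binary trees with $k$ tails is Catalan, and symmetrization costs at most a factor of $k!$) therefore gives an estimate of the form
\[
\|\ell_{k}(x_{1},\ldots,x_{k})\| \,\leqslant\, k!\, B^{k}\, E^{k}\, \|x_{1}\|\cdots\|x_{k}\|
\]
for some absolute $B>0$. \emph{(b) Quasi-locality.} Since the bracket on $\Omega^{\bullet}(P,\mathfrak{g})$ and the homotopy $H$ are both quasi-local, every tree contribution to $\ell_{k}(x_{1},\ldots,x_{k})|_{\Delta}$ depends only on the restrictions $x_{i}|_{\textbf{St}(\Delta)}$, exactly as in the $A_{\infty}$ case.

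Third, I would substitute these two facts into the series (\ref{deforml}) with $\gamma = R(\alpha)$. Crucially, the $L_{\infty}$-deformation series carries a factor $1/k!$ in front of $\ell_{k+n}(\gamma,\ldots,\gamma,x_{1},\ldots,x_{n})$, which absorbs the factorial growth in the estimate from step~(a). Using (a) and (b) one gets, at each simplex $\Delta$,
\[
\left\|\frac{1}{k!}\,\ell_{k+n}(\gamma,\ldots,\gamma,x_{1},\ldots,x_{n})\big|_{\Delta}\right\|
\,\leqslant\, (n+k)!\,\frac{(BE)^{n+k}}{k!}\,\|\gamma|_{\textbf{St}\Delta}\|^{k}\,\|x_{1}|_{\textbf{St}\Delta}\|\cdots\|x_{n}|_{\textbf{St}\Delta}\|,
\]
and after the elementary bound $(n+k)!/k! \leqslant (n+k)^{n}\cdot k!$ (or, more sharply, one keeps track of the full tree-count) one is reduced to controlling $\sum_{k} C^{k}\,\|\gamma|_{\textbf{St}\Delta}\|^{k}$ for a constant $C$ that is finite once $n$ is fixed. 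Choosing the subdivision $\widetilde{X}$ so fine that $\|R(\alpha)|_{\textbf{St}\Delta}\| < 1/C$ for every simplex $\Delta$ makes the series locally dominated by a convergent geometric series. Once local convergence is in place, Proposition \ref{linfdeform} (whose proof only used the formal identities (\ref{Liide}) plus norm convergence) delivers the required curved $L_{\infty}$-structure on $C^{\bullet}(\widetilde{X},\mathfrak{g})$.

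The main obstacle is the first step of (a): unlike the sharp Catalan bound in Lemma \ref{mkestimate}, the literature does not record an explicit $L_{\infty}$-tree bound in quite the form needed, so one has to either adapt the Kontsevich--Soibelman argument of Section~3 to the symmetric setting or verify directly, following the inductive construction of \cite{H2}, that the norms of $\ell_{k}$ grow at most factorially. Fortunately, the $1/k!$ in (\ref{deforml}) is so forgiving that essentially any polynomial-in-$k!$ bound suffices; this is why convergence here is qualitatively easier than in the $A_{\infty}$ case of Proposition \ref{exadef}.
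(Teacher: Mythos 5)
Your route is exactly the one the paper intends: Proposition \ref{exadef2} is left without proof precisely because it is meant to be the $L_{\infty}$-analogue of Proposition \ref{exadef}, and your steps (transfer via Theorem \ref{contraction} and \cite{H2}, a tree-sum norm bound plus quasi-locality for the transferred $\ell_{k}$, local domination by a convergent series after passing to a fine subdivision, then Proposition \ref{linfdeform}) reproduce that argument, together with the correct key observation that a factorial tree count is harmless because of the $1/k!$ in (\ref{deforml}). Two bookkeeping corrections, though. First, the elementary bound you display should be $(n+k)!/k!\leqslant (n+k)^{n}$, which is polynomial in $k$ for fixed $n$; the extra factor $k!$ you wrote would leave you with a dominating series of the form $\sum_{k}k!\,C^{k}\|\gamma|_{\textbf{St}\Delta}\|^{k}$, which diverges for every nonzero $\gamma$, so as literally stated that step does not give the claimed reduction (the fix is immediate). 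Second, it is not true that ``essentially any polynomial-in-$k!$'' growth is absorbed: only one factorial cancels, since the number $n$ of external arguments is fixed, so a bound like $(k!)^{2}B^{k}$ on $\|\ell_{k}\|$ would ruin convergence; your estimate $\|\ell_{k}\|\leqslant k!\,(BE)^{k}$, obtained by adapting Lemma \ref{mkestimate} or the inductive formulas of \cite{H2} to the symmetric setting, is exactly the level of precision you must actually verify.
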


It is clear however that this curved $L_{\infty}$-structure on $X$ does not reflect the $G$ action on $P$; perhaps a more relevant example of a
curved $L_{\infty}$-structure can be obtained by considering a $G$-equivariant triangulation of $P$ and $G$-invariant cochains.


\section{{Transferring curved dg and Lie dg structures that are deformations}}

\subsection{Review of homological perturbation theory}
In this section we recall the classical homological perturbation lemma (\cite{Gu}) and show how to use it  to transfer curved dg and Lie dg algebra structures that are deformations of ordinary ones. Given a chain complex $(C,d)$, we say that a map $\delta: C^{\bullet}\rightarrow C^{\bullet +1}$ is a {\bf perturbation} of $d$ if $(d+\delta)^{2}=0$.

\begin{definition}\label{contr} Assume that we are given two chain complexes $(C_{1},d_{1})$ and $(C_{2},d_{2})$ and chain maps $p: C_{1} \rightarrow C_{2}$ and
$i: C_{2} \rightarrow C_{1}$. We say that the pair $(p,i)$ is a {\bf chain contraction} of $(C_{1},d_{1})$ to $(C_{2},d_{2})$ if $pi=\textbf{1}_{C_{1}}$ and there exists a homotopy between $ip$ and the identity map on $C_{1}$, i.e. a map $H:C_{1}^{\bullet}\rightarrow C_{1}^{\bullet -1}$ such that $ip-\textbf{1}_{C_{1}}=d_{1}H+Hd_{1}$. We say that  the contraction $(p,i)$ is  {\bf special} if the following so called annihilation conditions hold.
\[Hi=0, \quad pH=0, \quad H^{2}=0.\]
\end{definition}
It is shown in  \cite{G} that the chain contraction described in Theorem \ref{contraction} is special.

The first part of the following statement is known as the main homological perturbation lemma. The second part is proved in \cite[Section 2]{HK}.
\begin{theorem}\label{pertlemma}
$(a)$ Let $(C_{1},d_{1})$ be a chain complex and $\delta_{1}$ be a perturbation of $d_{1}$.
Suppose that we are given a special chain contraction $(p,i,H)$ between $(C_{1},d_{1})$
and a chain complex $(C_{2},d_{2})$.
Assume further that $\textbf{1}_{C_{1}}-\delta_{1} H$ is invertible and set
$\Sigma = (\textbf{1}_{C_{1}}-\delta_{1} H)^{-1}\delta_{1}$.
Then $\delta_{2}=p\Sigma i$ is a perturbation of $d_{2}$ and the formulas
\be{mainpert}
\widetilde{i} \, = \, i+H\Sigma i, \quad \widetilde{p}=p+p\Sigma H, \quad \widetilde{H}=H+H\Sigma H
\ee
define a special chain contraction between $(C_{1},d_{1}+\delta_{1})$ and $(C_{2}, d_{2}+\delta_{2})$.

$(b)$ Assume, in addition to the hypotheses in part $(a)$, that $(C_{1},d_{1})$
is dg {\rm (}co{\rm )}al\-ge\-b\-ra,
$\delta_{1}$ is a {\rm (}co{\rm )}derivation, and $p$ and $i$ are {\rm (}co{\rm )}algebra homorphisms.
Then $\delta_{2}$ is a coderivation and the maps $\widetilde{p}$ and $\widetilde{i}$ are
{\rm (}co{\rm )}algebra homomorphisms.
\end{theorem}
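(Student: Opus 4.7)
The plan is to reduce every claim to algebraic manipulations involving the single auxiliary operator $\Sigma$ and its geometric series expansion
\[
\Sigma \, = \, \sum_{n \, = \, 0}^{\infty}(\delta_{1}H)^{n}\delta_{1} \, = \, \delta_{1}+\delta_{1}H\Sigma \, = \, \delta_{1}+\Sigma H \delta_{1} \,.
\]
These two recursions, together with the chain contraction data $pi=\textbf{1}$, $ip-\textbf{1}=d_{1}H+Hd_{1}$, the annihilation conditions $Hi=0$, $pH=0$, $H^{2}=0$, the chain-map property of $i$ and $p$, and the perturbation identity $(d_{1}+\delta_{1})^{2}=0$, form the entire toolkit; everything else is bookkeeping.

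For part $(a)$, the order in which I would carry out the verifications is as follows. First, the three annihilation conditions for $\widetilde{H}$, which reduce instantly to $H^{2}=0$, $Hi=0$, $pH=0$ once the $\Sigma$'s are sandwiched between the appropriate $H$'s. Next, the identity $\widetilde{p}\widetilde{i}=\textbf{1}_{C_{2}}$, whose expansion $(p+p\Sigma H)(i+H\Sigma i)$ leaves only the term $pi$ surviving, thanks to $pH=0$, $Hi=0$ and $H^{2}=0$. Then the contraction identity for $\widetilde{H}$, obtained by expanding $(d_{1}+\delta_{1})\widetilde{H}+\widetilde{H}(d_{1}+\delta_{1})$, regrouping, and applying the recursions $\delta_{1}+\delta_{1}H\Sigma=\Sigma$ and $\delta_{1}+\Sigma H\delta_{1}=\Sigma$ to absorb the resulting $\delta_{1}H\Sigma$ and $\Sigma H\delta_{1}$ composites. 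Finally, the chain-map properties $\widetilde{p}(d_{1}+\delta_{1})=(d_{2}+\delta_{2})\widetilde{p}$ and $(d_{1}+\delta_{1})\widetilde{i}=\widetilde{i}(d_{2}+\delta_{2})$ unfold in the same manner. The perturbation condition $(d_{2}+\delta_{2})^{2}=0$ is then immediate, since $(d_{2}+\delta_{2})^{2}=\widetilde{p}(d_{1}+\delta_{1})^{2}\widetilde{i}=0$.

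For part $(b)$, I would proceed term by term on the geometric series for $\Sigma$. Since $\delta_{1}$ is a coderivation and $i$, $p$ are coalgebra maps, each building block $p(\delta_{1}H)^{n}\delta_{1}i$ entering $\delta_{2}=p\Sigma i$ contributes to a coderivation identity for $\delta_{2}$, provided the homotopy $H$ satisfies an extra coalgebra compatibility of comodule-homotopy type, e.g.~$\Delta H=(H\otimes ip+\textbf{1}\otimes H)\Delta$ (dualized in the algebra case). Granting this, the same bookkeeping applied to $\widetilde{i}=i+H\Sigma i$ and $\widetilde{p}=p+p\Sigma H$, with the recursion for $\Sigma$ used to collapse intermediate terms, shows these are coalgebra homomorphisms. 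The main obstacle is precisely the verification of this extra side condition on $H$: it is not part of the bare definition of a special chain contraction in Definition \ref{contr} and must either be added as a hypothesis or verified by hand in each situation of interest. This is the substantive content absorbed into the reference to \cite{HK}, and it is where the real work of part $(b)$ lies; without it the tower of terms $(\delta_{1}H)^{n}\delta_{1}$ cannot be re-assembled into a coderivation on $C_{2}$.
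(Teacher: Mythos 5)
You should first note that the paper contains no proof of this theorem: part (a) is quoted as the classical perturbation lemma \cite{Gu} and part (b) is delegated to \cite[Section 2]{HK}, so your proposal has to be measured against the standard arguments in those references rather than against an in-text argument. For part (a) your outline is the standard direct verification and is correct as far as it goes: the side conditions of Definition \ref{contr} make the annihilation identities for $\widetilde{H}$ and the identity $\widetilde{p}\,\widetilde{i}=\textbf{1}$ immediate, the homotopy and chain-map identities do follow by expanding and using the two recursions $\Sigma=\delta_{1}+\delta_{1}H\Sigma=\delta_{1}+\Sigma H\delta_{1}$ together with $(d_{1}+\delta_{1})^{2}=0$, and deducing $(d_{2}+\delta_{2})^{2}=\widetilde{p}(d_{1}+\delta_{1})^{2}\widetilde{i}=0$ is legitimate once $\widetilde{p}\,\widetilde{i}=\textbf{1}$ and the two chain-map identities are in place. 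The only reservation is that the homotopy identity is the bulk of the computation and you merely assert it, but the toolkit you list is exactly what is needed, so this is an acceptable sketch.

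For part (b) you do not prove the statement as printed; instead you claim that a coproduct compatibility of $H$, e.g.\ $\Delta H=(H\otimes ip+\textbf{1}\otimes H)\Delta$, must either be assumed or verified. This diagnosis is substantively correct: from the stated hypotheses (side conditions, $p,i$ (co)algebra maps, $\delta_{1}$ a (co)derivation) nothing constrains $\Delta H$, and already the quadratic term $p\,\delta_{1}H\delta_{1}\,i$ of $\delta_{2}$ cannot be shown to be a coderivation without such a constraint, whereas with your extra condition the side conditions $pH=0$, $Hi=0$ kill all cross terms and the verification closes up. This is also precisely why the paper's use of the theorem is safe: the tensor-trick homotopy $T(H)$ of Eq.~(\ref{tensortrick}) employed in Proposition \ref{transa} satisfies exactly this comodule-type compatibility with the deconcatenation coproduct, and the setting of \cite{HK} (cofree, filtered coalgebras of bar type) is the one in which the coalgebra perturbation lemma is actually established. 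So, measured against the literal statement, your part (b) is incomplete—you have not derived the conclusion from the stated hypotheses—but the missing ingredient you identify is genuine, lies in the generality of the statement rather than in your argument, and, once verified for $T(H)$ as above, yields a self-contained proof of the version of (b) that the paper actually needs.
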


\subsection{Transferring curved dg and Lie dg algebra structures that are deformations of ordinary ones}

\begin{proposition}\label{transa}
Let $A$ be dg algebra equipped with a Banach norm such that the multiplication map is continuous.
Let $\gamma$  be a degree 1 element of $A$  and put $\widehat{\gamma}(x)=[\gamma, x]$ for $x \in A$. Let $A^{\gamma}$
denote the curved dg algebra obtained from $A$ via $\gamma$ as in Example \ref{ex1section1}.
Let $(p,i,H)$ be a special contraction from $A$ to a chain complex $B$ which is a Banach space,
such that $H$ is continuous and $p$ and $i$ are continuous with norms not exceeding 1.
Assume further that $\|\widehat{\gamma}\| \, \|H\|<1$. Then there exists a curved $A_{\infty}$-structure on $B$
and morphisms of curved $A_{\infty}$-algebras $P:A^{\gamma} \rightarrow B$ and $I:B \rightarrow A^{\gamma}$.
\end{proposition}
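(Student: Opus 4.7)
My plan is to apply part (b) of the main homological perturbation lemma (Theorem \ref{pertlemma}) at the level of tensor coalgebras. By Proposition \ref{altdefa}, the curved dg algebra $A^\gamma$ corresponds to a square-zero coderivation $D$ of degree $1$ on $\text{T}(\text{s}A)$; decomposing according to the structure of $A^\gamma$, I would write $D = d^{\otimes} + \delta$, where $d^{\otimes}$ is the coderivation extending the differential $d$ of $A$ and $\delta$ is the sum of three pieces: the coderivation $\mu^{\otimes}$ coming from the multiplication on $A$ (arity two), the coderivation $\widehat\gamma^{\otimes}$ induced by $\widehat\gamma:A\to A$ (arity one), and the coderivation $m_0^{\otimes}$ induced by the curvature $m_0 = d\gamma + \gamma\cdot\gamma \in A^2$ (arity zero).

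The next step is to lift the given special contraction $(p,i,H)$ to a special contraction $(P_0, I_0, \mathcal{H})$ between the dg coalgebras $(\text{T}(\text{s}A), d^{\otimes})$ and $(\text{T}(\text{s}B), d_B^{\otimes})$. Setting $P_0 = \text{T}(\text{s}p)$ and $I_0 = \text{T}(\text{s}i)$ gives dg coalgebra homomorphisms, while $\mathcal{H}$ is assembled componentwise out of $H$, $ip$, and identity maps via the standard tensor trick, in such a way that the three annihilation conditions on the chain level lift to their coalgebra analogues. Viewing $\delta$ as a coderivation perturbation of $d^{\otimes}$, Theorem \ref{pertlemma}(b) then produces a coderivation perturbation $\delta_B$ of $d_B^{\otimes}$ whose sum with $d_B^{\otimes}$ is square-zero; by Proposition \ref{altdefa} this gives the desired curved $A_\infty$-structure on $B$. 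The transferred maps $\widetilde P = P_0 + P_0 \Sigma \mathcal{H}$ and $\widetilde I = I_0 + \mathcal{H} \Sigma I_0$ are dg coalgebra homomorphisms, which by definition provide the morphisms of curved $A_\infty$-algebras $P:A^\gamma \to B$ and $I:B \to A^\gamma$.

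The hardest part will be the analytic verification that $\mathbf{1}_{\text{T}(\text{s}A)} - \delta\mathcal{H}$ is invertible, so that $\Sigma = (\mathbf{1}-\delta\mathcal{H})^{-1}\delta$ is well-defined and the component maps $m_k^B$ extracted from $\delta_B$ are bounded. The three summands of $\delta$ interact differently with the tensor filtration: $\mu^{\otimes}$ strictly lowers tensor degree, so on any fixed tensor power its iterates terminate after finitely many steps; $m_0^{\otimes}$ strictly raises tensor degree and is controlled in norm by $\|d\gamma\| + \|\gamma\|^2$; and $\widehat\gamma^{\otimes}$ preserves tensor degree, contributing the only genuinely infinite geometric sums in the Neumann series $\sum_n (\delta\mathcal{H})^n$. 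The hypothesis $\|\widehat\gamma\|\,\|H\|<1$ is precisely the estimate needed to dominate these degree-preserving chains by a convergent geometric series, once the annihilation relations $Hi=0$, $pH=0$, $H^2=0$ are used to discard the terms of $\widehat\gamma^{\otimes}\mathcal{H}$ that would otherwise spoil the bound; combined with the Catalan-type tree estimate for the binary contributions (in the spirit of Lemma \ref{mkestimate}) and the norm bound on $m_0$ for the nullary ones, this yields a convergent dominating series on each tensor component and the required invertibility.
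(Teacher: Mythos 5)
Your proposal is correct and follows essentially the same route as the paper's proof: pass to the tensor-coalgebra picture via Proposition \ref{altdefa}, writing the coderivation encoding $A^{\gamma}$ as a perturbation $\delta_{A}$ of $T(d_{A})$, lift the special contraction by the tensor trick (\ref{tensortrick}), use $\|\widehat{\gamma}\|\,\|H\|<1$ to invert $\mathbf{1}-\delta_{A}T(H)$, and apply Theorem \ref{pertlemma} to obtain the coderivation $\delta_{B}$ (hence the curved $A_{\infty}$-structure on $B$) together with the coalgebra morphisms giving $P$ and $I$. Your treatment of the convergence, separating the arity $0$, $1$ and $2$ parts of the perturbation, is if anything more detailed than the paper's, which simply asserts that the Neumann series $\sum_{k}(\delta_{A}T(H))^{k}$ is dominated by a geometric series with ratio $\|\widehat{\gamma}\|\,\|H\|$.
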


\begin{proof}
According to Proposition \ref{altdefa}, the curved dg algebra structure on $A$ obtained via $\gamma$
is equivalent to a coderivation $D_{A}$ on $T(sA)$ such that $D_{A}^{2}=0$.
Let $T(d_{A})$ denote the usual extension of the differential $d_{A}$ on $A$ to $T(sA)$.
It follows from the proof of Proposition \ref{altdefa}
that we can write $D_{A}$ as a perturbation of $T(d_{A})$: $D_{A}=T(d_{A})+\delta_{A}$.
Since $T(d_{A})$ and $D_{A}$ are coderivations, so is $\delta_{A}$.

Now let us  define maps $T(p):T(sA) \rightarrow T(sB)$,
$T(i):T(sB) \rightarrow T(sA)$ and $T(H):T(sA) \rightarrow T(sA)$ by setting for every $n \geqslant 1$:
\be{tensortrick}
(T(p))_{n}=p^{\otimes n}, \quad (T(i))_{n}=i^{\otimes n}, \quad
(T(H))_{n}=\sum _{k+j+1=n}\textbf{1}^{\otimes k}\otimes H \otimes (ip)^{\otimes j}.
\ee
(The last formula is a particular case of the so-called tensor trick, see \cite{H1} and the references therein.) It is easy to check that (\ref{tensortrick}) defines a special contraction from $T(sA)$ to $T(sB)$ and that $T(p)$ and $T(i)$ are coalgebra morphisms. The assumption $\|\widehat{\gamma}\| \, \|H\|<1$ implies that the series
\[
\sum _{k=0}^{\infty}(\delta_{A} T(H))^{k}
\]
is dominated by a convergent geometric series. Thus $1-\delta_{A} T(H)$ is invertible and we can apply
Theorem \ref{pertlemma} to the contraction (\ref{tensortrick}).
As a result, we obtain a coderivation $\delta_{B}$ on $T(sB)$, which is a perturbation of $T(d_{B})$ and hence
defines a curved $A_{\infty}$-structure on $B$, and also curved $A_{\infty}$-algebra morphisms between
$A^{\gamma}$ and $B$ given by (\ref{mainpert}).
\end{proof}

We note that the case of an ordinary dg algebra in the proposition above has already been considered
in \cite{H1} where it is also shown that the formulas for the transferred $A_{\infty}$-structure obtained
from the perturbation lemma coincide with the sum of over rooted tree formulas obtained in \cite{KS}.
We will now generalize these formulas to the case of a transfer of a curved dg structure.

Recall that a rooted planar tree has a natural orientation of its edges, which we use to define the
{\bf valency} of a vertex of such a tree to be the number of edges going out from the vertex.
A {\bf tail} is a vertex of valency zero.
A {\bf completely binary} tree is rooted planar tree whose root has valency 1
and all other (non-tail) vertices have valency 2. Let us denote by $\Upsilon _{n}$
the set of all completely binary rooted planar trees with $n$ tails.
Define further $\Upsilon _{n}^{dec}$ to be the set whose elements consist of a tree in $\Upsilon _{n}$
and a sequence of arbitrary non-negative integers, one for each edge of the tree.
Assuming the hypotheses of Proposition \ref{transa}, we can think of an element in $\Upsilon _{n}^{dec}$
as a tree in $\Upsilon _{n}$ decorated according to the following rules:
\begin{LIST}{25pt}
\item[(1)] Assign to the root the map $p$ and to each tail the map $i$.
\item[(2)] Assign to each vertex of valency 2 the map $m_{2}^{A}$.
\item[(3)] Assign to the edge $e$ containing the root the map $(\widehat{\gamma}  H)^{k_{e}}$, $k_{e}=0,1,2,\ldots$
\item[(4)] Assign to each edge containing a tail $e$ the map $(H  \widehat{\gamma})^{k_{e}}$, $k_{e}=0,1,2,\ldots$
\item[(5)] Assign to each interior edge $e$ the map $H (\widehat{\gamma}H)^{k_{e}}$, $k_{e}=0,1,2,\ldots$
\end{LIST}
Now to each element $\tau$ in $\Upsilon _{n}^{dec}$ we can assign an operation $m_{\tau}: B^{\otimes n} \rightarrow B$ by composing the maps assigned to the
vertices and the edges of $\tau$ in the natural order. For example, to the tree
\[
\xygraph{
!{<0cm,0cm>;<2.21cm,0cm>:<0cm,0.85cm>::}
!{(0,2)}*{\Blt}="0"
!{(0,1)}*{\Blt}="1"
!{(0.4,0)}*{\Blt}="3"
!{(-0.6,-1)}*{\Blt}="6"
!{(0,-1)}*{\Blt}="7"
!{(0.8,-1)}*{\Blt}="9"
!{(-0.01,2.25)}*{\text{\small $p$}}
!{(-0.61,-1.25)}*{\text{\small $i$}}
!{(-0.01,-1.25)}*{\text{\small $i$}}
!{(0.79,-1.25)}*{\text{\small $i$}}
!{(0.18,1)}*{\text{\small $m_2$}}
!{(0.58,0)}*{\text{\small $m_2$}}
!{(0.29,1.59)}*{\text{\footnotesize $(\widehat{\gamma}  H)^{2}$}}
!{(0.57,0.52)}*{\text{\footnotesize $H  \widehat{\gamma}  H$}}
!{(0.94,-0.48)}*{\text{\footnotesize $(H  \widehat{\gamma})^{4}$}}
!{(-0.6,0)}*{\text{\footnotesize $(H  \widehat{\gamma})^{3}$}}
!{(0.08,-0.48)}*{\text{\footnotesize  $1$}}
"0"-"1" "1"-"3" "1"-"6" "3"-"7" "3"-"9"
}\]
there corresponds the map
\[
p  (\widehat{\gamma}  H)^{2}  m_{2}^{A} ((H  \widehat{\gamma})^{3} i \otimes
H  \widehat{\gamma}  H  m_{2}^{A}  ( i \otimes (H  \widehat{\gamma})^{4}) i)
.
\]

\medskip

\begin{proposition}\label{expform}
The transferred curved $A_{\infty}$-structure on $B$ obtained in Proposition \ref{transa} is given by
\be{formo}
m_{0}^{B}= \sum _{k=0}^{\infty}p  (\widehat{\gamma}  H)^{k} (m_{0}^{A}),\ee
\be{formone} m_{1}^{B}=p  d_{A}  i + \sum _{k=0}^{\infty} p  \widehat{\gamma} (\widehat{\gamma}  H)^{k}  i,\ee
\be{formn}
m_{n}^{B}=\sum_{\tau \in \Upsilon _{n}^{dec}} (\pm) m_{\tau}, \quad n>1.
\ee
\end{proposition}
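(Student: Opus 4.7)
The plan is to apply the homological perturbation Theorem \ref{pertlemma} to the tensor contraction $(T(p), T(i), T(H))$ constructed in the proof of Proposition \ref{transa}, expand the resulting Neumann series, and identify each term combinatorially. Concretely, I would begin by writing
\[
\delta_{B} \, = \, T(p) \, \Sigma \, T(i) \, = \, \sum_{k \, \geqslant \, 0} T(p) \, (\delta_{A} \, T(H))^{k} \, \delta_{A} \, T(i),
\]
and then decompose the perturbation as $\delta_{A} = D^{m_{0}} + D^{\widehat{\gamma}} + D^{m_{2}}$, where $D^{m_{j}}$ is the coderivation on $T(sA)$ extending the operation $m_{j}^{A}$ of $A^{\gamma}$ from Example \ref{ex2section1}.

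After this decomposition, every summand is a composition of elementary operations from $\{m_{0}, \widehat{\gamma}, m_{2}\}$ separated by the homotopy maps $T(H)$ and bracketed by $T(i), T(p)$. By formula (\ref{tensortrick}), applying $T(H)$ at tensor degree $n$ is a sum of $n$ terms, each placing a single $H$ in one slot with the idempotent $ip$ filling the slots to its right. Each such composition is therefore naturally indexed by a rooted planar tree in which valency-$2$ internal vertices carry $m_{2}^{A}$, edge decorations record the $\widehat{\gamma}$-insertions, and $m_{0}^{A}$-insertions create tail-free branches. I would then invoke the special contraction identities $H i = 0$, $p H = 0$, $H^{2} = 0$ to kill every tree in which $H$ is adjacent to $i$, $p$, or another $H$, and to collapse the remaining $ip$ factors. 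This forces the edges to carry exactly the alternating chains $(\widehat{\gamma} H)^{k_{e}}$ on the root edge, $(H \widehat{\gamma})^{k_{e}}$ on the tail edges and $H (\widehat{\gamma} H)^{k_{e}}$ on the interior edges.

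Collecting the surviving configurations by arity then yields the three cases of the proposition. For $n > 1$ the surviving trees are completely binary, which gives (\ref{formn}), the signs $(\pm)$ being read off from the Koszul convention built into the coderivation extensions. For $n = 0$ the first $\delta_{A}$ must be $D^{m_{0}}$ (there being no inputs), and the remaining compositions reduce to the linear chains of (\ref{formo}). For $n = 1$ the arity-preserving part of $T(d_{B}) = T(p d_{A} i)$ contributes $p d_{A} i$, while the arity-preserving component of $\delta_{B}$ produces the linear chains $p \widehat{\gamma} (\widehat{\gamma} H)^{k} i$ in (\ref{formone}).

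The hard part will be ruling out the spurious contributions with branches terminating in $m_{0}^{A}$ that a priori appear for $n \geqslant 1$: for example, in the arity-$1$ case the composition $D^{m_{2}} \circ T(H) \circ D^{m_{0}}$ applied to $i(x)$ yields a term of the shape $p[H(m_{0}^{A}), i(x)]$ which is not annihilated by the special contraction conditions alone. Handling such terms will require combining the opposite-sign contributions coming from the two Koszul-insertion positions of $m_{0}^{A}$ with the defining identity $m_{0}^{A} = d \gamma + \gamma^{2}$, the homotopy identity $d_{A} H + H d_{A} = ip - \mathbf{1}$, and the curvature relation $(d_{A} + \widehat{\gamma})^{2} = \widehat{m_{0}^{A}}$ established in Section 2, so that every such $m_{0}$-branch is re-expressed as a $\widehat{\gamma}$-chain already counted by the linear and binary tree sums. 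Once this sign-and-identity bookkeeping is carried out, the identification of the transferred structure with the sum over $\Upsilon_{n}^{dec}$ is immediate.
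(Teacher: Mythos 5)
Your overall strategy --- expanding $\delta_{B}=\sum_{k\geqslant 0}T(p)\,(\delta_{A}T(H))^{k}\,\delta_{A}\,T(i)$, splitting $\delta_{A}$ into its arity $0$, $1$, $2$ parts and organizing the surviving compositions by decorated trees --- is the direct-expansion counterpart of what the paper does. The paper instead rewrites the perturbation lemma as the recursions $\delta_{B}=T(p)\delta_{A}I$, $I=T(i)+T(H)\delta_{A}I$, derives componentwise recursions $I_{1}=i+H\widehat{\gamma}I_{1}$ and (\ref{formIn}), and obtains (\ref{formn}) by substitution, following Berglund, with the sign computed explicitly as $\epsilon(\tau)=\sum_{v}l_{v}(s_{v}-1)$ (which you leave unspecified). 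For the part of the expansion built only from $\widehat{\gamma}$, $m_{2}^{A}$ and the side conditions, the two bookkeepings agree.

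The gap is precisely the point you flag and then defer. The compositions in which $D^{m_{0}}$ inserts the curvature and a later $D^{m_{2}}$ absorbs it do survive the annihilation conditions: your example at arity $1$ and quadratic order in $\delta_{A}$ is correct and has the shape $p\,m_{2}^{A}(Hm_{0}^{A}\otimes i(x))\pm p\,m_{2}^{A}(i(x)\otimes Hm_{0}^{A})$, which is not of the form occurring in (\ref{formone}) or among the decorations of $\Upsilon_{n}^{dec}$. Your proposed remedy --- cancel such terms using $m_{0}^{A}=d\gamma+\gamma^{2}$, $d_{A}H+Hd_{A}=ip-\mathbf{1}$ and $(d_{A}+\widehat{\gamma})^{2}=\widehat{m_{0}^{A}}$ --- is only announced, not carried out, and it cannot work as a cancellation at fixed order: at that arity and order this is the \emph{only} curvature-insertion term in the expansion, so there is nothing of the same shape to cancel against, and the identities you list merely rewrite it rather than convert it into a chain $p\,\widehat{\gamma}(\widehat{\gamma}H)^{k}i$; you give no argument that a resummation across orders does the job either. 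So as written your argument establishes the stated formulas only for the $m_{0}$-free part of the Neumann series. The paper's proof never meets these terms because its recursions for $I_{n}$ and $m_{n}^{B}$ with $n\geqslant 1$ involve only $\widehat{\gamma}$ and $m_{2}^{A}$; on your route the burden is exactly to prove that the curvature-insertion contributions to the arity-$n$ components vanish or resum into the displayed formulas, and that step is missing. Either supply that argument or switch to the recursive derivation via $I_{1}$, (\ref{formIn}) and (\ref{forminm}) as in the paper.
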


\begin{proof} Formulas (\ref{formo}) and (\ref{formone}) follow easily from Proposition \ref{transa}. We shall verify (\ref{formn}) closely following \cite{Be2}. We first observe that the transfer formulas from Theorem \ref{pertlemma} can be rewritten as recursive formulas:
\[\begin{array}{rl}\delta_{B}=T(p)\delta_{A}I \hspace{50pt} & I=T(i)+T(H)\delta_{A}I\\
 P=T(p)+P\delta_{A}T(H) \hspace{50pt} & \widetilde{H}=T(H)+\widetilde{H}\delta_{A}T(H).\end{array}\]

Using this, we can derive an inductive formula for the components of the curved $A_{\infty}$-morphism $I:B
\rightarrow A^{\gamma}$. Proceeding as in the proof of Theorem 2.1 in \cite{Be2} we find that
\[ I_{1}=i+ H  \widehat{\gamma}  I_{1}.\]
Solving for $I_{1}$ and expanding $(1-H  \widehat{\gamma})^{-1}$ in geometric series we obtain
\be{eqIone}I_{1}=\sum_{k=0}^{\infty}(H  \widehat{\gamma})^{k}  i.
\ee
Next, for every $n>1$ we find
\[I_{n}=H  \widehat{\gamma} I_{n}+\sum_{l+s=n}(-1)^{l(s-1)} H  m_{2}^{A}(I_{l},I_{s}),\]
which implies the desired recursive formula:
\be{formIn}I_{n}=\sum _{k=0}^{\infty}\,\sum_{\,\, l+s=n}(-1)^{l(s-1)}H (\widehat{\gamma}  H)^{k} m_{2}^{A}(I_{l},I_{s}),
\ee

Similarly one shows that
\[m_{n}^{B}=p  \widehat{\gamma} I_{n}+\sum_{l+s=n} (-1)^{l(s-1)}  m_{2}^{A}(I_{l},I_{s}),\]
and substituting (\ref{formIn}) in the last expression gives
\be{forminm}m_{n}^{B}=\sum _{k=0}^{\infty}\,\sum_{\,\, l+s=n}(-1)^{l(s-1)}p (\widehat{\gamma}  H)^{k} m_{2}^{A}(I_{l},I_{s}).
\ee
Now substituting (\ref{eqIone}) and  (\ref{formIn}) into (\ref{forminm}) one obtains (see \cite{Be2} and \cite{M})
$$m_{n}^{B}=\sum_{\tau \in \Upsilon _{n}^{dec}} (-1)^{\epsilon(\tau)} m_{\tau},$$

where $\epsilon(\tau)$ is defined as follows. Let $v$ be a vertex of $\tau$ which is not a tail and not the root. Let $l_{v}$ be the
number of tails $t$ of $\tau$ such that the unique path from $t$ to the root of $\tau$ contains the first output edge of $v$ and let
$s_{v}$ be the
number of tails $t$ of $\tau$ such that the unique path from $t$ to the root of $\tau$ contains the second output edge of $v$. Then define
$\epsilon(\tau)=\sum_{v}l_{v}(s_{v}-1)$ (the sum is over all vertices $v$ which are neither a tail nor the root).

\end{proof}

\begin{remark}\label{formor} Clearly (\ref{formIn}) implies a sum over rooted trees formula for the components of $I$ similar to (\ref{formn}); the only difference is that we assign $H$ instead of $p$ to the roots of the trees. One can also derive a similar formula for the morphism $P:A^{\gamma} \rightarrow B$  as in \cite{Be2}. In this case each tree in the sum has $H$ assigned to {\em one} of its tails.
\end{remark}

We shall need the additional properties of the transferred curved  $A_{\infty}$-structure described in the following three lemmas in the next section.

\begin{lemma}\label{strictmor}
Let $(p_{k},i_{k},H_{k})$ be a special contraction from a dg algebra $A_{k}$ to a chain complex $B_{k}$, $k=1,2$, satisfying the hypotheses of Proposition \ref{transa}. Let $\gamma_{k} \in A_{k}$ be degree 1 elements and let $\psi: A_{1}^{\gamma_{1}} \rightarrow A_{2}^{\gamma_{2}}$ be a strict morphism of curved dg algebras such that
\be{strictcommute}
\psi  H_{1}=H_{2}  \psi
\ee
Consider $B_{k}$, $k=1,2$, with the transferred curved $A_{\infty}$-structure obtained in Proposition \ref{transa} and let $P_{k}=\{P_{k,l}\}_{l=1}^{\infty}$ and $I_{k}=\{I_{k,l}\}_{l=1}^{\infty}$ be the corresponding curved $A_{\infty}$-algebra morphisms.
Then the morphism $P_{2}  \psi  I_{1}: B_{1} \rightarrow B_{2}$ is also strict.
\end{lemma}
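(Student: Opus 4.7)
The plan is to prove $(P_2 \circ \psi \circ I_1)_n = 0$ for all $n \geq 2$, which is the definition of strictness. By the composition formula (\ref{morphcomp}) and the strictness of $\psi$ (which forces only $\psi_1 = \psi$ to contribute, so every inner tensor slot in the composition must equal 1), this $n$-th component equals
\[
\sum_{q \,=\, 1}^{n}\ \sum_{i_1 + \cdots + i_q \,=\, n}(\pm)\,P_{2,q}\bigl(\psi \circ I_{1,i_1},\ \ldots,\ \psi \circ I_{1,i_q}\bigr).
\]

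The crux is the following Key Lemma: $H_2 \circ \psi \circ I_{1,m} = 0$ as a map $B_1^{\otimes m} \to A_2$ for every $m \geq 1$. For $m \geq 2$, the recursive formula (\ref{formIn}) together with Remark \ref{formor} shows that every tree representing $I_{1,m}$ has $H_1$ at its root, so $I_{1,m} = H_1 \circ J_m$ for some $J_m$; by (\ref{strictcommute}) then $\psi I_{1,m} = H_2 \psi J_m$, and hence $H_2 \psi I_{1,m} = H_2^2 \psi J_m = 0$. For $m = 1$, formula (\ref{eqIone}) reads $I_{1,1} = \sum_{k \geq 0}(H_1 \widehat{\gamma_1})^k i_1$: the $k = 0$ summand gives $H_2 \psi i_1 = \psi H_1 i_1 = 0$ upon combining (\ref{strictcommute}) with $H_1 i_1 = 0$, while each $k \geq 1$ summand begins with $H_1$ so the same $H_2^2 = 0$ cancellation applies.

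With the Key Lemma in hand, the rest is short. For $q \geq 2$, Remark \ref{formor} presents $P_{2,q}$ as a sum over trees each of which assigns $H_2$ to one specific tail; plugging $\psi \circ I_{1,i_{j_0}}$ into that tail, the tree's contribution begins with $H_2 \circ \psi \circ I_{1,i_{j_0}} = 0$, so every such summand vanishes. For $q = 1$ and $n \geq 2$ the remaining task is to show $P_{2,1}(\psi I_{1,n}) = 0$; from the perturbation lemma, $P_{2,1}$ arises from $T(p_2)(1 - \delta_{A_2} T(H_2))^{-1}$ restricted to $T^1(sA_2) \to T^1(sB_2)$, and in the geometric expansion the $k = 0$ term is $p_2$ while every $k \geq 1$ term has $H_2$ (the rightmost $T(H_2)|_{T^1}$) as its innermost operation on the argument. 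Hence $P_{2,1}$ vanishes on $\mathrm{im}(H_2)$ by the annihilation identities $p_2 H_2 = 0$ and $H_2^2 = 0$; since $\psi I_{1,n} \in \mathrm{im}(H_2)$ for $n \geq 2$ (via the factorization $I_{1,n} = H_1 J_n$), this completes the argument.

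The principal obstacle is the $m = 1$ case of the Key Lemma. Because $I_{1,1}$ does not uniformly factor through $H_1$, the identity-at-root piece $i_1$ must be killed by a distinct mechanism; this is precisely where the intertwining $\psi H_1 = H_2 \psi$ combines with $H_1 i_1 = 0$ to yield $H_2 \psi i_1 = \psi H_1 i_1 = 0$.
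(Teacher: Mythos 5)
Your proof is correct and follows essentially the same route as the paper's: expand the components of $P_{2}\,\psi\, I_{1}$ via (\ref{morphcomp}) and the tree/recursion formulas for $I_{1}$ and $P_{2}$, then annihilate every term with $n\geqslant 2$ using $\psi H_{1}=H_{2}\psi$ together with the annihilation conditions of Definition \ref{contr}. Your explicit handling of the leading term of $I_{1,1}$ (via $H_{2}\psi i_{1}=\psi H_{1} i_{1}=0$) is in fact a slightly more careful account of the step the paper summarizes by saying all terms contain $H_{2}\psi H_{1}$ or $p_{2}\psi H_{1}$.
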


\begin{proof} We compute the components of $P_{2}  \psi  I_{1}$ using (\ref{morphcomp}) and Remark \ref{formor} and find that all terms contain compositions either of the form  $H_{2}  \psi  H_{1}$ or of the form $ p_{2}  \psi  H_{1}$. Now the claim follows from (\ref{strictcommute}), using the annihilation conditions stated in Definition \ref{contr}.
\end{proof}

\begin{lemma}\label{cyclsym} Assume, in addition to the  hypotheses of Proposition \ref{transa}, that for some $l>0$ we have
\[A=\widetilde{A} \otimes \textbf{M}_{l}(\textbf{k}),\quad B=\widetilde{B} \otimes \textbf{M}_{l}(\textbf{k}),\] where $\widetilde{A}$ is a commutative dg algebra,
and that the contraction between $A$ and $B$ is induced by a contraction between $\widetilde{A}$ and $\widetilde{B}$. Suppose further that  $x_{1},\ldots,x_{n} \in B$ are of even degree and $n$ is odd and greater than 1. Then the transferred curved $A_{\infty}$-structure on $B$  satisfies
\be{cycsym}\sum_{\sigma - \text{cyclic} }\text{Tr}(m_{n}^{B}(x_{\sigma(1)},\ldots, x_{\sigma(n)}))=0,\ee
where the sum is over all cyclic permutations and Tr denotes the matrix trace.
The same identity holds for the odd components of the morphism $I:B \rightarrow A^{\gamma}$  given by Proposition \ref{transa}.
\end{lemma}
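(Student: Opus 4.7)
The plan is to argue via the explicit tree formula of Proposition \ref{expform}, exploiting the cyclicity of $\text{Tr}$ together with the graded commutativity of $\widetilde{A}$. First I would expand each commutator $\widehat{\gamma}(y) = \gamma y - (-1)^{|y|} y \gamma$ appearing in the edge decorations of a tree $\tau \in \Upsilon_n^{dec}$, rewriting each summand $m_\tau$ as a signed sum of compositions in which every occurrence of $\gamma$ has been promoted to an additional leaf. This expresses $m_n^B(x_1, \ldots, x_n)$ as a sum over rooted planar binary trees with $n + k$ leaves ($k \geq 0$) labeled in left-to-right order by some interleaving of the $x_1, \ldots, x_n$ (in the given order) with $k$ copies of $\gamma$, the only operations being $p$ at the root, $i$ at every leaf, $H$ on every internal edge, and $m_2^A$ at every internal vertex.

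Next, since the contraction $(p,i,H)$ is induced entrywise from one on $\widetilde{A}$, the maps $p$, $i$, and $H$ act as the identity on the matrix factor of $A = \widetilde{A} \otimes \textbf{M}_l(\textbf{k})$, and $m_2^A$ factors as the tensor product of the graded commutative product on $\widetilde{A}$ with matrix multiplication. Consequently, the matrix part of $m_\tau(x_1, \ldots, x_n)$ is simply the linear product, in the left-to-right order of leaves, of the matrix parts of the $x_i$'s and the inserted $\gamma$'s; the $\widetilde{A}$-part is a scalar in $\widetilde{B}$ computed by $p_{\widetilde{A}}$, $i_{\widetilde{A}}$, $H_{\widetilde{A}}$ and $m_2^{\widetilde{A}}$ along the shape of $\tau$. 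Applying $\text{Tr}$ makes the matrix word cyclically invariant, and the graded commutativity of $m_2^{\widetilde{A}}$ allows the scalar factors at each internal vertex to be rearranged freely. I would use these facts to define a $\Z/n$-action on pairs (expanded tree, cyclic permutation $\sigma$) by simultaneous cyclic re-rooting of the tree and cyclic shift of $\sigma$, under which each summand $\text{Tr}(m_\tau(x_{\sigma(1)}, \ldots, x_{\sigma(n)}))$ is preserved up to a sign depending only on the shape of $\tau$ and on $n$.

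The main obstacle will be the sign analysis: one must carefully track the signs $(-1)^{\epsilon(\tau)}$ from Proposition \ref{expform}, the Koszul signs generated by moving $\gamma$-leaves (of odd degree $1$) past the $x_i$'s during the cyclic re-rooting, and the signs coming from the commutator expansion of $\widehat{\gamma}$. Because the $x_i$ have even degree, no Koszul signs are produced by permuting them among themselves; only the $\gamma$-leaves and the tree combinatorics contribute. The goal is then to verify that, for odd $n > 1$, the residual signs sum to zero over each $\Z/n$-orbit, which is the parity condition that fails at $n=1$. The identical argument, applied via Remark \ref{formor} in which $H$ replaces $p$ at the root, yields the corresponding identity for the odd components $I_n$ of the morphism $I:B\to A^\gamma$.
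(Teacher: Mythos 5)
Your reduction to the tree expansion of Proposition \ref{expform} and your use of the entrywise nature of $p,i,H$ together with the commutativity of $\widetilde{A}$ are the right ingredients, but the cancellation mechanism you propose does not work. First, the claimed $\Z/n$-symmetry is unjustified and in general false: under cyclic re-rooting of a decorated tree the maps $p$, $i$ and $H$ do not trade places consistently, and in particular the homotopy $H$ ends up applied to \emph{different} subproducts of the inputs (e.g.\ for $n=3$, re-rooting relates $\mathrm{Tr}\,p\,m_2^A\bigl(H m_2^A(ix_1,ix_2),ix_3\bigr)$ to an expression in which $H$ acts on a product involving $x_3$), so an individual summand is not preserved even up to sign. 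Term-by-term cyclic invariance of a transferred structure is essentially the statement that the contraction is cyclic (self-adjoint $H$ with respect to a pairing), which is neither assumed here nor true for the Dupont contraction; the lemma is a much weaker statement about the \emph{sum} over trees and cyclic permutations. Second, even granting such an invariance up to sign, your cancellation scheme is arithmetically impossible: a $\Z/n$-orbit has odd cardinality (a divisor of the odd number $n$), and an odd number of signs $\pm 1$ cannot sum to zero, so ``residual signs sum to zero over each $\Z/n$-orbit'' can never hold for generic values. The expansion of $\widehat{\gamma}$ into extra $\gamma$-leaves (which, incidentally, do not carry the map $i$, since $\gamma\in A$) only adds sign bookkeeping without supplying a new cancellation.

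The actual argument is an involution, not a $\Z/n$-action. Every decorated tree computes $m_\tau = p\,(\widehat{\gamma}H)^{k_0} m_2^A(N_1,N_2)$ with $N_1$ on the first $k$ inputs and $N_2$ on the remaining $n-k$; pair $(\tau,\sigma)$ with the tree $\tau'$ obtained by swapping the two branches at this top binary vertex and with $\sigma$ composed with the cyclic shift by $k$. The sign $\epsilon(\tau)$ of Proposition \ref{expform} changes by $k(n-k-1)-(n-k)(k-1)=n-2k$, which is odd since $n$ is odd, so the two terms carry opposite signs; their traces coincide because $m_2^A$ is the commutative product on $\widetilde{A}$ tensored with matrix multiplication, $\mathrm{Tr}$ is cyclic, $p,H$ act entrywise, and the parity of the degrees of $N_1(x_{\sigma(1)},\dots,x_{\sigma(k)})$ and $N_2(\dots)$ (for even $x_i$ and odd $n$) makes the Koszul sign trivial. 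Since $k\neq n-k$ for odd $n$, there are no fixed pairs and everything cancels; the same pairing with $H$ at the root (Remark \ref{formor}) gives the statement for the odd components of $I$. You would need to replace your re-rooting symmetry by this top-vertex swap to obtain a correct proof.
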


\begin{proof}
By Proposition \ref{expform} it suffices to show that
\be{transum}
\sum_{\sigma - cyclic}\sum_{\; \tau \in \Upsilon _{n}^{dec}} (-1)^{\epsilon(\tau)}
\text{Tr}(m_{\tau}(x_{\sigma(1)},\ldots, x_{\sigma(n)}))
\ee
is identically 0.
Observe that for every $\tau$ we can write $m_{\tau}= p   m_{2}^{A} (N_{1},N_{2})$ for some maps
$N_{1}:B^{\otimes k}\rightarrow A$, $N_{2}:B^{\otimes n-k}\rightarrow A$ and some $k>0$. Using the definition of $\epsilon(\tau)$  one checks that the terms in
(\ref{transum}) corresponding to $p  m_{2}^{A} (N_{1},N_{2})$ and $p  m_{2}^{A} (N_{2},N_{1})$
appear with opposite signs. But it is easy to see that our parity assumptions imply
\begin{eqnarray}
&& \text{Tr}(m_{2}^{A} (N_{1}(x_{1},\ldots, x_{k}),N_{2}(x_{k+1},\ldots, x_{n})))
\nonumber \\ &&
= \ \text{Tr}(m_{2}^{A} (N_{2}(x_{k+1},\ldots, x_{n}),N_{1}(x_{1},\ldots, x_{k})))
\nonumber,
\end{eqnarray}
hence all terms in (\ref{transum}) cancel in pairs.
\end{proof}

\begin{lemma}\label{secondcw} Assume, in addition to the  hypotheses of Proposition \ref{transa}, that $A$ is commutative and extend the transferred curved $A_{\infty}$-structure on $B$ to $\textbf{M}_{l}(B)$. If we deform the latter structure by an element $\beta \in \textbf{M}_{l}(B_{1})$ to a a curved $A_{\infty}$-structure $\{m_{k}^{B,\beta}\}$ as in Proposition \ref{ainfdeform}, we have
\[\sum_{\sigma - \text{cyclic} }\text{Tr}(m_{n}^{B,\beta}(x_{\sigma(1)},\ldots, x_{\sigma(n)}))=0\]
for every odd $n>1$ and all $x_{1},\ldots,x_{n} \in \textbf{M}_{l}(B)$  of even degree.
\end{lemma}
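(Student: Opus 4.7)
My plan is to mimic the pairing argument used in the proof of Lemma \ref{cyclsym}, with the extra ingredient of first expanding the deformed structure via Proposition \ref{ainfdeform}. First, I would expand
$$m_n^{B,\beta}(x_1,\ldots,x_n) \;=\; \sum_{k=0}^{\infty}\sum_{\rho}\chi(\rho)\,m_{n+k}^{B}\bigl(y_{\rho(1)},\ldots,y_{\rho(n+k)}\bigr),$$
where $y_1=\cdots=y_k=\beta$, $y_{k+j}=x_j$, and $\rho$ ranges over the unshuffles appearing in Proposition \ref{ainfdeform}, and then substitute the sum-over-trees formula of Proposition \ref{expform} for each $m_{n+k}^B$. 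In this way the cyclic trace
$$\sum_{\sigma \text{ cyclic}} \text{Tr}\bigl(m_n^{B,\beta}(x_{\sigma(1)},\ldots,x_{\sigma(n)})\bigr)$$
becomes a quadruple sum over $\sigma$, $k$, $\rho$, and decorated trees $\tau \in \Upsilon_{n+k}^{dec}$; each summand is (up to sign) the trace of $m_\tau$ applied to a sequence of $n+k$ arguments of which $n$ are cyclically permuted $x$'s and $k$ are $\beta$'s placed in prescribed positions.

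For each decorated tree in this expansion I would then factor $m_\tau = p\, m_2^{A}(N_1,N_2)$ at the vertex closest to the root (exactly as in Lemma \ref{cyclsym}) and pair $\tau$ with its flipped companion $\tau^{\text{flip}}$ obtained by exchanging the subtrees $N_1$ and $N_2$. By graded commutativity of $A$ together with cyclicity of the matrix trace,
$$\text{Tr}\bigl(m_2^A(N_1(\ldots),N_2(\ldots))\bigr) \;=\; (-1)^{|N_1(\ldots)||N_2(\ldots)|}\,\text{Tr}\bigl(m_2^A(N_2(\ldots),N_1(\ldots))\bigr),$$
and I would verify that this Koszul sign, combined with the change in $\epsilon(\tau)$ under the flip and with the change in $\chi(\rho)$ needed to accommodate the new planar order of tails, forces each pair of contributions to appear with opposite signs.

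The main obstacle is the sign bookkeeping. In Lemma \ref{cyclsym} all inputs are of even degree and the Koszul factor $(-1)^{|N_1||N_2|}$ is automatically trivial; here the presence of the odd element $\beta$ among the tails makes this factor depend on how many $\beta$'s fall into each subtree, and one must match it carefully against the corresponding change in $\chi(\rho)$. The hypothesis that $n$ is odd and greater than $1$, together with the fact that the $x_i$'s are even, should be exactly what is needed to make the parities match across the flip so that the cancellation from Lemma \ref{cyclsym} persists. If the direct sign analysis proves too delicate, an alternative route is to use Proposition \ref{ainfmorphdeform} applied to the morphism $I\colon B \to A^{\gamma}$ of Proposition \ref{transa} to obtain a morphism $I^{\beta}\colon B^{\beta} \to A^{\gamma + \gamma'}$ with $\gamma' = \sum_k I_k(\beta^{\otimes k})$, and then transport the vanishing identity into a computation inside the honestly commutative algebra $A$, where graded commutativity and cyclicity of the matrix trace apply uniformly.
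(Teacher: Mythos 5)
Your proposal follows essentially the same route as the paper: the paper's own (very terse) proof likewise disposes of the $k=0$ term of (\ref{deforma}) by extending Lemma \ref{cyclsym} to $\textbf{M}_{l}(B)$ and then asserts that ``a computation similar to the proof of Lemma \ref{cyclsym}'' kills the $k>0$ sums, which is exactly the tree-flip pairing with $\beta$-insertions you describe. The only advice is to keep the Koszul signs coming from evaluating the graded sub-operations $N_{1},N_{2}$ (odd when they contain an odd number of $H$'s) on arguments containing the odd element $\beta$ --- these, together with the change in $\chi$ and $\epsilon(\tau)$, are precisely what make the flipped pairs cancel --- and to drop the speculative detour through Proposition \ref{ainfmorphdeform}, which is not needed and does not obviously transport the trace identity.
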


\begin{proof}
Lemma \ref{cyclsym} easily implies  that the curved $A_{\infty}$-structure on  $\textbf{M}_{l}(B)$ satisfies (\ref{cycsym}). A computation similar to the proof of Lemma \ref{cyclsym} shows that the trace vanishes on the sums appearing in (\ref{deforma}) for every $k>0$, which proves the claim.
\end{proof}

\begin{example}\label{transaex} Let $X$ be a finite polyhedron. Given a  vector bundle $E$ on $X$, we can find a piece-wise smooth idempotent $P$ representing the $K$-theory class of $E$, as already noted in Sect. 2. Thus, we obtain a curved dg structure on the complex $\Omega^{\bullet}(X,\textbf{M}_{l}(\textbf{k}))$ of piece-wise smooth forms on $X$ as in Example \ref{ex3section1}. As in Sect. 2, we equip $\Omega^{\bullet}(X,\textbf{M}_{l}(\textbf{k}))$ with an $L_{2}$-norm and use the inclusion $W$ to transfer this norm to $C^{\bullet}(X,\textbf{M}_{l}(\textbf{k}))$. Assume that we have fixed fine enough subdivision $\widetilde{X}$ of $X$  so that the condition $\|\widehat{\gamma}\| \, \|H\|<1$ (where $H$ is the contraction from Theorem \ref{contraction} and $\widehat{\gamma}$ denotes commutator with $\gamma=
PdP$) is satisfied locally, i.e. on each simplex. Then  we can transfer the curved dg structure on $\Omega^{\bullet}(\widetilde{X},\textbf{M}_{l}(\textbf{k}))$ obtaining a curved $A_{\infty}$-structure on $C^{\bullet}(\widetilde{X},\textbf{M}_{l}(\textbf{k}))$ as in Proposition \ref{transa} (all series are locally convergent
as in Proposition \ref{exadef}).
\end{example}

We remark that the analogue of Proposition \ref{transa} for curved Lie dg structures holds. Indeed, in the absence of curvature, this is the main result of \cite{H2} (see also \cite{Be} for an alternative approach). The generalization to the curved case is done is as in the proof of Proposition \ref{transa}. As an application, one can transfer the curved dg Lie algebra structure associated to a principal bundle $P$ from Example \ref{cdglex} to a curved $L_{\infty}$-structure on the complex
of the $\mathfrak{g}$-valued cochains of a fine enough triangulation of the total space of $P$.

\medskip

We conclude this section by summarizing our results obtained so far as follows. Given a curved dg algebra (or curved Lie dg algebra) $A$  that is obtained as a deformation of a (Lie) dg algebra via a degree 1 element $\gamma$ and a special contraction from $A$ to a complex $B$, there are two procedures to construct a curved $A_{\infty}$(respectively $L_{\infty}$)-structure on $B$: One can first transfer the (Lie) dg algebra structure of $A$ to $B$ and then deform this transferred structure via the transferred element $\gamma '$ or one can directly transfer the curved (Lie) dg structure of $A$ to $B$. We shall see in the next section that one can obtain non-trivial characteristic classes from a curved $A_{\infty}$-structure obtained by the second procedure, while for curved $A_{\infty}$-structures obtained by the first procedure that appears to be impossible.

\section{A generalized Chern-Weil theory}

\subsection{Chern-Weil triples}

In this section, we develop an abstract algebraic version of the classical Chern-Weil algorithm
for producing characteristic classes of vector bundles.

\begin{definition}\label{cwtriple}
Let $(A, \{m_{k}\}_{k=0}^{\infty})$ be a curved $A_{\infty}$-algebra, $(C,d)$ a chain complex,
and $\Phi: A \rightarrow C$ a linear map of degree zero.
We call $(A,\Phi,C)$ a {\bf Chern-Weil triple} if for for every $x$ of even degree in $A$,
every odd $n>1$ and all even $x_{1},\ldots,x_{n}$ in $A$ the following  conditions hold.
\begin{eqnarray*}
\text{(i)} & \Phi (m_{1}(x)) \, = \, d(\Phi(x)) \\
\text{(ii)} & \sum_{\sigma \in \textbf{S}_{n}}\Phi(m_{n}(x_{\sigma (1)},\ldots, x_{\sigma(n)})) \, = \, 0 \,.
\end{eqnarray*}
\end{definition}

Given a Chern-Weil triple $(A,\Phi,C)$,
it is easy to check that $\Phi(m_{2}(m_{0},m_{0}))$ is a cocycle in $C$.
We shall now construct higher cocycles in $C$ by introducing higher powers of the curvature $m_{0}$.


In order to formulate our next result, we
recall that there is a one-to-one correspondence between rooted planar trees and bracket expressions,
for example:
\[
\xygraph{
!{<0cm,0cm>;<1cm,0cm>:<0cm,0.8cm>::}
!{(0,1)}*{\Blt}="1"
!{(-0.8,0)}*{\Blt}="2"
!{(0.8,0)}*{\Blt}="3"
!{(-1,-1)}*{\Blt}="4"
!{(-0.6,-1)}*{\Blt}="5"
!{(0,-1)}*{\Blt}="6"
!{(0.4,-1)}*{\Blt}="7"
!{(0.8,-1)}*{\Blt}="8"
!{(1.2,-1)}*{\Blt}="9"
"1"-"2" "1"-"3" "1"-"6" "2"-"4" "2"-"5" "3"-"7" "3"-"8" "3"-"9"
}
\quad \Longleftrightarrow \quad ((\blt \blt) \blt(\blt \blt \blt))
\]
Using this correspondence, given a curved $A_{\infty}$-algebra $A$, we define an element $\mathcal{M}_{\TRE}$ in $A$ for every rooted planar tree $\TRE$ by assigning to each vertex of valency $k$ the operation $m_{k}$ in $A$.
Thus in the example above we have:
\[
\TRE \, = \hspace{1pt}
\xygraph{
!{<0cm,0cm>;<1cm,0cm>:<0cm,0.8cm>::}
!{(0,1)}*{\Blt}="1"
!{(-0.8,0)}*{\Blt}="2"
!{(0.8,0)}*{\Blt}="3"
!{(-1,-1)}*{\Blt}="4"
!{(-0.6,-1)}*{\Blt}="5"
!{(0,-1)}*{\Blt}="6"
!{(0.4,-1)}*{\Blt}="7"
!{(0.8,-1)}*{\Blt}="8"
!{(1.2,-1)}*{\Blt}="9"
"1"-"2" "1"-"3" "1"-"6" "2"-"4" "2"-"5" "3"-"7" "3"-"8" "3"-"9"
}
\quad \Longrightarrow \quad
\mathcal{M}_{\TRE}=m_{3}\left(m_{2}(m_{0},m_{0}),m_{0},m_{3}(m_{0},m_{0},m_{0})\right).
\]
We call the degree of the element $\mathcal{M}_{\TRE}$ in $A$ (which is independent of $A$)
the {\bf degree} of the tree $\TRE$.
We say that a rooted planar tree is {\bf completely even}
if all of its vertices have even valency and denote the set of all completely even trees of degree $n$
by
$
\CET(n) \,
$.
Given a rooted tree, consider for each tail the number of edges in the shortest path joining the tail and the root. We call the largest among these numbers the {\bf depth} of the rooted tree.

\begin{remark}\label{newrem1}
It is instructive
to note that every completely even tree  can be  presented~as
\[
\TRE \, = \hspace{1pt}
\xygraph{
!{<0cm,0cm>;<1.5cm,0cm>:<0cm,1cm>::}
!{(0,0.6)}*{\Blt}="1"
!{(-0.5,-0.1)}*{\Blt}="2"
!{(0,-0.1)}*{\ldots}="3"
!{(0.5,-0.1)}*{\Blt}="4"
!{(-0.7,-0.6)}*{.}="5"
!{(-0.3,-0.6)}*{.}="6"
!{(0.3,-0.6)}*{.}="7"
!{(0.7,-0.6)}*{.}="8"
!{(-0.48,-0.48)}*{\TRE_1}="9"
!{(0.53,-0.48)}*{\TRE_n}="10"
"1"-"2" "1"-"4" "2"-"5" "2"-"6" "4"-"7" "4"-"8"
}
\quad \Longrightarrow \quad
\mathcal{M}_{\TRE} \, = \, m_n \bigl(\mathcal{M}_{\TRE_1},\dots,\mathcal{M}_{\TRE_n}\bigr) \,.
\]
Hence, degree of $\mathcal{M}_{\TRE}$ $=$ $2$ $-$ $n$ $+$ degree of $\mathcal{M}_{\TRE_1}$ $+$ $\dots$ $+$
degree of $\mathcal{M}_{\TRE_n}$, in other words,
degree of $\mathcal{M}_{\TRE}$ $-$ $1$ $=$ $1$ $+$ (degree of $\mathcal{M}_{\TRE_1}-1$) $+$ $\cdots$ $+$
(degree of $\mathcal{M}_{\TRE_n}-1$).
It follows then by induction on the number of  vertices of $\TRE$ that
\[
\text{degree of } \TRE \, = \, 1 + \text{number of vertices of } \TRE \,.
\]
\end{remark}

\begin{theorem}\label{defchar}
Let $(A,\Phi,C)$ be a Chern-Weil triple. Then
\be{cocycles}
\mathcal{C}_{n} \, = \, \sum_{\TRE \, \in \, \CET(2n)}\Phi\left({\mathcal{M}_{\TRE}}\right)
\ee
is a cocycle in $C^{2n}$ for every $n>0$.
\end{theorem}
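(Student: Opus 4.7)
The plan is to apply condition (i) of Definition \ref{cwtriple} and then use the curved $A_\infty$ identity \eqref{Ainfide} at the root of each tree in $\CET(2n)$, reducing the resulting sums to symmetric sums that vanish by condition (ii).

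First, by Remark \ref{newrem1} every $\mathcal{M}_\TRE$ with $\TRE \in \CET(2n)$ has even degree $2n$, so condition (i) gives $d\Phi(\mathcal{M}_\TRE) = \Phi(m_1 \mathcal{M}_\TRE)$; hence $d\mathcal{C}_n = \sum_{\TRE \in \CET(2n)} \Phi(m_1 \mathcal{M}_\TRE)$, and it suffices to show that this sum vanishes.

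Next, for each such $\TRE$ with root of valency $k$ and subtrees $\TRE_1, \ldots, \TRE_k$, I apply \eqref{Ainfide} with $n = k$ to the tuple $(\mathcal{M}_{\TRE_1}, \ldots, \mathcal{M}_{\TRE_k})$ of even-degree elements, so that all Koszul signs are trivial, and isolate the $(r,s,t) = (0,k,0)$ summand to write
\begin{equation*}
m_1 \mathcal{M}_\TRE \;=\; -\sum_{\substack{r+s+t=k \\ (r,s,t)\neq(0,k,0)}} (-1)^{r+st}\, m_{r+t+1}\bigl(\mathcal{M}_{\TRE_1},\ldots, m_s(\mathcal{M}_{\TRE_{r+1}},\ldots), \ldots, \mathcal{M}_{\TRE_k}\bigr).
\end{equation*}
Each summand equals $\mathcal{M}_{\TRE^\dagger}$ for a unique tree $\TRE^\dagger$ of degree $2n+1$ obtained from $\TRE$ by splitting the root into a new root of valency $a := r+t+1$ and an inserted sub-root of valency $s$ at position $r+1$; since $a+s = k+1$ is odd, exactly one of $a, s$ is odd, so $\TRE^\dagger$ has a unique internal odd-valency vertex located either at the root (Case A: $a$ odd) or at depth~$1$ (Case B: $s$ odd).

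Summing over $\TRE \in \CET(2n)$ and reindexing by $\TRE^\dagger$ with a careful count of signed pre-images---every child is a valid pre-image in Case A, giving signs $(-1)^{i-1}$ that sum to $1$; the odd depth-$1$ vertex is the unique pre-image in Case B, with sign $-1$---the sum becomes $d\mathcal{C}_n = -\sum_{\TRE^\dagger\in\text{A}}\Phi(\mathcal{M}_{\TRE^\dagger}) + \sum_{\TRE^\dagger\in\text{B}}\Phi(\mathcal{M}_{\TRE^\dagger})$. Grouping Case A trees by the planar-unordered multiset of their root's subtrees realises the first sum, within each multiset-class, as a multiple of $\sum_{\sigma \in \textbf{S}_a}\Phi(m_a(\mathcal{M}_{c_{\sigma(1)}},\ldots,\mathcal{M}_{c_{\sigma(a)}}))$, which vanishes by condition (ii) since $a>1$ is odd and the $\mathcal{M}_{c_i}$ are all of even degree.

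The main obstacle is the Case B sum. It is handled by applying \eqref{Ainfide} a second time at the root of each Case B tree $\TRE^\dagger$, but now to the \emph{flattened} tuple obtained by ungrouping the children of the odd depth-$1$ vertex: $\mathcal{M}_{\TRE^\dagger}$ appears as one distinguished summand, so the remaining summands give a re-expression of $\mathcal{M}_{\TRE^\dagger}$ as a signed sum of regroupings. Carefully tracking signs and summing over all Case B configurations, the regroupings fall into two classes: those that reduce to Case A at the new root (vanishing by condition (ii) as above), and those in which the odd vertex is moved one step deeper. An induction on the number of vertices of $\TRE^\dagger$ (or equivalently on the depth of the odd vertex) then yields $d\mathcal{C}_n = 0$. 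The delicate point throughout is the sign bookkeeping, which cooperates thanks to the parity constraints intrinsic to $\CET$-trees and the alternation $(-1)^{r+st}$ in \eqref{Ainfide}.
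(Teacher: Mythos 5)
Your opening moves are sound and in fact reproduce part of the paper's own argument: the reduction via condition (i), the expansion of $m_1\mathcal{M}_{\TRE}$ by applying (\ref{Ainfide}) at the root to the (even-degree) tuple $(\mathcal{M}_{\TRE_1},\ldots,\mathcal{M}_{\TRE_k})$, the observation that each resulting tree has a unique odd-valency vertex at the root (your Case A) or at depth one (your Case B), the signed pre-image count giving coefficients $-1$ and $+1$ respectively, and the disposal of Case A by symmetrising over orderings of the root subtrees and invoking condition (ii) --- this is exactly the paper's Eq.\ (\ref{REC1}) together with its final appeal to (ii). The gap is in Case B, which is the crux, and your proposed fix does not work. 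The ``flattened tuple obtained by ungrouping the children of the odd depth-one vertex'' of a Case B tree $\TRE^{\dagger}$ is precisely the tuple of root subtrees of the contracted completely even tree $\widetilde{\TRE}\in\CET(2n)$ from which $\TRE^{\dagger}$ arose; so your ``second application of (\ref{Ainfide}) at the root'' is literally the same root identity you already used in step one (it contains $\mathcal{M}_{\TRE^{\dagger}}$ as one summand and $m_1(\mathcal{M}_{\widetilde{\TRE}})$ as another), and summing it over Case B trees only recombines relations already at hand. Moreover, regrouping the flattened tuple at the root produces trees whose unique odd vertex sits either at the root or again at depth one --- never ``one step deeper'' --- and all these trees have the same number of vertices, so the proposed induction has no decreasing quantity and no base case.

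That $\sum_{\TRE^{\dagger}\in B}\mathcal{M}_{\TRE^{\dagger}}=0$ is indeed true, but it cannot follow from root-level identities alone. Already for $n=2$ the Case B terms are $m_{2}(m_{1}(m_{0}),m_{0})+m_{2}(m_{0},m_{1}(m_{0}))$, which vanish only because $m_{1}(m_{0})=0$, an $A_{\infty}$ relation applied at a deeper vertex. In general the cancellation pairs, for each tree with a non-root odd vertex $v$, the single operation performed at the parent of $v$ (your Case B term, sign $+1$) against the $\operatorname{val}(v)$ operations performed at $v$ itself, whose signs alternate and sum to $-1$; the latter arise from expanding $m_1\mathcal{M}_{\TRE}$ at non-root vertices, which a root-only expansion never produces. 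This is exactly what the paper supplies through formula (\ref{formulam1}) (proved by induction on depth, applying (\ref{Ainfide}) at every vertex) and its ``above/below'' cancellation of all terms whose odd vertex is not the root; alternatively, the generating-function argument of Section \ref{PreCh}, Eqs.\ (\ref{tmpeq-x2})--(\ref{tmpeq-x3}), proves the needed identity (\ref{EQ1-1}) by induction on $n$ with base $m_{1}(m_{0})=0$. You need one of these ingredients (or an equivalent use of the $A_{\infty}$ identities at non-root vertices) to close Case B; as written, your argument establishes only the half of the proof corresponding to the odd-root terms.
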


\begin{proof}
According to property (i) in Definition \ref{cwtriple}, it suffices to show that
\[\sum_{\TRE \, \in \, \CET(2n)}m_{1}({\mathcal{M}_{\TRE}})
\]
is in the kernel of $\Phi$. This will be done using the following formula
valid for every completely even tree $\TRE$
(in fact, for every tree with no vertex of valency $1$):
\be{formulam1}
m_{1}(\mathcal{M}_{\TRE}) \, = \,
\sum_{\opr \, \in \, B(\TRE)}(-1)^{\varepsilon(\TRE,\OPR{\TRE})} \, \mathcal{M}_{\OPR{\TRE}}\,,
\ee
where $B(\TRE)$ is a set consisting of two types of operations $\TRE \, \mathop{\longmapsto}\limits^{\opr} \, \OPR{\TRE}$
 defined as follows:

(a) {\it Attaching} an edge to a vertex of $\TRE$ which is not a
tail.
The new vertex of the added edge then becomes a new
tail
for the resulting tree.
This operation can  schematically be represented as follows:
\be{op1}
\xygraph{
!{<0cm,0cm>;<0.8cm,0cm>:<0cm,2.4cm>::}
!{(0,0.5)}*{\Blt}="1"
!{(-0.45,0.35)}*{.}="2"
!{(0,0.05)}*{\Blt}="3"
!{(0.4,0.09)}*{m_k}
!{(0.45,0.35)}*{.}="4"
!{(-1.5,-0.25)}*{\Blt}="5"
!{(0.13,-0.08)}*{\text{\scriptsize $j$}}
!{(-0.63,-0.23)}*{.\,.\,.}
!{(0.63,-0.23)}*{.\,.\,.}
!{(0,-0.25)}*{\Blt}="6"
!{(1.5,-0.25)}*{\Blt}="7"
!{(-3,-0.5)}*{.}="8"
!{(3,-0.5)}*{.}="9"
!{(-1.7,-0.5)}*{.}="10"
!{(-1.3,-0.5)}*{.}="11"
!{(-0.2,-0.5)}*{.}="13"
!{(0.2,-0.5)}*{.}="14"
!{(1.3,-0.5)}*{.}="15"
!{(1.7,-0.5)}*{.}="16"
!{(0,0.35)}*{.}="3a"
!{(0,0.2)}*{.}="3b"
!{(0,0.23)}*{.}!{(0,0.26)}*{.}!{(0,0.29)}*{.}!{(0,0.32)}*{.}
!{(-0.9,0.2)}*{.}!{(-0.81,0.23)}*{.}!{(-0.72,0.26)}*{.}!{(-0.63,0.29)}*{.}!{(-0.54,0.32)}*{.}
!{(-0.99,0.17)}*{.}!{(-1.08,0.14)}*{.}!{(-1.17,0.11)}*{.}!{(-1.26,0.08)}*{.}!{(-1.35,0.05)}*{.}!{(-1.44,0.02)}*{.}!{(-1.53,-0.01)}*{.}!{(-1.62,-0.04)}*{.}!{(-1.71,-0.07)}*{.}!{(-1.8,-0.1)}*{.}!{(-1.89,-0.13)}*{.}!{(-1.98,-0.16)}*{.}!{(-2.07,-0.19)}*{.}!{(-2.16,-0.22)}*{.}!{(-2.25,-0.25)}*{.}!{(-2.34,-0.28)}*{.}!{(-2.43,-0.31)}*{.}!{(-2.52,-0.34)}*{.}!{(-2.61,-0.37)}*{.}!{(-2.7,-0.4)}*{.}!{(-2.79,-0.43)}*{.}!{(-2.895,-0.465)}*{.}
!{(0.9,0.2)}*{.}!{(0.81,0.23)}*{.}!{(0.72,0.26)}*{.}!{(0.63,0.29)}*{.}!{(0.54,0.32)}*{.}
!{(0.99,0.17)}*{.}!{(1.08,0.14)}*{.}!{(1.17,0.11)}*{.}!{(1.26,0.08)}*{.}!{(1.35,0.05)}*{.}!{(1.44,0.02)}*{.}!{(1.53,-0.01)}*{.}!{(1.62,-0.04)}*{.}!{(1.71,-0.07)}*{.}!{(1.8,-0.1)}*{.}!{(1.89,-0.13)}*{.}!{(1.98,-0.16)}*{.}!{(2.07,-0.19)}*{.}!{(2.16,-0.22)}*{.}!{(2.25,-0.25)}*{.}!{(2.34,-0.28)}*{.}!{(2.43,-0.31)}*{.}!{(2.52,-0.34)}*{.}!{(2.61,-0.37)}*{.}!{(2.7,-0.4)}*{.}!{(2.79,-0.43)}*{.}!{(2.895,-0.465)}*{.}
!{(-0.75,-0.5)}*{.\,.\,.}
!{(0.75,-0.5)}*{.\,.\,.}
!{(-1.5,-0.4)}*{.}!{(-1.5,-0.44)}*{.}!{(-1.5,-0.48)}*{.}
!{(1.5,-0.4)}*{.}!{(1.5,-0.44)}*{.}!{(1.5,-0.48)}*{.}
!{(0,-0.4)}*{.}!{(0,-0.44)}*{.}!{(0,-0.48)}*{.}
"1"-"2" "1"-"3a" "1"-"4"
"3"-"5" "3b"-"3" "3"-"6" "3"-"7"
"5"-"10" "5"-"11" "7"-"15" "7"-"16" "6"-"13" "6"-"14"
}
\hspace{10pt}
\mathop{\longmapsto}\limits^{\opr}
\hspace{10pt}
\xygraph{
!{<0cm,0cm>;<0.8cm,0cm>:<0cm,2.4cm>::}
!{(0,0.5)}*{\Blt}="1"
!{(-0.45,0.35)}*{.}="2"
!{(0,0.05)}*{\Blt}="3"
!{(0.4,0.09)}*{\hspace{8.5pt} m_{k+1}}
!{(0.45,0.35)}*{.}="4"
!{(-1.5,-0.25)}*{\Blt}="5"
!{(-0.3,-0.13)}*{\text{\scriptsize $j$}}
!{(0.35,-0.13)}*{\text{\scriptsize $j$+1}}
!{(-0.8,-0.23)}*{.\,.\,.}
!{(0.63,-0.23)}*{.\,.\,.}
!{(0,-0.25)}*{\Blt}="6"
!{(1.5,-0.25)}*{\Blt}="7"
!{(-3,-0.5)}*{.}="8"
!{(3,-0.5)}*{.}="9"
!{(-1.7,-0.5)}*{.}="10"
!{(-1.3,-0.5)}*{.}="11"
!{(-0.5,-0.5)}*{\Blt}="12"
!{(-0.2,-0.5)}*{.}="13"
!{(0.2,-0.5)}*{.}="14"
!{(1.3,-0.5)}*{.}="15"
!{(1.7,-0.5)}*{.}="16"
!{(0,0.35)}*{.}="3a"
!{(0,0.2)}*{.}="3b"
!{(0,0.23)}*{.}!{(0,0.26)}*{.}!{(0,0.29)}*{.}!{(0,0.32)}*{.}
!{(-0.9,0.2)}*{.}!{(-0.81,0.23)}*{.}!{(-0.72,0.26)}*{.}!{(-0.63,0.29)}*{.}!{(-0.54,0.32)}*{.}
!{(-0.99,0.17)}*{.}!{(-1.08,0.14)}*{.}!{(-1.17,0.11)}*{.}!{(-1.26,0.08)}*{.}!{(-1.35,0.05)}*{.}!{(-1.44,0.02)}*{.}!{(-1.53,-0.01)}*{.}!{(-1.62,-0.04)}*{.}!{(-1.71,-0.07)}*{.}!{(-1.8,-0.1)}*{.}!{(-1.89,-0.13)}*{.}!{(-1.98,-0.16)}*{.}!{(-2.07,-0.19)}*{.}!{(-2.16,-0.22)}*{.}!{(-2.25,-0.25)}*{.}!{(-2.34,-0.28)}*{.}!{(-2.43,-0.31)}*{.}!{(-2.52,-0.34)}*{.}!{(-2.61,-0.37)}*{.}!{(-2.7,-0.4)}*{.}!{(-2.79,-0.43)}*{.}!{(-2.895,-0.465)}*{.}
!{(0.9,0.2)}*{.}!{(0.81,0.23)}*{.}!{(0.72,0.26)}*{.}!{(0.63,0.29)}*{.}!{(0.54,0.32)}*{.}
!{(0.99,0.17)}*{.}!{(1.08,0.14)}*{.}!{(1.17,0.11)}*{.}!{(1.26,0.08)}*{.}!{(1.35,0.05)}*{.}!{(1.44,0.02)}*{.}!{(1.53,-0.01)}*{.}!{(1.62,-0.04)}*{.}!{(1.71,-0.07)}*{.}!{(1.8,-0.1)}*{.}!{(1.89,-0.13)}*{.}!{(1.98,-0.16)}*{.}!{(2.07,-0.19)}*{.}!{(2.16,-0.22)}*{.}!{(2.25,-0.25)}*{.}!{(2.34,-0.28)}*{.}!{(2.43,-0.31)}*{.}!{(2.52,-0.34)}*{.}!{(2.61,-0.37)}*{.}!{(2.7,-0.4)}*{.}!{(2.79,-0.43)}*{.}!{(2.895,-0.465)}*{.}
!{(-0.9,-0.5)}*{.\hspace{1pt}.\hspace{1pt}.}
!{(0.75,-0.5)}*{.\,.\,.}
!{(-1.5,-0.4)}*{.}!{(-1.5,-0.44)}*{.}!{(-1.5,-0.48)}*{.}
!{(1.5,-0.4)}*{.}!{(1.5,-0.44)}*{.}!{(1.5,-0.48)}*{.}
!{(0,-0.4)}*{.}!{(0,-0.44)}*{.}!{(0,-0.48)}*{.}
"1"-"2" "1"-"3a" "1"-"4"
"3"-"5" "3b"-"3" "3"-"6" "3"-"7"
"5"-"10" "5"-"11" "7"-"15" "7"-"16" "3"-"12" "6"-"13" "6"-"14"
}
\ee
Note that if this operation is performed on a vertex of valency $k$ there are
$k+1$ possibilities for attachment. We assign to the tree $\OPR{\TRE}$
so obtained the sign $(-1)^{\varepsilon(\TRE,\OPR{\TRE} )}=(-1)^{j+1}$,
where $j=0,\ldots,k$ represents the position of the attached edge moving from left to right.

(b) {\it Grafting} an edge at a vertex which is not a
tail.
We can represent this operation schematically as follows:
\be{op2}
\xygraph{
!{<0cm,0cm>;<0.8cm,0cm>:<0cm,3cm>::}
!{(0,0.5)}*{\Blt}="1"
!{(-0.45,0.35)}*{.}="2"
!{(0,0.1)}*{\Blt}="3"
!{(0.4,0.14)}*{m_k}
!{(0.45,0.35)}*{.}="4"
!{(-1.5,-0.14)}*{\Blt}="5"
!{(-0.5,-0.14)}*{\Blt}="6"
!{(0.5,-0.14)}*{\Blt}="6a"
!{(1.5,-0.14)}*{\Blt}="7"
!{(-1,-0.14)}*{.\,.\,.}
!{(1,-0.14)}*{.\,.\,.}
!{(0,-0.14)}*{.\,.\,.}
!{(0,-0.18)}*{\text{\tiny $\underbrace{\hspace{19pt}}$}}
!{(0,-0.23)}*{\text{\scriptsize $s$}}
!{(-3,-0.5)}*{.}="8"
!{(3,-0.5)}*{.}="9"
!{(-1.7,-0.5)}*{.}="10"
!{(-1.3,-0.5)}*{.}="11"
!{(-0.7,-0.5)}*{.}="13"
!{(-0.3,-0.5)}*{.}="14"
!{(0.7,-0.5)}*{.}="13a"
!{(0.3,-0.5)}*{.}="14a"
!{(1.3,-0.5)}*{.}="15"
!{(1.7,-0.5)}*{.}="16"
!{(0,0.35)}*{.}="3a"
!{(0,0.2)}*{.}="3b"
!{(0,0.23)}*{.}!{(0,0.26)}*{.}!{(0,0.29)}*{.}!{(0,0.32)}*{.}
!{(-0.9,0.2)}*{.}!{(-0.81,0.23)}*{.}!{(-0.72,0.26)}*{.}!{(-0.63,0.29)}*{.}!{(-0.54,0.32)}*{.}
!{(-0.99,0.17)}*{.}!{(-1.08,0.14)}*{.}!{(-1.17,0.11)}*{.}!{(-1.26,0.08)}*{.}!{(-1.35,0.05)}*{.}!{(-1.44,0.02)}*{.}!{(-1.53,-0.01)}*{.}!{(-1.62,-0.04)}*{.}!{(-1.71,-0.07)}*{.}!{(-1.8,-0.1)}*{.}!{(-1.89,-0.13)}*{.}!{(-1.98,-0.16)}*{.}!{(-2.07,-0.19)}*{.}!{(-2.16,-0.22)}*{.}!{(-2.25,-0.25)}*{.}!{(-2.34,-0.28)}*{.}!{(-2.43,-0.31)}*{.}!{(-2.52,-0.34)}*{.}!{(-2.61,-0.37)}*{.}!{(-2.7,-0.4)}*{.}!{(-2.79,-0.43)}*{.}!{(-2.895,-0.465)}*{.}
!{(0.9,0.2)}*{.}!{(0.81,0.23)}*{.}!{(0.72,0.26)}*{.}!{(0.63,0.29)}*{.}!{(0.54,0.32)}*{.}
!{(0.99,0.17)}*{.}!{(1.08,0.14)}*{.}!{(1.17,0.11)}*{.}!{(1.26,0.08)}*{.}!{(1.35,0.05)}*{.}!{(1.44,0.02)}*{.}!{(1.53,-0.01)}*{.}!{(1.62,-0.04)}*{.}!{(1.71,-0.07)}*{.}!{(1.8,-0.1)}*{.}!{(1.89,-0.13)}*{.}!{(1.98,-0.16)}*{.}!{(2.07,-0.19)}*{.}!{(2.16,-0.22)}*{.}!{(2.25,-0.25)}*{.}!{(2.34,-0.28)}*{.}!{(2.43,-0.31)}*{.}!{(2.52,-0.34)}*{.}!{(2.61,-0.37)}*{.}!{(2.7,-0.4)}*{.}!{(2.79,-0.43)}*{.}!{(2.895,-0.465)}*{.}
!{(-1.5,-0.4)}*{.}!{(-1.5,-0.44)}*{.}!{(-1.5,-0.48)}*{.}
!{(1.5,-0.4)}*{.}!{(1.5,-0.44)}*{.}!{(1.5,-0.48)}*{.}
!{(-0.5,-0.4)}*{.}!{(-0.5,-0.44)}*{.}!{(-0.5,-0.48)}*{.}
!{(0.5,-0.4)}*{.}!{(0.5,-0.44)}*{.}!{(0.5,-0.48)}*{.}
"1"-"2" "1"-"3a" "1"-"4"
"3"-"5" "3b"-"3" "3"-"6" "3"-"6a" "3"-"7"
"5"-"10" "5"-"11" "7"-"15" "7"-"16" "6"-"13" "6"-"14" "6a"-"13a" "6a"-"14a"
}
\hspace{10pt}
\mathop{\longmapsto}\limits^{\opr}
\hspace{10pt}
\xygraph{
!{<0cm,0cm>;<0.8cm,0cm>:<0cm,3cm>::}
!{(0,0.5)}*{\Blt}="1"
!{(-0.45,0.35)}*{.}="2"
!{(0,0.1)}*{\Blt}="3"
!{(0.4,0.14)}*{m_r}
!{(0.45,0.35)}*{.}="4"
!{(-1.5,-0.14)}*{\Blt}="5"
!{(0,-0.14)}*{\Blt}="6z"
!{(0.4,-0.075)}*{m_s}
!{(-0.16,0)}*{\text{\scriptsize $j$}}
!{(-0.5,-0.3)}*{\Blt}="6"
!{(0.5,-0.3)}*{\Blt}="6a"
!{(1.5,-0.14)}*{\Blt}="7"
!{(-0.75,-0.14)}*{.\,.\,.}
!{(0.75,-0.14)}*{.\,.\,.}
!{(0,-0.3)}*{.\,.\,.}
!{(-3,-0.5)}*{.}="8"
!{(3,-0.5)}*{.}="9"
!{(-1.7,-0.5)}*{.}="10"
!{(-1.3,-0.5)}*{.}="11"
!{(-0.7,-0.5)}*{.}="13"
!{(-0.3,-0.5)}*{.}="14"
!{(0.7,-0.5)}*{.}="13a"
!{(0.3,-0.5)}*{.}="14a"
!{(1.3,-0.5)}*{.}="15"
!{(1.7,-0.5)}*{.}="16"
!{(0,0.35)}*{.}="3a"
!{(0,0.2)}*{.}="3b"
!{(0,0.23)}*{.}!{(0,0.26)}*{.}!{(0,0.29)}*{.}!{(0,0.32)}*{.}
!{(-0.9,0.2)}*{.}!{(-0.81,0.23)}*{.}!{(-0.72,0.26)}*{.}!{(-0.63,0.29)}*{.}!{(-0.54,0.32)}*{.}
!{(-0.99,0.17)}*{.}!{(-1.08,0.14)}*{.}!{(-1.17,0.11)}*{.}!{(-1.26,0.08)}*{.}!{(-1.35,0.05)}*{.}!{(-1.44,0.02)}*{.}!{(-1.53,-0.01)}*{.}!{(-1.62,-0.04)}*{.}!{(-1.71,-0.07)}*{.}!{(-1.8,-0.1)}*{.}!{(-1.89,-0.13)}*{.}!{(-1.98,-0.16)}*{.}!{(-2.07,-0.19)}*{.}!{(-2.16,-0.22)}*{.}!{(-2.25,-0.25)}*{.}!{(-2.34,-0.28)}*{.}!{(-2.43,-0.31)}*{.}!{(-2.52,-0.34)}*{.}!{(-2.61,-0.37)}*{.}!{(-2.7,-0.4)}*{.}!{(-2.79,-0.43)}*{.}!{(-2.895,-0.465)}*{.}
!{(0.9,0.2)}*{.}!{(0.81,0.23)}*{.}!{(0.72,0.26)}*{.}!{(0.63,0.29)}*{.}!{(0.54,0.32)}*{.}
!{(0.99,0.17)}*{.}!{(1.08,0.14)}*{.}!{(1.17,0.11)}*{.}!{(1.26,0.08)}*{.}!{(1.35,0.05)}*{.}!{(1.44,0.02)}*{.}!{(1.53,-0.01)}*{.}!{(1.62,-0.04)}*{.}!{(1.71,-0.07)}*{.}!{(1.8,-0.1)}*{.}!{(1.89,-0.13)}*{.}!{(1.98,-0.16)}*{.}!{(2.07,-0.19)}*{.}!{(2.16,-0.22)}*{.}!{(2.25,-0.25)}*{.}!{(2.34,-0.28)}*{.}!{(2.43,-0.31)}*{.}!{(2.52,-0.34)}*{.}!{(2.61,-0.37)}*{.}!{(2.7,-0.4)}*{.}!{(2.79,-0.43)}*{.}!{(2.895,-0.465)}*{.}
!{(-1.5,-0.4)}*{.}!{(-1.5,-0.44)}*{.}!{(-1.5,-0.48)}*{.}
!{(1.5,-0.4)}*{.}!{(1.5,-0.44)}*{.}!{(1.5,-0.48)}*{.}
!{(-0.5,-0.4)}*{.}!{(-0.5,-0.44)}*{.}!{(-0.5,-0.48)}*{.}
!{(0.5,-0.4)}*{.}!{(0.5,-0.44)}*{.}!{(0.5,-0.48)}*{.}
"1"-"2" "1"-"3a" "1"-"4"
"3"-"5" "3b"-"3" "3"-"6z" "6z"-"6" "6z"-"6a" "3"-"7"
"5"-"10" "5"-"11" "7"-"15" "7"-"16" "6"-"13" "6"-"14" "6a"-"13a" "6a"-"14a"
}
\ee
\[
k \, = \, r + s - 1 \,,\quad 1 \, < \, s \hspace{1pt},\hspace{1pt} r \, < \,k
\]
In detail, we choose $s>1$ adjacent edges coming out of a vertex of valency $k$ and attach them to a new vertex
of valency $s$ together with the subtrees attached to them, then join the vertex of valency $s$
with the original vertex which now becomes of valency $r=k-s+1$.
In this case we have
\be{signstrees}
(-1)^{\varepsilon(\TRE,\OPR{\TRE})} \, = \, (-1)^{rs+(j+1)(s+1)} \,,
\ee
where $j=0,\dots,r-1$ represents the position of the group of $s$ adjacent edges.

We remark that one can treat the operations of type (a) as ``generalized'' operations of type (b)
with $s=0$ so that Eq. (\ref{signstrees}) remains true in the case of operations of type (a) as well.

We verify Eq. (\ref{formulam1}) using induction on the depth
of the tree $\TRE$. If this depth is 1 the identity (\ref{curvedAinfinity}) gives
\begin{multline*}
m_{1}\bigl(m_{k}(m_{0},\ldots,m_{0})\bigr) \, = \,
\sum_{j \, = \, 0}^{k} \,
(-1)^{j+1}
\, m_{k+1}(m_{0},\ldots,m_{0})+\\
+\sum_{\mathop{}\limits^{r+s-1 \, = \, k}_{s \, > \, 1}} \, \sum_{j \, = \, 0}^{r-1} \,
(-1)^{rs+(j+1)(s+1)}
\, m_{r}\bigl(m_{0},\ldots,m_{0},m_{s}(m_{0},\ldots,m_{0}),m_{0},\ldots,m_{0}\bigr)+\\
+ \sum_{j \, = \, 0}^{k-1} \,
(-1)^k
\, m_{k}(m_{0},\ldots,m_{0},m_{1}(m_{0}),m_{0},\ldots,m_{0}).
\end{multline*}
Note that the last sum vanishes due to the identity $m_{1}(m_{0})=0$.
Hence, we obtain formula (\ref{formulam1}) in the case of tree depth $1$ since the sum over the operations of type (a) corresponds to the sum in the first line and the sum over the operations of type (b) corresponds to the sum in the second line above.

If $\TRE$ has depth greater than $1$ we can write,
as in Remark~\ref{newrem1},
$\mathcal{M}_{\TRE}=m_{k}\bigl(\mathcal{M}_{\TRE_{1}},$ $\dots,$ $\mathcal{M}_{\TRE_{k}}\bigr)$
for some trees
$\TRE_{1},\ldots,\TRE_{k}$ whose depths are smaller than the depth of $\TRE$.
Using (\ref{curvedAinfinity}) again we obtain
\begin{multline*}
m_{1}\bigl(\mathcal{M}_{\TRE}\bigr) \, = \,
\sum_{j \, = \, 0}^{k} \,
(-1)^{j+1}
\, m_{k+1}\bigl(\mathcal{M}_{\TRE_{1}},\ldots,\mathcal{M}_{\TRE_{j}},
m_{0},\mathcal{M}_{\TRE_{j+1}},\ldots,\mathcal{M}_{\TRE_{k}}\bigr)+\\
+\sum_{\mathop{}\limits^{r+s-1 \, = \, k}_{s \, > \, 1}} \, \sum_{j \, = \, 0}^{r-1} \,
(-1)^{rs+(j+1)(s+1)}
\, m_{r}\bigl(\mathcal{M}_{\TRE_{1}},\ldots,m_{s}(\mathcal{M}_{\TRE_{j}},\ldots,\mathcal{M}_{\TRE_{j+s}})
,\ldots,\mathcal{M}_{\TRE_{r}}\bigr)+\\
+ \sum_{j \, = \, 0}^{k-1} \,
(-1)^k
\, m_{k}(\mathcal{M}_{\TRE_{1}},\ldots,m_{1}\bigl(\mathcal{M}_{\TRE_{j}}\bigr),\ldots,\mathcal{M}_{\TRE_{k}}\bigr).
\end{multline*}
Applying the inductive hypothesis to $m_{1}(\mathcal{M}_{\TRE_{j}})$ in the last sum,
we obtain (\ref{formulam1}), as we did above for the case of depth $1$.

It remains to show that
\be{treesum}
\sum_{\TRE \, \in \, \CET(2n)} \,
m_1 \bigl(\mathcal{M}_{\TRE}) \, = \,
\sum_{\TRE \, \in \, \CET(2n)} \ \sum_{\opr \, \in \, B(\TRE)} \, (-1)^{\varepsilon(\TRE,\OPR{\TRE})} \,
\mathcal{M}_{\OPR{\TRE}}
\ee
is in the kernel of $\Phi$.

Observe that all trees $\OPR{\TRE}$ in (\ref{treesum}) contain exactly one vertex of odd valency.
Indeed, this is the vertex where an operation either of type (a) or type (b) has been performed.
More precisely, in the type (b) case,
the operation is
applied to a vertex of even valency $k$
and we obtain two new vertices of valencies $r$ and $s$, respectively,
so that
exactly one of these two numbers is odd
due to the relation $r+s=k+1$.
When $s$ is even we will say that the operation is performed ``{\em below}'' the vertex of odd valency
in $\OPR{\TRE}$, otherwise we will say that the operation is performed ``{\em above}'' the vertex.
If the operation performed is of type (a) we will treat it as a ``generalized'' operation of type (b)
with $s=0$ and will say that is performed ``below'' the vertex.

We separate the sum in (\ref{treesum}) into two subsums: (1) terms $\mathcal{M}_{\OPR{\TRE}}$
for which the vertex of odd valency is not the root of
$\OPR{\TRE}$ and (2) terms $\mathcal{M}_{\OPR{\TRE}}$ for which this vertex is the root of $\OPR{\TRE}$.

We claim that the subsum (1) is identically zero.
Indeed, to each tree $\OPR{\TRE}$ whose odd vertex is not the root there correspond even number terms
in the subsum (1) which appear with alternating signs.
To show this, observe that such a tree $\OPR{\TRE}$ can be obtained from completely even trees $\TRE$
by performing one operation ``above'' the odd vertex and an odd number
(which  equals to the valency of this vertex) of  operations performed ``below''.
Now using (\ref{signstrees}) we see that the term that corresponds to the operation performed ``above''
appears with positive sign and the remaining odd number of terms appear with alternating signs,
$(-1)^{rs+(j+1)(s+1)}$ $=$ $(-1)^{j+1}$ ($j=0,\dots,r-1$),
beginning with minus.
All these terms cancel and the claim is proved.

Now consider the subsum (2).
To each tree $\OPR{\TRE}$ whose odd vertex is the root there correspond odd terms in this subsum.
Indeed, such a tree $\OPR{\TRE}$ can be obtained from $\TRE$ by performing odd number of operations ``below'' only.
Using (\ref{signstrees}) again we conclude that the signs of these terms alternate beginning with minus sign.
Thus, after cancelation only one term corresponding to every  $\OPR{\TRE}$ remains.
As a result, we obtain
\be{REC1}
\sum_{\TRE \, \in \, \CET(2n)} \,
m_1 \bigl(\mathcal{M}_{\TRE}) \, = \,
- \smtwo{k \text{ -- odd}}{1 < k \leqslant n} \
\smtwo{\TRE_j \in \CET(2n_j)}{n_1+\cdots+n_k = n}
m_k \bigl(\mathcal{M}_{\TRE_1},\dots,\mathcal{M}_{\TRE_k}\bigr) \,.
\ee
Now Property (ii)
in Definition \ref{cwtriple} implies that the subsum (2) is in the kernel of $\Phi$ and the theorem is proved.
\end{proof}

\begin{remark}
The action of $m_{1}$ on the trees considered above essentially coincides with that of the differential in
Kontsevich's graph cohomology complex \cite{K}. Namely, the result of the application of $m_{1}$ is a sum
(with appropriate signs) of trees that are obtained from the original tree by expanding a vertex to an edge.
\end{remark}

\begin{definition}
We call the cohomology classes defined by $\frac{1}{n!c_{n-1}}\cdot\mathcal{C}_{n},n=1,2,\ldots$
the components of the {\bf  Chern character} of the Chern-Weil triple
$(A,\hspace{-0.5pt}\Phi,\hspace{-0.5pt}C)$
and denote them by $\textbf{Ch}^{\bullet}(A,\hspace{-0.5pt}\Phi,\hspace{-0.5pt}C)$.
\end{definition}

In the definition above we divide by ${c_{n-1}}$, the number of all {\it binary} trees appearing in the sum (\ref{cocycles}), in order to obtain the usual definition of Chern character when $A$ is associative.

\subsection{Pre-Chern elements}\label{PreCh}

Equation (\ref{cocycles}) suggests that we introduce
\[
\CCW_{n} \, := \,
\sum_{\TRE \, \in \, \CET(2n)} \
\mathcal{M}_{\TRE}
\]
(Eq. (\ref{cocycles}) then reads $\mathcal{C}_{n} = \Phi\bigl(\CCW_{n}\bigr)$),
which we call \textbf{pre-Chern elements}.
From the recursion in Remark \ref{newrem1} we obtain
\be{REC2}
\CCW_n \, = \,
\smtwo{k \text{ -- even}}{2 \leqslant k \leqslant n} \
\sum_{n_1+\cdots+n_k \, = \, n}
\, m_k \bigl(\CCW_{n_1},\dots,\CCW_{n_k}\bigr) \,.
\ee
On the other hand, Eq.~(\ref{REC1}) implies that
\be{EQ1}
m_1 \bigl(\CCW_n\bigr) \, = \,
- \smtwo{k \text{ -- odd}}{1 < k \leqslant n} \
\sum_{n_1+\cdots+n_k \, = \, n}
\, m_k \bigl(\CCW_{n_1},\dots,\CCW_{n_k}\bigr) \,,\quad \text{or}
\ee
\be{EQ1-1}
\smtwo{k \text{ -- odd}}{1 \leqslant k \leqslant n} \
\sum_{n_1+\cdots+n_k \, = \, n}
\, m_k \bigl(\CCW_{n_1},\dots,\CCW_{n_k}\bigr) \, = \, 0\,.
\ee
Thus, we see that Eq.~(\ref{EQ1}), which can be easily verified using Eq.~(\ref{REC2}) and Eq.~(\ref{Ainfide}), is essentially what is needed to prove Theorem~\ref{defchar}.

It is instructive to give an independent proof of Eq.~(\ref{EQ1-1}).
To this end it is convenient to introduce the formal sum
\[
\mbf{\CCW} \, = \, \mathop{\sum}\limits_{n \, = \, 1}^{\infty} \CCW_n \,,
\]
which is nothing but the formal generating series of all $\CCW_n$:
the degree $2n$ part of $\CCW$ is $\CCW_n$.
Then Eq.~(\ref{REC2}) becomes
\be{REC3}
\mbf{\CCW} \, = \, \mathop{\sum}\limits_{k \, = \, 0}^{\infty} m_{2k} \Bigl(\mbf{\CCW}^{\otimes \, 2k}\Bigr) \, ,
\ee
while Eq.~(\ref{EQ1-1}) reads
\be{EQ1-2}
\mathop{\sum}\limits_{k \, = \, 1}^{\infty} m_{2k-1} \Bigl(\mbf{\CCW}^{\otimes \, (2k-1)}\Bigr) \, = \, 0 \,.
\ee
Now, to prove (\ref{EQ1-2}) we use the $A_{\infty}$-identity (\ref{Ainfide}) to find that
\be{tmpeq-x1}
\sum_{r+s+t \, = \, n}
(-1)^{r+st} \, m_{r+t+1}
\Bigl(\mbf{\CCW}^{\otimes{r}}\otimes m_{s} \bigl( \mbf{\CCW}^{\otimes{s}}\bigr) \otimes \mbf{\CCW}^{\otimes{t}}\Bigr)
\, = \, 0 \,.
\ee
Setting
$\mbf{\CCW'} := \mathop{\sum}\limits_{k \, = \, 1}^{\infty} m_{2k-1} \Bigl(\mbf{\CCW}^{\otimes \, (2k-1)}\Bigr)$
and summing (\ref{tmpeq-x1}) over {\it even} $n$,
we obtain
\begin{align}\label{calc2}
0 \, & = \,
\mathop{\sum}\limits_{N \, = \, 0}^{\infty} \
\mathop{\sum}\limits_{r+s+t\, = \, 2N}
(-1)^{r+st} \, m_{r+t+1}
\Bigl(\mbf{\CCW}^{\otimes{r}}\otimes m_{s} \bigl( \mbf{\CCW}^{\otimes{s}}\bigr) \otimes \mbf{\CCW}^{\otimes{t}}\Bigr)
\nonumber \\
& = \,
\smtwo{s = 0,2,\dots \text{(even)}}{r+t+1 =  1,3,\dots \text{(odd)}} \!
(-1)^{r} \, m_{r+t+1}
\Bigl(\mbf{\CCW}^{\otimes{r}}\otimes m_{s} \bigl( \mbf{\CCW}^{\otimes{s}}\bigr) \otimes \mbf{\CCW}^{\otimes{t}}\Bigr)
\nonumber \\
& \hspace{13pt} + \!
\smtwo{s = 1,3,\dots \text{(odd)}}{r+t+1 =  2,4,\dots \text{(even)}} \!
\mathop{\underbrace{(-1)^{r+t}}}\limits_{(-1)} \ m_{r+t+1}
\Bigl(\mbf{\CCW}^{\otimes{r}}\otimes m_{s} \bigl( \mbf{\CCW}^{\otimes{s}}\bigr) \otimes \mbf{\CCW}^{\otimes{t}}\Bigr)
\nonumber \\
& = \,
\mathop{\sum}\limits_{k \, = \, 1}^{\infty}
\,
\Biggl(\mathop{\sum}\limits_{r \, = \, 0}^{2k-2} (-1)^r\Biggr)
\,
m_{2k-1}
\left(\mbf{\CCW}^{\otimes{(2k-1)}}\right)
\nonumber \\ & \hspace{13pt}
-
\mathop{\sum}\limits_{k \, = \, 1}^{\infty}
\,
\mathop{\sum}\limits_{r \, = \, 0}^{2k-1}
\,
m_{2k}
\left(\mbf{\CCW}^{\otimes{r}} \otimes \mbf{\CCW'} \otimes \mbf{\CCW}^{\otimes{(2k-r-1)}}\right)
\nonumber \\
& = \,
\mbf{\CCW'}
-
\mathop{\sum}\limits_{k \, = \, 1}^{\infty}
\,
\mathop{\sum}\limits_{r \, = \, 0}^{2k-1}
\,
m_{2k}
\left(\mbf{\CCW}^{\otimes{r}} \otimes \mbf{\CCW'} \otimes \mbf{\CCW}^{\otimes{(2k-r-1)}}\right)
\nonumber
\end{align}
and hence
\be{tmpeq-x2}
\mbf{\CCW'} \, = \,
\mathop{\sum}\limits_{k \, = \, 1}^{\infty}
\,
\mathop{\sum}\limits_{r \, = \, 0}^{2k-1}
\,
m_{2k}
\left(\mbf{\CCW}^{\otimes{r}} \otimes \mbf{\CCW'} \otimes \mbf{\CCW}^{\otimes{(2k-r-1)}}\right) \,.
\ee
The homogeneous part of degree $2n-1$ in Eq.~(\ref{tmpeq-x2}) gives
\be{tmpeq-x3}
\CCW'_n \, = \,
\mathop{\sum}\limits_{k \, = \, 1}^{\infty}
\
\smtwo{n_1+\cdots+n_{2k} = n+k}{n_1,\dots,n_{2k} \geqslant 1}
\
\mathop{\sum}\limits_{r \, = \, 0}^{2k-1} \,
m_{2k}
\left(\CCW_{n_1},\dots,\CCW_{n_r} \CCW'_{n_{r+1}}, \CCW_{n_{r+2}},\dots, \CCW_{n_{2k}}\right) \,
\ee
($\mbf{\CCW'} = \mathop{\sum}\limits_{n \, = \, 1}^{\infty} \CCW'_n$, $\deg \CCW'_n = 2n-1$).
Now Eq.~(\ref{tmpeq-x3}) easily implies by induction on $n$ that $\CCW'_n = 0$ for all $n$,
since $\CCW_1=m_0$ and $\CCW'_1 = m_1 (m_0) = 0$.

\medskip

Next we discuss the independence of our Chern character on the choice of ``connection''.

\begin{proposition}\label{indcon}  Let $A$ be a finite dimensional curved $A_{\infty}$-algebra. For $\gamma \in A_{1}$ let $A^{\gamma}$ denote the deformed algebra described in Proposition \ref{ainfdeform}. Assume that the series (\ref{deforma}) are uniformly convergent with respect to $\gamma$ on compact subsets of $A_{1}$. Suppose further that for every $\gamma \in A_{1}$ we have a Chern-Weil triple $(A^{\gamma}, \Phi, C)$. Then the first two components of the Chern characters of these Chern-Weil triples do not depend on $\gamma$.
\end{proposition}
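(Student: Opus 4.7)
The plan is to use a path-of-connections argument. Fix a smooth path $\gamma_{t} \in A_{1}$, $t \in [0,1]$, joining $\gamma_{0}$ to $\gamma_{1}$, e.g. $\gamma_{t} = (1-t)\gamma_{0}+t\gamma_{1}$ with derivative $\eta := \dot\gamma_{t} = \gamma_{1}-\gamma_{0}$. The uniform convergence hypothesis ensures that $t \mapsto \mathcal{C}_{n}^{\gamma_{t}}$ is differentiable, so
\[
\mathcal{C}_{n}^{\gamma_{1}} - \mathcal{C}_{n}^{\gamma_{0}} \;=\; \int_{0}^{1}\tfrac{d}{dt}\mathcal{C}_{n}^{\gamma_{t}}\, dt \, ,
\]
and it suffices to produce, for $n = 1, 2$, an explicit transgression cochain $T_{n} \in C^{2n-1}$ with $d T_{n}$ equal to this integral.

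The first step is a variation formula. Direct term-by-term differentiation of (\ref{deforma}), combined with the observation that both $\gamma_{t}$ and $\eta$ have degree $1$ so that $\chi(\sigma) = +1$ for each relevant $(k,1)$-unshuffle, yields
\[
\tfrac{d}{dt}\,m_{0}^{\gamma_{t}} \;=\; m_{1}^{\gamma_{t}}(\eta) \, ,
\]
and hence $\frac{d}{dt}\mathcal{C}_{1}^{\gamma_{t}} = \Phi(m_{1}^{\gamma_{t}}(\eta))$. For $\mathcal{C}_{2}^{\gamma_{t}} = \Phi(m_{2}^{\gamma_{t}}(m_{0}^{\gamma_{t}},m_{0}^{\gamma_{t}}))$, the Leibniz rule together with the analogous formula $\partial_{\eta} m_{2}^{\gamma_{t}}(x,y) = \sum_{j=0}^{2}(\pm)\, m_{3}^{\gamma_{t}}(\ldots,\eta,\ldots)$ and the abstract Bianchi identity (\ref{cdg1}) applied inside $A^{\gamma_{t}}$ rearrange $\frac{d}{dt}\mathcal{C}_{2}^{\gamma_{t}}$ as $\Phi\bigl(m_{1}^{\gamma_{t}}(Z_{t})\bigr) + \Phi(R_{t})$, where $Z_{t} := m_{2}^{\gamma_{t}}(\eta,m_{0}^{\gamma_{t}}) + m_{2}^{\gamma_{t}}(m_{0}^{\gamma_{t}},\eta) \in A_{3}$ and $R_{t}$ is a residue of $m_{3}^{\gamma_{t}}$-terms in which $\eta$ and two copies of $m_{0}^{\gamma_{t}}$ occur. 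The candidate transgressions are then $T_{1} = \Phi(\eta) \in C^{1}$ and $T_{2} = \int_{0}^{1} \Phi(Z_{t})\, dt \in C^{3}$.

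The key identities to verify are $\Phi(m_{1}^{\gamma_{t}}(\eta)) = d\Phi(\eta)$, $\Phi(m_{1}^{\gamma_{t}}(Z_{t})) = d\Phi(Z_{t})$, and $\Phi(R_{t}) = 0$. Since $\eta$ and $Z_{t}$ are of odd degree, condition (i) of Definition \ref{cwtriple} does not literally apply. To overcome this I would exploit the hypothesis that $(A^{\gamma'},\Phi,C)$ is a Chern-Weil triple for \emph{every} $\gamma' \in A_{1}$: differentiating the relation $\Phi(m_{1}^{\gamma'}(x)) = d\Phi(x)$ (valid for even $x$) repeatedly in the $\gamma'$-direction, and using the expansion of $m_{1}^{\gamma'}$ in powers of $\gamma'$, should yield after careful Koszul-sign bookkeeping the required chain-map relations on the specific odd arguments $\eta$ and $Z_{t}$. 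The vanishing $\Phi(R_{t}) = 0$ should similarly follow by symmetrizing the three $m_{3}^{\gamma_{t}}$-terms over the two $m_{0}^{\gamma_{t}}$-slots and invoking a $\gamma'$-differentiated version of condition (ii) for $A^{\gamma_{t}}$ with inputs $(m_{0}^{\gamma_{t}}, m_{0}^{\gamma_{t}}, \eta)$.

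The principal obstacle is exactly this treatment of odd-degree inputs: the axioms of a Chern-Weil triple control $\Phi \circ m_{k}^{\gamma}$ only on even arguments, whereas every transgression inevitably involves the odd element $\eta$. I expect the restriction to $n = 1, 2$ in the proposition is not incidental but reflects the limits of the above $\gamma'$-differentiation trick: for these two components the variation only involves $m_{1}^{\gamma_{t}}, m_{2}^{\gamma_{t}}, m_{3}^{\gamma_{t}}$, whose relevant identities on odd inputs are obtainable by at most one $\gamma'$-differentiation of (i) and (ii); higher Chern character components would require $m_{k}^{\gamma_{t}}$ with $k \geq 4$ and symmetry properties of $\Phi$ that are not supplied by the Chern-Weil triple axioms alone.
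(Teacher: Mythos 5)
Your overall strategy (move along a path in $A_{1}$ and exhibit the $\gamma$-derivative of each Chern cocycle as a coboundary) is the same as the paper's, which phrases it via the De Rham differential on $A_{1}$ and the tautological $1$-form following Positselski: one checks $d_{R}m_{0}^{\gamma}=m_{1}^{\gamma}d_{R}\gamma$ and $d_{R}\CCW_{2}^{\gamma}=m_{1}^{\gamma}\omega$, and then applies condition (i) of Definition \ref{cwtriple} to the expressions so obtained. The genuine gap in your proposal is exactly the step you flag as the principal obstacle: the identities $\Phi(m_{1}^{\gamma_{t}}(\eta))=d\Phi(\eta)$, $\Phi(m_{1}^{\gamma_{t}}(Z_{t}))=d\Phi(Z_{t})$ and $\Phi(R_{t})=0$ cannot be produced by differentiating (i) and (ii) in the $\gamma'$-direction. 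Differentiating (i) annihilates the right-hand side $d\Phi(x)$ (it does not depend on $\gamma'$), and since a $\gamma'$-derivative replaces $m_{k}^{\gamma'}$ by $m_{k+1}^{\gamma'}$ with one slot occupied by $\eta$, the first derivative gives only the trace-type relation $\Phi\bigl(m_{2}^{\gamma}(\eta,x)-m_{2}^{\gamma}(x,\eta)\bigr)=0$ for even $x$, and higher derivatives give analogous statements about $\Phi\circ m_{k+1}^{\gamma}$ with several $\eta$'s inserted; no derivative of (i) or (ii) ever yields a formula whose right-hand side contains $\Phi(\eta)$ or $\Phi(Z_{t})$, so the needed chain-map relations on these odd inputs are not consequences. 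Similarly, differentiating (ii) for $n=3$ constrains $\Phi\circ m_{4}^{\gamma}$, not the $m_{3}^{\gamma}$-terms occurring in your $R_{t}$, and there is no $n=2$ instance of (ii) in the hypotheses to differentiate into an $m_{3}$-constraint, since (ii) is assumed only for odd $n>1$.

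That this is not a matter of Koszul-sign bookkeeping can be seen on a two-dimensional example: take $A=\textbf{k}e_{1}\oplus\textbf{k}e_{2}$ with $\deg e_{1}=1$, $\deg e_{2}=2$, $m_{2}(e_{1},e_{1})=e_{2}$ and all other operations zero, and take $C=\textbf{k}$ placed in degree $2$ with $d=0$, $\Phi(e_{2})=1$, $\Phi(e_{1})=0$. For every $\gamma=ce_{1}$ all even-degree instances of (i) and (ii) hold (the only even elements are multiples of $e_{2}$, $m_{1}^{\gamma}(e_{2})=0$, and $m_{n}^{\gamma}=0$ for $n\geqslant 3$), yet $\Phi(m_{0}^{\gamma})=c^{2}$ varies with $\gamma$. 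Hence no argument using only the even-degree instances of (i) and (ii) can establish even the statement for the first component; the proof genuinely needs the chain-map property (and, for the second component, the relevant symmetry) on the odd-degree elements that actually occur, namely $\eta$, $Z_{t}$ and the arguments entering $R_{t}$. This is how the paper's proof proceeds: after establishing $d_{R}m_{0}^{\gamma}=m_{1}^{\gamma}d_{R}\gamma$ and $d_{R}\CCW_{2}^{\gamma}=m_{1}^{\gamma}\omega$ it invokes condition (i) directly on the resulting inputs, rather than deriving it from the even-degree axioms. To complete your argument you must likewise assume (or verify for the concrete triples in question) these conditions on the odd-degree inputs; the $\gamma'$-differentiation trick cannot close this gap.
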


\begin{proof} We adapt the argument given in \cite[Section 5.2]{Po}. We denote by $d_{R}$ the DeRham differential on $\Omega^{\bullet}(A_{1},A)$, the complex of smooth differential forms on $A_{1}$ with values in $A$. Since $A$ is finite dimensional we can define for every $\gamma \in A_{1}$ the tautological 1-form $d_{R}\gamma \in \Omega^{1}(A_{1},A_{1})$. Then it is not hard to check, differentiating term-by-term with respect to $\gamma$, that we have
\be{twodif}d_{R}m_{0}^{\gamma}= m_{1}^{\gamma} d_{R}\gamma,
\ee
and the claim for the first component of the Chern character easily follows. A simple computation using (\ref{twodif}) and (\ref{curvedAinfinity}) shows that
$d_{R}\CCW_{2}^{\gamma}=m_{1}^{\gamma} \omega$ for some $\omega \in \Omega^{1}(A_{1},A)$ which completes the proof.
\end{proof}

\subsection{Morphisms of Chern-Weil triples}

\begin{definition}\label{cwmorph} Let $(A,\Phi,C)$ and $(B,\Psi,D)$ be two Chern-Weil triples.
Let $F:A \rightarrow B$ be a morphism of $A_{\infty}$-algebras and let $f:C \rightarrow D$ be a chain map.
We say that the pair $(F,f)$ is a {\bf morphism of Chern-Weil triples} if for every
even $x$ in $A$, every odd $n>1$
and all even $x_{1},\ldots,x_{n} \in A$ the following conditions hold.
\begin{eqnarray*}\text{(i)} & \Psi (F_{1}(x))=f(\Phi(x))\\
\text{(ii)} & \sum_{\sigma \in \textbf{S}_{n}}\Psi(F_{n}(x_{\sigma (1)},\ldots, x_{\sigma(n)}))=0.
\end{eqnarray*}
\end{definition}

Clearly Chern-Weil triples with this notion of morphism do {\em not} form a category.
It turns out however that the Chern character we have defined is natural with respect to certain class of morphisms.
This will easily follow from our next lemma.

Let us introduce as in Sect.~\ref{PreCh} the pre-Chern elements
 $\CCW_n^A$ and $\CCW_n^B$, and their formal sums $\mbf{\CCW_A}$ and $\mbf{\CCW_B}$.

\begin{lemma}\label{LM-X}
Let $F = \{F_k\} : \bigl(A,\{m_k^A\}\bigr) \to \bigl(B,\{m_k^B\}\bigr)$ be a morphism
of two curved $A_{\infty}$-algebras such that  $F_k$ vanishes for all even $k$
Then
\begin{eqnarray}\label{evenF}
\mbf{\CCW_B} &&  = \, \mathop{\sum}\limits_{k \, = \, 1}^{\infty}
F_{2k-1} \Bigl(\mbf{\CCW}^{\otimes (2k-1)}_{\mbf{A}}\Bigr) \,,
\end{eqnarray}
or equivalently
\begin{eqnarray}
\label{evenF-1}
\CCW^B_n &&  = \,
\mathop{\sum}\limits_{k \, = \, 1}^{\infty}
\
\smtwo{n_1+\cdots+n_{2k-1}}{= n+k-1}
\,
F_{2k-1}
\left(\CCW_{n_1}^A,\dots,\CCW_{n_{2k-1}}^A\right) \,.
\end{eqnarray}
(Note that the sum over $k$ in (\ref{evenF-1}) is in fact finite.)
\end{lemma}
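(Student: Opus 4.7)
The plan is to exhibit
\[
\mbf{Q} \, := \, \sum_{k \, = \, 1}^{\infty} F_{2k-1}\bigl(\mbf{\CCW}_A^{\otimes (2k-1)}\bigr)
\]
as a solution, in $B$, of the master recursion (\ref{REC3}) that characterizes $\mbf{\CCW}_B$, and then to invoke uniqueness of that solution to conclude $\mbf{Q} = \mbf{\CCW}_B$. The identity (\ref{evenF}) is immediate, and (\ref{evenF-1}) is its degree-$2n$ component. Each homogeneous piece of $\mbf{Q}$ is a finite sum: in degree $2n$ only $1 \leqslant k \leqslant n$ can contribute (since $2k-1$ operands each of degree $\geqslant 2$ must land in degree $2n$), so nothing formal is swept under the rug.

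The engine of the argument is the $A_{\infty}$-morphism identity (\ref{ainmorph}) combined with (\ref{ainfhom1}). I substitute $\mbf{\CCW}_A$ in every slot and sum over all arities $n \geqslant 0$; because every component of $\mbf{\CCW}_A$ is of even degree, no extra Koszul signs appear. On the left, for fixed $r,t$ the inner sum over $s$ reads $\sum_{s}(-1)^{st} m_s^A(\mbf{\CCW}_A^{\otimes s})$, which splits by parity: (\ref{REC3}) for $A$ contributes $\mbf{\CCW}_A$ from even $s$, while (\ref{EQ1-2}) for $A$ kills the odd $s$; the factor $(-1)^{st}$ is $+1$ on the surviving terms, so the inner expression collapses to $\mbf{\CCW}_A$ independently of $r,t$. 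The LHS thus becomes $\sum_{k \geqslant 1}\bigl(\sum_{r=0}^{k-1}(-1)^r\bigr) F_k(\mbf{\CCW}_A^{\otimes k})$, and the alternating-sign factor projects onto odd $k$, producing exactly $\mbf{Q}$.

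On the right, the hypothesis $F_{2j} = 0$ forces every $i_\ell$ to be odd, whence $(i_\ell - 1)$ is even and the sign $(-1)^w = (-1)^{\sum (q-\ell)(i_\ell - 1)}$ equals $+1$. Collapsing the nested sums therefore yields RHS $= \sum_{q \geqslant 0} m_q^B\bigl(\mbf{Q}^{\otimes q}\bigr)$, where the $q = 0$ term $m_0^B$ comes from (\ref{ainfhom1}) at $n=0$. Combining, $\mbf{Q} = \sum_{q \geqslant 0} m_q^B\bigl(\mbf{Q}^{\otimes q}\bigr)$. Since $\mbf{Q}$ has only even-degree components (as $\deg F_{2k-1} = 2-2k$ is even), the odd-$q$ terms on the right contribute only in odd degrees and must vanish on their own, leaving $\mbf{Q} = \sum_{k \geqslant 0} m_{2k}^B\bigl(\mbf{Q}^{\otimes 2k}\bigr)$, which is (\ref{REC3}) for $\mbf{Q}$ in $B$.

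Finally, (\ref{REC3}) together with the datum in degree $2$ has a unique solution: in degree $2$ only the $k=0$ term of the recursion survives and forces the degree-$2$ piece to equal $m_0^B$, while for $n \geqslant 2$ the recursion expresses the degree-$2n$ piece in terms of strictly lower-degree pieces. Since $Q_1 = F_1(m_0^A) = m_0^B = \CCW_1^B$ by (\ref{ainfhom1}), induction on degree yields $\mbf{Q} = \mbf{\CCW}_B$, which is (\ref{evenF}). The main subtlety is sign bookkeeping: checking that $(-1)^{r+st}$ projects the LHS onto odd arities in $F$, and that $(-1)^w$ trivializes on the RHS precisely because the even-arity $F$'s vanish; once these conspiracies are verified, everything else is routine.
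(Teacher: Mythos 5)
Your proof is correct and follows essentially the same route as the paper: substitute $\mbf{\CCW_A}$ into the morphism identity (\ref{ainmorph}) together with (\ref{ainfhom1}), collapse the left side to the odd-arity transfer via (\ref{REC3}) and (\ref{EQ1-2}) for $A$, and identify the result with a solution of the recursion (\ref{REC3}) in $B$, which determines $\mbf{\CCW_B}$ uniquely degree by degree. The only difference is that you impose $F_{2k}=0$ from the outset (so $(-1)^{w}=+1$ immediately), whereas the paper first treats a general morphism via the alternating elements $\mbf{\CCw^{\pm}_{A,B}}$ and specializes afterwards; this is a harmless streamlining, not a different argument.
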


\begin{proof}
Consider first a general morphism $F = \{F_k\} : \bigl(A,\{m_k^A\}\bigr) \to \bigl(B,\{m_k^B\}\bigr)$
of two curved $A_{\infty}$-algebras. We can transfer $\mbf{\CCW_A}$ to the following elements in $B$
\be{CCw}
\mbf{\CCw^{\pm}_{A,B}} \, := \, \mathop{\sum}\limits_{k \, = \, 1}^{\infty}
(\pm 1)^{k-1} F_k \Bigl(\mbf{\CCW}^{\otimes k}_{\mbf{A}}\Bigr) \,.
\ee
Thus $\mbf{\CCw^{\pm}_{A,B}}$ are the formal generating series for $\CCw_{n}^{A,B}$ which are defined by
\[
\mbf{\CCw^{\pm}_{A,B}} \, := \, \mathop{\sum}\limits_{n \, = \, 2}^{\infty} (\pm 1)^n \, \CCw_{n}^{A,B}
\,, \qquad
\deg \, \CCw_{n}^{A,B} \, = \, n  \,
\]
and Eq.~(\ref{CCw}) reads
\be{CCw1}
\CCw_{n}^{A,B} \, = \,
\mathop{\sum}\limits_{k \, = \, 1}^{\infty}
\
\smtwo{2n_1+\cdots+2n_k}{= n+k-1}
\,
F_k
\left(\CCW_{n_1}^A,\dots,\CCW_{n_k}^A\right) \,.
\ee
It is convenient to split $\mbf{\CCw^{+}_{A,B}}$ into even and odd part, $\mbf{\CCw^0_{A,B}}$ and $\mbf{\CCw^1_{A,B}}$,
respectively, so that
\[
\mbf{\CCw^{\pm}_{A,B}} \, = \, \mbf{\CCw^0_{A,B}} \pm \mbf{\CCw^1_{A,B}}
\,,\qquad
\mbf{\CCw^{0/1}_{A,B}} \, = \, \frac{1}{2} \bigl( \mbf{\CCw^{+}_{A,B}} \pm \mbf{\CCw^{-}_{A,B}} \bigr)\,,
\]
\[
\mbf{\CCw^0_{A,B}} \, := \, \mathop{\sum}\limits_{k \, = \, 1}^{\infty}
F_{2k-1} \Bigl(\mbf{\CCW}^{\otimes (2k-1)}_{\mbf{A}}\Bigr)
\,,\qquad
\mbf{\CCw^1_{A,B}} \, := \, \mathop{\sum}\limits_{k \, = \, 1}^{\infty}
F_{2k} \Bigl(\mbf{\CCW}^{\otimes 2k}_{\mbf{A}}\Bigr) \,.
\]

We now proceed as in Sect.~\ref{PreCh} and start with the identity for $A_{\infty}$-morphism (\ref{ainmorph}):
\[
\sum_{r+s+t \, = \, n}\!(-1)^{r+st} \, F_{r+t+1}
\Bigl(\mbf{\CCW}^{\otimes{r}}_{\mbf{A}}\otimes m^A_{s} \bigl(\mbf{\CCW}^{\otimes s}_{\mbf{A}}\bigr) \otimes
\mbf{\CCW}^{\otimes{t}}_{\mbf{A}}\Bigr)
\]
\be{ainmorph-1}
\, = \,
\hspace{-7pt}
\smtwo{1 \, \leqslant \, q \, \leqslant \, n}{i_{1}+ \ldots + i_{q} \, = \, n}
\hspace{-9pt}
(-1)^{w} \,
m^B_{q} \Bigl(F_{i_{1}} \bigl(\mbf{\CCW}^{\otimes i_1}_{\mbf{A}}\bigr),\dots,
F_{i_{q}} \bigl(\mbf{\CCW}^{\otimes i_q}_{\mbf{A}}\bigr)\Bigr)\,.
\ee
Using Eq. (\ref{ainfhom1}) and summing (\ref{ainmorph-1}) over $n$, we obtain
\be{ainmorph-2}
\sum_{r,s,t \, \geqslant \, 0}\!(-1)^{r+st} \, F_{r+t+1}
\Bigl(\mbf{\CCW}^{\otimes{r}}_{\mbf{A}}\otimes m^A_{s} \bigl(\mbf{\CCW}^{\otimes s}_{\mbf{A}}\bigr) \otimes
\mbf{\CCW}^{\otimes{t}}_{\mbf{A}}\Bigr)
\ee
\[
\, = \,
\hspace{-7pt}
\smtwo{q \, \geqslant \, 0 }{i_{1},\ldots,i_{q} \, \geqslant \, 1}
\hspace{-9pt}
m^B_{q}
\Bigl(
(\pm 1)^{i_1-1} \, F_{i_{1}} \bigl(\mbf{\CCW}^{\otimes i_1}_{\mbf{A}}\bigr),\dots,
(-1)^{i_{q-1}-1} \, F_{i_{q-1}} \bigl(\mbf{\CCW}^{\otimes i_{q-1}}_{\mbf{A}}\bigr),
F_{i_{q}} \bigl(\mbf{\CCW}^{\otimes i_q}_{\mbf{A}}\bigr)
\Bigr)\,
\]
The right hand side of Eq.~(\ref{ainmorph-2}) contains alternating sign factors $(-1)^{(q-\ell)(i_{\ell}-1)}$
and simply gives
\be{PME}
\mathop{\sum}\limits_{q \, = \, 0}^{\infty}
m_q^B \bigl(\mbf{\CCw^{\pm}_{A,B}},\dots,\mbf{\CCw^{-}_{A,B}},\mbf{\CCw^{+}_{A,B}}\bigr) \, ,
\ee
where the signs in the arguments alternate, so that sign in the first argument is $+$  when $q$ is odd and $-$ when $q$ is even.
The  left hand side of Eq.~(\ref{ainmorph-2}) can be transformed as follows
\begin{align}\label{calc3}
&
\sum_{r,s,t \, \geqslant \, 0}\!(-1)^{r+st} \, F_{r+t+1}
\Bigl(\mbf{\CCW}^{\otimes{r}}_{\mbf{A}}\otimes m^A_{s} \bigl(\mbf{\CCW}^{\otimes s}_{\mbf{A}}\bigr) \otimes
\mbf{\CCW}^{\otimes{t}}_{\mbf{A}}\Bigr)
\nonumber \\ &
= \,
\sum_{r,t \, \geqslant \, 0} \, (-1)^{r} \, F_{r+t+1}
\Biggl(\mbf{\CCW}^{\otimes{r}}_{\mbf{A}}\otimes
\Biggl(\sum_{s \, = \, 0,2,\dots \text{ (even)}}
m^A_{s} \bigl(\mbf{\CCW}^{\otimes s}_{\mbf{A}}\bigr) \Biggr) \otimes \mbf{\CCW}^{\otimes{t}}_{\mbf{A}}\Biggr)
\nonumber \\ &
\hspace{13pt} + \,
\sum_{r,t \, \geqslant \, 0} \,
(-1)^{r+t} \ F_{r+t+1}
\Biggl(\mbf{\CCW}^{\otimes{r}}_{\mbf{A}} \otimes
\Biggl(\sum_{s \, = \, 1,3,\dots \text{ (odd)}}
m^A_{s} \bigl(\mbf{\CCW}^{\otimes s}_{\mbf{A}}\bigr) \Biggr) \otimes \mbf{\CCW}^{\otimes{t}}_{\mbf{A}}\Biggr)
\nonumber \\ &
= \,
\sum_{r,t \, \geqslant \, 0} \, (-1)^{r} \, F_{r+t+1}
\Bigl(\mbf{\CCW}^{\otimes{r}}_{\mbf{A}}\otimes \mbf{\CCW_A} \otimes \mbf{\CCW}^{\otimes{t}}_{\mbf{A}}\Bigr)
\nonumber \\ &
= \,
\sum_{\ell = 1}^{\infty} \Biggl( \sum_{r \, = \, 0}^{\ell-1} \, (-1)^{r} \Biggr) \, F_{\ell}
\Bigl(\mbf{\CCW}^{\otimes{\ell}}_{\mbf{A}}\Bigr)
\, = \,
\sum_{k = 1}^{\infty} \, F_{2k-1}
\Bigl(\mbf{\CCW}^{\otimes{(2k-1)}}_{\mbf{A}}\Bigr) \, = \, \mbf{\CCw^0_{A,B}} \,.
\nonumber
\end{align}
Thus Eq.~(\ref{ainmorph-2}) gives us
\begin{eqnarray}\label{ainmorph-3}
\hspace{-50pt} &&
\mbf{\CCw^0_{A,B}} \, = \,
\mathop{\sum}\limits_{q \, = \, 0}^{\infty}
m_q^B \bigl(\mbf{\CCw^{\pm}_{A,B}},\dots,\mbf{\CCw^{-}_{A,B}},\mbf{\CCw^{+}_{A,B}}\bigr) \,,
\\ \label{ainmorph-3-1}
\hspace{-50pt} &&
\Longleftrightarrow \hspace{6pt}
\left\{\raisebox{46pt}{\hspace{-4pt}}\right.
\begin{array}{rl}
\CCw_{2n}^{A,B} \hspace{-6pt} & = \,
\mathop{\displaystyle \sum}\limits_{q \, = \, 0}^{\infty}
\
{\displaystyle \smthr{n_1+\cdots+n_q}{= 2n+q-2}{n_1,\dots,n_q \geqslant 2}}
\,
(-1)^{\mathop{\sum}\limits_{\ell \, = \, 1}^q (q-\ell) \, n_{\ell}} \,
m_q^B \bigl(\CCw_{n_1}^{A,B},\dots,\CCw_{n_q}^{A,B}\bigr)
\\
0 \hspace{-1pt} & = \,
\mathop{\displaystyle \sum}\limits_{q \, = \, 0}^{\infty}
\
{\displaystyle \smthr{n_1+\cdots+n_q}{= 2n+q-1}{n_1,\dots,n_q \geqslant 2}}
\,
(-1)^{\mathop{\sum}\limits_{\ell \, = \, 1}^q (q-\ell) \, n_{\ell}} \,
m_q^B \bigl(\CCw_{n_1}^{A,B},\dots,\CCw_{n_q}^{A,B}\bigr)
\,.
\end{array}
\end{eqnarray}


Now let us assume that the $A_{\infty}$-morphism $F = \{F_k\}$ is such that $F_{k}=0$  for all even $k$.
It follows that
\[
\mbf{\CCw^{+}_{A,B}} \, = \, \mbf{\CCw^{-}_{A,B}} \, = \, \mbf{\CCw^0_{A,B}}
\,,\qquad \mbf{\CCw^1_{A,B}} \, = \, 0 \,.
\]
Hence Eq.~(\ref{ainmorph-3}) reads
\[
\mbf{\CCw^0_{A,B}} \, = \,
\mathop{\sum}\limits_{q \, = \, 0}^{\infty}
m_q^B \Bigl(\bigl(\mbf{\CCw^0_{A,B}}\bigr)^{\otimes{q}}\Bigr) \,,
\]
which is equivalent to
\[
\mbf{\CCw^0_{A,B}} \, = \,
\mathop{\sum}\limits_{k \, = \, 0}^{\infty}
m_{2k}^B \Bigl(\bigl(\mbf{\CCw^0_{A,B}}\bigr)^{\otimes{2k}}\Bigr)
\,, \qquad
\mathop{\sum}\limits_{k \, = \, 1}^{\infty}
m_{2k-1}^B \Bigl(\bigl(\mbf{\CCw^0_{A,B}}\bigr)^{\otimes{(2k-1)}}\Bigr) \, = \, 0 \,
\]
(since $\mbf{\CCw^0_{A,B}}$ is even).
But these equations recursively determine $\mbf{\CCW_B}$ and  we conclude that
\[
\mbf{\CCW_B} \, = \, \mbf{\CCw^0_{A,B}} \,, \qquad \text{i.e.,} \qquad
\CCW_k^B \, = \, \CCw_{2k}^{A,B}  \quad k=1,2,\dots
\]
The last identity together with Eq.~(\ref{CCw}) implies Eq. (\ref{evenF}) and the lemma is proved.
%
\end{proof}

\begin{proposition}\label{naturality}
Let $(F,f)$ be a morphism of Chern-Weil triples from  $(A,\Phi,C)$ to $(B,\Psi,D)$
such that $F_{k}$ vanishes for all even $k$. Then
\[f(\mathcal{C}^{A}_{n})=\mathcal{C}^{B}_{n}.\]
\end{proposition}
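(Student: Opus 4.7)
The plan is to reduce everything to Lemma~\ref{LM-X} and then apply the two defining properties of a Chern-Weil triple morphism. First I would apply Lemma~\ref{LM-X} to the morphism $F$ (which by hypothesis has all even components vanishing), obtaining the identity
\[
\mbf{\CCW_B} \, = \, \sum_{k \, = \, 1}^{\infty} F_{2k-1}\Bigl(\mbf{\CCW_A}^{\otimes(2k-1)}\Bigr)
\]
in the formal graded algebra. Applying $\Psi$ to both sides and extracting the degree $2n$ part, the left hand side gives $\Psi(\CCW_n^B) = \mathcal{C}_n^B$, so it suffices to show that the right hand side, after applying $\Psi$, reduces to $f(\mathcal{C}_n^A)$.

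Next I would separate the $k=1$ term from the $k>1$ terms. The $k=1$ term gives $\Psi(F_1(\CCW_n^A))$, which by property~(i) of Definition~\ref{cwmorph} equals $f(\Phi(\CCW_n^A)) = f(\mathcal{C}_n^A)$, since $\CCW_n^A$ has even degree $2n$. It remains to show that for each $k>1$ the contribution
\[
\sum_{n_1+\cdots+n_{2k-1} \, = \, n+k-1} \Psi\Bigl(F_{2k-1}\bigl(\CCW_{n_1}^A,\dots,\CCW_{n_{2k-1}}^A\bigr)\Bigr)
\]
vanishes. This is where property~(ii) of Definition~\ref{cwmorph} enters: since $2k-1$ is an odd integer greater than $1$ and each $\CCW_{n_i}^A$ has even degree, the symmetrized sum $\sum_{\sigma\in\textbf{S}_{2k-1}}\Psi(F_{2k-1}(\CCW_{n_{\sigma(1)}}^A,\dots,\CCW_{n_{\sigma(2k-1)}}^A))$ is zero for every fixed tuple $(n_1,\dots,n_{2k-1})$.

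The only subtle point, and the one I would handle with care, is that the sum over compositions $n_1+\cdots+n_{2k-1}=n+k-1$ is a sum over ordered tuples rather than a manifestly $\textbf{S}_{2k-1}$-invariant symmetrization. To deal with this I would group the tuples by their underlying multiset: for a multiset with multiplicities $m_1,m_2,\dots$, the number of orderings equals $(2k-1)!/\prod_j m_j!$, so the sum over tuples may be rewritten as
\[
\sum_{\text{multisets}} \, \frac{1}{\prod_j m_j!} \sum_{\sigma\in\textbf{S}_{2k-1}} \Psi\Bigl(F_{2k-1}\bigl(\CCW_{n_{\sigma(1)}}^A,\dots,\CCW_{n_{\sigma(2k-1)}}^A\bigr)\Bigr),
\]
and the inner symmetrized sum vanishes by property~(ii). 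Combining the $k=1$ contribution with the vanishing of all $k>1$ contributions yields $\mathcal{C}_n^B = f(\mathcal{C}_n^A)$, as required. The main conceptual obstacle is precisely this multiset bookkeeping; once the ordered sum is recognized as a weighted symmetrization, the result follows immediately from the Chern-Weil triple axioms.
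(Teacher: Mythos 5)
Your proof is correct and follows essentially the same route as the paper: both reduce the claim to Lemma~\ref{LM-X} (Eq.~(\ref{evenF-1})), handle the $F_1$ term via condition~(i) of Definition~\ref{cwmorph}, and kill the $k>1$ terms via condition~(ii). The multiset/symmetrization bookkeeping you carry out explicitly is exactly the step the paper leaves implicit when invoking condition~(ii), and it is handled correctly (the field has characteristic zero, so dividing by the multiplicities is harmless).
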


\begin{proof}
In view of (\ref{ainfhom1})
it suffices to show that for every $n$  the expression
\[
\sum_{\TRE \, \in \, \CET(2n)}F_{1}(\mathcal{M}_{\TRE}^{A})-{\mathcal{M}_{\TRE}^{B}}
\, = \,
F_1 \bigl(\CCW_n^A\bigr)  - \CCW_n^B
\]
is in the kernel of $\Psi$.
But Eq.~(\ref{evenF-1}) gives us
\[
\CCW^B_n \, - \, F_1 \bigl(\CCW_n^A\bigr) \, = \,
\mathop{\sum}\limits_{k \, = \, 2}^{\infty}
\
\smtwo{n_1+\cdots+n_{2k-1}}{= n+k-1}
\,
F_{2k-1}
\left(\CCW_{n_1}^A,\dots,\CCW_{n_{2k-1}}^A\right) \,,
\]
and we conclude that the right hand side is indeed in the kernel of $\Psi$ according to condition (ii) in Definition~(\ref{cwmorph}).
\end{proof}

In what follows we will apply the above proposition only in the (trivial) case when $F$ is a strict morphism
of curved $A_{\infty}$-algebras, i.e. when $F_{k}$ vanishes for every $k>1$.


\subsection{Examples}
Let $M$ be a smooth manifold and let $\Omega^{\bullet}(M,\text{End}(E))$ be the curved dg algebra from Example \ref{ex1section1}. Then obviously $(\Omega^{\bullet}(M,\text{End}(E)),
\text{Tr}, \Omega^{\bullet}(M))$ (where $\text{Tr}$ denotes the matrix trace) is a Chern-Weil triple. The Chern character of this triple coincides with the Chern character of $E$.

Let $E$ be a vector bundle over a finite polyhedron $X$ and  let $\Omega^{\bullet}(X,\textbf{M}_{l}(\C))$ be the curved dg algebra from Example \ref{transaex} obtained from a
 piece-wise smooth idempotent $P$ representing $E$. We shall
now explain how one can extend the classical Chern-Weil theory to the piece-wise smooth setting. Indeed, one can apply the general construction of
\cite[Chapter 1]{Ka} to the dg algebra $\Omega^{\bullet}(X)$ to obtain a Chern character
$$\text{Ch}: K_{0}(\Omega^{0}(X)) \rightarrow H^{\bullet}_{dR}(X)$$
whose $k$-th component $\text{Ch}_{k}(P)$ is given by the usual formula
$\text(1/k!){Tr}(P(dP)^{k})$. Then one can define (see \cite{Ka}) Chern classes $c_{k}(P)$ that are related to the Chern character by the formula
$$\text{Ch}_{k}(P)=\frac{1}{k!}Q_{k}(c_{1}(P),\ldots,c_{k}(P)),$$
where $Q_{k}$ denotes the $k$-th elementary symmetric function.

Clearly $\text{Ch}$ is natural (with respect to piece-wise smooth maps) and coincides with the Chern character given by Chern-Weil theory when $P$ represents a smooth bundle over a smooth compact manifold. It follows that $c_{k}(P)$ satisfy the axioms for Chern classes stated in \cite[Chapter 1,\S 4]{Hi} and therefore coincide with the standard Chern classes defined for bundles over arbitrary compact space $X$. In other words, the
Chern character of  the Chern-Weil triple $(\Omega^{\bullet}(X,\textbf{M}_{l}(\C)),
\text{Tr}, \Omega^{\bullet}(X))$ is the Chern character of $E$.

It is not hard to see that  $(C^{\bullet}(X,\textbf{M}_{l}(\C)),\text{Tr},C^{\bullet}(X))$,  where
$C^{\bullet}(X,\textbf{M}_{l}(\C))$ is the curved $A_{\infty}$-algebra from Example \ref{transaex}, is a Chern-Weil triple. Indeed, condition (i) in Definition \ref{cwtriple} is easily checked using Proposition \ref{expform} and the fact that the trace vanishes on commutators. Condition (ii) directly follows from Lemma \ref{cyclsym} since we can write the sum over all permutations as a sum of subsums over cyclic permutations.

Similarly, the curved $A_{\infty}$-morphism from $C^{\bullet}(X,\textbf{M}_{l}(\C))$ to $\Omega^{\bullet}(X,\textbf{M}_{l}(\C))$ given by Proposition \ref{transa} induces a moprhism of Chern-Weil triples. This morphism however does not satisfy
the assumptions of Proposition \ref{naturality}.  Thus, we choose a different route to show that the Chern character of the Chern-Weil triple defined by the curved $A_{\infty}$-algebra $C^{\bullet}(X,\textbf{M}_{l}(\C))$ coincide with the Chern character of the bundle $E$.

\begin{lemma}\label{strict} Let $X$ be locally finite polyhedron and let $C^{\bullet}(X)$ be the $A_{\infty}$-algebra defined after Theorem \ref{contraction}. There exists a strict quasi-isomorphism of $A_{\infty}$-algebras from $C^{\bullet}(X)$ to $C^{\bullet}(\widetilde{X})$ for every sufficiently fine subdivision $\widetilde{X}$ of $X$.
\end{lemma}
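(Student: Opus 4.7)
\textit{Proof plan.} The natural candidate for the strict morphism is the cochain subdivision map
\[
s \, = \, R_{\widetilde{X}} \circ j \circ W_X \, : \, C^{\bullet}(X) \, \longrightarrow \, C^{\bullet}(\widetilde{X}),
\]
where $j \colon \Omega^{\bullet}(X) \hookrightarrow \Omega^{\bullet}(\widetilde{X})$ is the natural inclusion of piece-wise smooth forms, well defined because every simplex of $\widetilde{X}$ is contained in a simplex of $X$, so that restriction of smooth forms remains smooth. Equivalently, $s(c)(\tilde{\sigma}) = \int_{\tilde{\sigma}} W_X(c)$. That $s$ is a chain quasi-isomorphism is classical: it is a chain map because $R_{\widetilde{X}}$ and $W_X$ are chain maps and $j$ intertwines exterior differentials, and it induces the canonical identification of simplicial cohomologies under subdivision.

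The non-trivial content is to verify strictness. My approach is to use the explicit tree formula of Proposition~\ref{expform} in the non-curved case ($\gamma = 0$): for $n > 1$, $m_n$ is a signed sum over completely binary planar rooted trees $\tau \in \Upsilon_n$, where each tree contributes $R$ at its root, $W$ at its tails, wedge product at each internal vertex, and the Dupont homotopy $H$ on each internal edge. The geometric key is the naturality identity
\[
W_{\widetilde{X}} \circ s \, = \, j \circ W_X,
\]
which holds because $W_X(c)$ is piece-wise linear with respect to $X$, hence piece-wise linear with respect to $\widetilde{X}$, and therefore lies in the image of $W_{\widetilde{X}}$. Since $j$ is a dg algebra map (so wedge products match) and since the tails of each tree are populated by Whitney interpolations, comparing $s \circ m^X_n$ with $m^{\widetilde{X}}_n \circ s^{\otimes n}$ tree by tree reduces the strictness assertion to the compatibility of the Dupont homotopies along the internal edges.

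The hard part is precisely this compatibility. The Dupont operator is defined simplex-locally by explicit barycentric coordinate formulas and, in the naive sense, $H_X$ and $H_{\widetilde{X}}$ are \emph{not} intertwined by $j$. The hypothesis that $\widetilde{X}$ be sufficiently fine enters here: the plan is to establish, by induction on the dimension of simplices and careful analysis of Dupont's recursive formulas in terms of the refined barycentric coordinates, the intertwining
\[
j \circ H_X \, = \, H_{\widetilde{X}} \circ j
\]
on the (finitely many) piece-wise polynomial forms that actually arise in the tree sums. Once this intertwining is proved, direct substitution into the tree formula together with the preservation of wedge product and Whitney interpolation under $j$ delivers $s \circ m^X_n = m^{\widetilde{X}}_n \circ s^{\otimes n}$ for every $n \geqslant 1$, yielding the desired strict quasi-isomorphism.
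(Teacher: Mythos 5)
Your plan stalls exactly at the point you yourself flag as ``the hard part'', and that gap is not closable: the intertwining $j \circ H_X = H_{\widetilde{X}} \circ j$ for the Dupont homotopies is not merely unproved in your sketch, it is false, and no amount of fineness of $\widetilde{X}$ can repair it. Fineness is a metric condition, while $j H_X = H_{\widetilde{X}} j$ is an exact operator identity; subdividing more finely does not change the local combinatorial discrepancy between the two homotopies. Dupont's operator is natural with respect to \emph{simplicial maps} (pullback along a simplicial map commutes with $H$), but the restriction map $j$ from forms on $X$ to forms on $\widetilde{X}$ is not the pullback of a simplicial map, and already on wedge products of Whitney forms of $X$ (the very inputs fed to the internal edges of your trees) the two sides are computed from different systems of barycentric coordinates and do not agree. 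Restricting attention to ``the finitely many piece-wise polynomial forms that actually arise in the tree sums'' does not help: those forms span large spaces on which the identity has no reason to hold, and ``induction on dimension with careful analysis of the recursive formulas'' is a hope, not an argument. Consequently your candidate map $s=R_{\widetilde{X}}\circ j\circ W_X$ need not strictly intertwine even $m_2$, let alone all higher products, and the proposal does not establish the lemma. (Your auxiliary claim $W_{\widetilde{X}}\circ s = j\circ W_X$, i.e.\ that restricted Whitney forms of $X$ are Whitney forms of $\widetilde{X}$, is correct but would also need justification.)

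The paper's proof sidesteps precisely this obstruction, and this is where ``sufficiently fine'' genuinely enters: by the simplicial approximation theorem there is a simplicial map $\varphi:\widetilde{X}\rightarrow X$ approximating the identity, its pullback $\varphi^{*}$ is a strict dg algebra morphism $\Omega^{\bullet}(X)\rightarrow\Omega^{\bullet}(\widetilde{X})$, and Dupont's homotopy \emph{is} natural for such pullbacks, so $\varphi^{*}H_X=H_{\widetilde{X}}\varphi^{*}$ holds exactly. One then does not verify strictness tree by tree at all; instead one composes the transferred $A_{\infty}$-morphisms $\widetilde{W}_X$ and $\widetilde{R}_{\widetilde{X}}$ from Proposition~\ref{transa} with $\varphi^{*}$ and invokes Lemma~\ref{strictmor}, whose proof kills all higher components of the composite using $\varphi^{*}H_X=H_{\widetilde{X}}\varphi^{*}$ together with the annihilation conditions of the special contractions. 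If you want to salvage your approach, you would have to replace $j$ by $\varphi^{*}$ (so your map becomes $R_{\widetilde{X}}\circ\varphi^{*}\circ W_X$) and argue via the composition of transferred morphisms rather than via a direct tree-by-tree identity for the subdivision map.
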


\begin{proof} If  $\widetilde{X}$ is a fine enough subdivision of $X$ then there exists a simplicial map
$\varphi : \widetilde{X} \rightarrow X $ which can be obtained by applying the simplicial approximation
theorem to the identity map on $X$. The map $\varphi$ clearly induces a dg algebra quasi-isomorphism \[\varphi ^{*}: \Omega^{\bullet}(X)
\rightarrow \Omega^{\bullet}(\widetilde{X}).\]
It follows directly from the definition of the homotopy $H$ given in \cite[Chapter 2]{Du} that it commutes with simplicial maps. In particular, we have
\[\varphi ^{*}  H_{X}= H_{\widetilde{X}}  \varphi ^{*}\]

Let $\widetilde{W}_{X}:C^{\bullet}(X) \rightarrow \Omega ^{\bullet}(X)$ and $\widetilde{R}_{\widetilde{X}}:
\Omega ^{\bullet}(\widetilde{X}) \rightarrow C^{\bullet}(\widetilde{X})$
 be the $A_{\infty}$-morphisms given by Proposition \ref{transa}. Then Lemma \ref{strictmor} implies that the composition
 $ \widetilde{R}_{\widetilde{X}}  \varphi ^{*}  \widetilde{W}_{X}$
   is a strict morphism of $A_{\infty}$-algebras.
\end{proof}

\begin{theorem} The images under the De Rham isomorphism of the Chern character of the Chern-Weil triple $(C^{\bullet}(X,\textbf{M}_{l}(\C)),\text{Tr},C^{\bullet}(X))$ coincide with the Chern character of $E$.
\end{theorem}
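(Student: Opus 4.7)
The plan is to reduce the claim to the forms-side Chern--Weil triple $(\Omega^{\bullet}(X,\Mbf{M}_{l}(\C)),\text{Tr},\Omega^{\bullet}(X))$, whose Chern character was already identified with $\textbf{Ch}(E)$ in the paragraphs preceding the statement. The natural attempt is to apply Proposition~\ref{naturality} to the transfer morphism $I\colon C^{\bullet}(X,\Mbf{M}_{l}(\C))\to\Omega^{\bullet}(X,\Mbf{M}_{l}(\C))$ of Proposition~\ref{transa}. As remarked just before Lemma~\ref{strict}, the pair $(I,f)$ (with $f$ the scalar counterpart of $I_{1}$) does form a morphism of Chern--Weil triples: condition~(ii) of Definition~\ref{cwmorph} is the last assertion of Lemma~\ref{cyclsym}, and condition~(i) follows from the first two components of Proposition~\ref{expform} together with the vanishing of $\text{Tr}$ on commutators. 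However, the even-arity components $I_{2k}$ of the transfer morphism are generically nonzero, so the hypothesis of Proposition~\ref{naturality} fails and a different route is required.

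The idea suggested by Lemma~\ref{strict} is to use Lemma~\ref{strictmor} to replace $I$ by a strict morphism between two cochain triples. Let $\widetilde{X}$ be a further subdivision of $X$ sufficiently fine that the hypotheses of Example~\ref{transaex} are satisfied on it, and let $\varphi\colon\widetilde{X}\to X$ be a simplicial approximation of the identity, as used in Lemma~\ref{strict}. Take $A_{1}=\Omega^{\bullet}(X,\Mbf{M}_{l}(\C))$, $A_{2}=\Omega^{\bullet}(\widetilde{X},\Mbf{M}_{l}(\C))$, $\gamma_{1}=PdP$, $\gamma_{2}=\varphi^{*}(PdP)$ and $\psi=\varphi^{*}$. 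Then $\varphi^{*}$ is a strict morphism of the corresponding curved dg algebras (the pullback preserves $d$, the wedge product, and the curvature), and the Dupont-homotopy commutation $\varphi^{*}H_{X}=H_{\widetilde{X}}\varphi^{*}$ used in the proof of Lemma~\ref{strict} supplies the remaining hypothesis of Lemma~\ref{strictmor}. The conclusion is a strict morphism $\Psi=P_{\widetilde{X}}\circ\varphi^{*}\circ I_{X}$ of curved $A_{\infty}$-algebras between the two curved cochain triples, and Proposition~\ref{naturality} then yields the equality $f_{\ast}[\mathcal{C}_{n}^{C_{X}}]=[\mathcal{C}_{n}^{C_{\widetilde{X}}}]$ in $H^{2n}(C^{\bullet}(\widetilde{X}))$, where $f$ is the scalar component of $\Psi$.

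To close the argument I would return to the morphism $I_{\widetilde{X}}$ of Chern--Weil triples (no longer asking for strictness) and exploit the general, non-strict generating-series identities from the proof of Lemma~\ref{LM-X} (Eqs.~(\ref{CCw1})--(\ref{ainmorph-3})), which express $\mbf{\CCW_{\Omega}}$ in terms of $\mbf{\CCW_{C}}$ and all components of $I_{\widetilde{X}}$ without any assumption on even components. Applying $\text{Tr}$ and using the cyclic-symmetry identities of Lemma~\ref{cyclsym} for the odd components of $I_{\widetilde{X}}$, together with the curved $A_{\infty}$-identities on the forms side (where $m_{k}^{\Omega}=0$ for $k>2$), the goal is to show that after tracing, the contributions of the higher components of $I_{\widetilde{X}}$ assemble into $d$-exact cochains, leaving in cohomology only the de Rham image of $\text{Tr}(\mbf{\CCW_{\Omega}})_{n}=\textbf{Ch}_{n}(E)$. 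The principal obstacle lies in precisely this last step: converting the formal generating-series identities into a bona fide cohomological equality requires careful bookkeeping of the alternating signs appearing in $\mbf{\CCw^{\pm}_{C,\Omega}}$ and of the constraints that the curved $A_{\infty}$-axioms impose on the even components $I_{2k}$, which are only controlled implicitly. A routine side point is that the matrix-valued extension of all the ingredients (contraction, pullback, trace) poses no difficulty since everything acts entry-wise.
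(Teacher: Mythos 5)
Your first two steps are exactly the paper's: the observation that the transfer morphism $I$ fails the hypothesis of Proposition~\ref{naturality}, and the use of the simplicial approximation $\varphi^{*}$ together with Lemma~\ref{strictmor} to produce a strict morphism between the curved cochain algebras of $X$ and of a subdivision, so that Proposition~\ref{naturality} gives the subdivision-invariance of the cochain Chern character (this is Eq.~(\ref{allsame}) in the paper). Up to that point the proposal is sound.

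The gap is your third step, and you flag it yourself: it is not an argument but a hope. The non-strict generating-series identities from the proof of Lemma~\ref{LM-X} involve all components of $I$, and in particular the even components $I_{2k}$, on which neither Lemma~\ref{cyclsym} (which controls only the odd components under cyclic sums of even-degree arguments) nor the Chern--Weil axioms give any handle; nothing in the paper, and nothing you propose, shows that the traced error terms are $d$-exact at a fixed subdivision. The paper closes the argument in a completely different, analytic way: it takes a whole sequence of subdivisions $X_{j}$ with fullness bounded below and mesh tending to zero, and invokes Wilson's convergence-under-refinement theorems \cite[Theorems 5.4 and 5.12]{W} to show that the transferred $m_{2}^{j}$ converges to the wedge product and the higher $m_{p}^{j}$ tend to zero in the $L_{2}$-norms $\|\cdot\|_{j}$, whence
$\lim_{j\rightarrow\infty}\|\overline{W}_{X_{j}}(\textbf{Ch}^{\bullet}_{C}(X_{j}))-\textbf{Ch}^{\bullet}_{\Omega}(X_{j})\|_{j}=0$.
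Combining this with the subdivision-invariance (\ref{allsame}), the uniform lower bound on the maps $\varphi_{j}^{*}$ (so that $(\varphi_{\Omega,j}^{*})^{-1}$ are uniformly bounded) and the compatibility $(\varphi_{\Omega,j}^{*})^{-1}\overline{W}_{X_{j}}=\overline{W}_{X}(\varphi_{C,j}^{*})^{-1}$ forces the fixed class $\overline{W}_{X}(\textbf{Ch}^{\bullet}_{C}(X))$ to equal $\textbf{Ch}^{\bullet}_{\Omega}(X)$, i.e. the Chern character of $E$. Without this limiting mechanism (or some genuinely new algebraic input controlling the $I_{2k}$), your outline does not reach the conclusion; a single fine subdivision and formal bookkeeping of the series in Lemma~\ref{LM-X} is not enough.
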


\begin{proof} Given a polyhedron $Y$ and $\gamma \in \Omega ^{1}(Y,\textbf{M}_{l}(\C))$, denote the curved dg algebra
obtained by deforming the  dg algebra $\Omega ^{\bullet}(Y,\textbf{M}_{l}(\C))$ along $\gamma$ by $\Omega ^{\bullet}(Y)^{\gamma}$.
Proceeding as in the proof of Lemma \ref{strict}, we see that  there exists a sequence of subdivisions $\{X_{j}\}_{j=1}^{\infty}$ of $X$ satisfying the following conditions:

(1) For every $j$ there exists a simplicial map $\varphi _{j}:X_{j} \rightarrow X$ that induces a morphism of curved dg algebras

\[\varphi_{j}^{*}: \Omega^{\bullet}(X)^{\gamma}
\rightarrow \Omega^{\bullet}(X_{j})^{\varphi_{j}^{*}(\gamma)}\]

and isomorphisms \[\varphi_{\Omega,j}^{*}:H ^{\bullet}_{dR}(X) \rightarrow H ^{\bullet}_{dR}(X_{j}),\]
\[\varphi_{C,j}^{*}:H ^{\bullet}_{simpl}(X) \rightarrow  H ^{\bullet}_{simpl}(X_{j})\] on De Rham and simplicial cohomology respectively, that commute with the De Rham isomorphisms.

(2) The fullness of $X_{j}$ is bounded from below and the mesh of $X_{j}$ tends to 0 as $j\rightarrow \infty$ (see \cite{Do} for the relevant definitions).

\medskip

Let $C^{\bullet}(X)^{\gamma}$ (respectively $C^{\bullet}(X_{j})^{\varphi_{j}^{*}(\gamma)}$) denote the curved $A_{\infty}$-algebras obtained by transferring the curved dg structure on  $\Omega^{\bullet}(X)^{\gamma}$ (respectively $\Omega^{\bullet}(X_{j})^{\varphi_{j}^{*}(\gamma)}$) as in Example \ref{transaex}. Then Lemma \ref{strictmor} implies that there exists a strict morphism of curved $A_{\infty}$-algebras
between  $C^{\bullet}(X)^{\gamma}$ and $C^{\bullet}(X_{j})^{\varphi_{j}^{*}(\gamma)}$
which clearly coincides with $\varphi_{C,j}^{*}$ at the cohomology level.

Let us denote the Chern characters of the Chern-Weil triples defined by the curved dg algebras $\Omega^{\bullet}(X)^{\gamma}$ and
$\Omega^{\bullet}(X_{j})^{\varphi_{j}^{*}(\gamma)}$ by $\textbf{Ch}^{\bullet}_{\Omega}(X)$ and $\textbf{Ch}^{\bullet}_{\Omega}(X_{j})$ respectively, and the Chern characters of the Chern-Weil triples defined by the curved $A_{\infty}$-algebras $C^{\bullet}(X)^{\gamma}$ and
$C^{\bullet}(X_{j})^{\varphi_{j}^{*}(\gamma)}$ by $\textbf{Ch}^{\bullet}_{C}(X)$ and $\textbf{Ch}^{\bullet}_{C}(X_{j})$ respectively. Then by naturality of the Chern character under strict $A_{\infty}$-morphisms we have
\be{allsame} \overline{W}_{X}((\varphi_{C,j}^{*})^{-1}(\textbf{Ch}^{\bullet}_{C}(X_{j})))=\overline{W}_{X}(\textbf{Ch}^{\bullet}_{C}(X))
\ee
for every $j$, where  $\overline{W}_{X}$  denotes the De Rham isomophism induced by the Whitney map $W_{X}$.

Now using condition (2) above one can show exactly as in \cite[Theorems 5.4 and 5.12]{W} that for all $p>2$ and all forms $\omega _{1}, \ldots, \omega _{p}$ in $\Omega^{\bullet}(X, \textbf{M}_{l}(\C))$ one has
\[\lim _{j \rightarrow \infty}\|W_{X_{j}}(m_{2}^{j}(R_{X_{j}}(r_{j}(\omega _{1})),R_{X_{j}}(r_{j}(\omega _{2})))-\omega _{1} \wedge \omega _{2}\|_{j}=0,\]
\[\lim _{j \rightarrow \infty}\|W_{X_{j}}(m_{p}^{j}(R_{X_{j}}(r_{j}(\omega _{1})),\ldots , R_{X_{j}}(r_{j}(\omega _{p})))\|_{j}=0,\]
where $ \{m_{p}^{j}\}$ is the curved $A_{\infty}$-structure on $C^{\bullet}(X_{j})^{\varphi_{j}^{*}(\gamma)}$, the map \[r_{j}:\Omega^{\bullet}(X, \textbf{M}_{l}(\C)) \rightarrow \Omega^{\bullet}(X_{j}, \textbf{M}_{l}(\C))\] is the
natural restriction of forms, and $\|.\|_{j}$ is the $L_{2}$-norm on $\Omega^{\bullet}(X_{j}, \textbf{M}_{l}(\C))$. Using this, it is not hard to show that
\be{interm}\lim _{j \rightarrow \infty} \|\overline{W}_{X_{j}}(\textbf{Ch}^{\bullet}_{C}(X_{j}))-\textbf{Ch}^{\bullet}_{\Omega}(X_{j})\|_{j}=0.
\ee
One can easily verify that the maps $\varphi_{j}^{*}$ are uniformly bounded from below. It follows that $(\varphi_{\Omega,j}^{*})^{-1}$ are uniformly bounded from above, hence Eq. (\ref{interm}) implies that
\be{intermphi}\lim _{j \rightarrow \infty} \|(\varphi_{\Omega,j}^{*})^{-1}(\overline{W}_{X_{j}}(\textbf{Ch}^{\bullet}_{C}(X_{j}))
-\textbf{Ch}^{\bullet}_{\Omega}(X)\|_{j}=0.
\ee

Now since
\[(\varphi_{\Omega,j}^{*})^{-1}  \overline{W}_{X_{j}}=\overline{W}_{X}  (\varphi_{C,j}^{*})^{-1},\]
comparing Eqs. (\ref{allsame}) and (\ref{intermphi}) we conclude that $\overline{W}_{X}(\textbf{Ch}^{\bullet}_{C}(X))$ coincides with $\textbf{Ch}^{\bullet}_{\Omega}(X)$, the Chern character of $E$.
\end{proof}

Finally we note that Lemma \ref{secondcw} imples that the curved  $A_{\infty}$-algebra $C^{\bullet}(X,\textbf{M}_{l}(\textbf{k}))^{\gamma}$ from Proposition \ref{exadef} defines a Chern-Weil triple $(C^{\bullet}(X,\textbf{M}_{l}(\textbf{k}))^{\gamma},\text{Tr}, C^{\bullet}(X)) $.
However, since $C^{\bullet}(X,\textbf{M}_{l}(\textbf{k}))^{\gamma}$ is a deformation of an ordinary $A_{\infty}$-algebra, Proposition \ref{indcon} shows that one cannot obtain notrivial Chern character from this Chern-Weil triple.

\medskip

\noindent
{\bf Acknowledgment.}
N.N. acknowledges partial support from the program ``Quantum field theory on curved space-times and curved target-spaces'' of the Erwin Schrodinger Instutute for Mathematical Physics.

\medskip

\end{document}